\newtheorem{Theorem}{Theorem}[section]
\newtheorem{Lemma}[Theorem]{Lemma}
\newtheorem{Corollary}[Theorem]{Corollary}
\newtheorem{Proposition}[Theorem]{Proposition}
\newtheorem{Remark}[Theorem]{Remark}
\newtheorem{Question}[Theorem]{Question}
\newtheorem{Conj}[Theorem]{Conjecture}
\newtheorem{Conjecture}{Conjecture}
\def\qed{\ifhmode\textqed\fi
	\ifmmode\ifinner\quad\qedsymbol\else\dispqed\fi\fi}
\def\textqed{\unskip\nobreak\penalty50
	\hskip2em\hbox{}\nobreak\hfill\qedsymbol
	\parfillskip=0pt \finalhyphendemerits=0}
\def\dispqed{\rlap{\qquad\qedsymbol}}
\def\Ker{\textup{Ker}}
\def\ZZ{\mathbb{Z}}
\def\m{\mathfrak{m}}
\def\height{\textup{height}}
\def\Ass{\textup{Ass}}
\def\FF{\mathbb{F}}
\def\Tor{\textup{Tor}}
\def\depth{\textup{depth\,}}
\def\pd{\textup{proj\,dim}}
\def\supp{\textup{supp}}
\def\reg{\textup{reg}}
\def\v{\textup{v}}
\def\lex{\textup{lex}}
\def\rlex{\textup{rlex}}
\def\ini{\textup{in}}
\def\Im{\textup{Im}}
\def\Match{\textup{Match}}
\def\Spec{\textup{Spec}}
\def\bideg{\textup{bideg}}
\begin{document}
	
	\title{Symbolic powers of polymatroidal ideals}
	\author{Antonino Ficarra, Somayeh Moradi}
	
	\address{Antonino Ficarra, Departamento de Matem\'{a}tica, Escola de Ci\^{e}ncias e Tecnologia, Centro de Investiga\c{c}\~{a}o, Matem\'{a}tica e Aplica\c{c}\~{o}es, Instituto de Investiga\c{c}\~{a}o e Forma\c{c}\~{a}o Avan\c{c}ada, Universidade de \'{E}vora, Rua Rom\~{a}o Ramalho, 59, P--7000--671 \'{E}vora, Portugal}
	\email{antonino.ficarra@uevora.pt\,\,,\quad antficarra@unime.it}
	
	\address{Somayeh Moradi, Department of Mathematics, Faculty of Science, Ilam University, P.O.Box 69315-516, Ilam, Iran}
	\email{so.moradi@ilam.ac.ir}

	\subjclass[2020]{Primary 13D02, 13C05, 13A02; Secondary 05E40}
	\keywords{Symbolic power, componentwise linear, regularity, polymatroidal ideal}
	
	\begin{abstract}
		In this paper, we investigate the componentwise linearity and the Castelnuovo-Mumford regularity  of symbolic powers of polymatroidal ideals. For a polymatroidal ideal $I$, we conjecture that every symbolic power $I^{(k)}$ is componentwise linear and
		$$
		\text{reg}\,I^{(k)}=\text{reg}\,I^k
		$$
		for all $k \ge 1$. We prove that $\text{reg}\,I^{(k)}\ge\text{reg}\,I^k$ for all $k \ge 1$ when $I$ has no embedded associated primes, for instance if $I$ is a matroidal ideal. Moreover, we establish a criterion on the symbolic Rees algebra $\mathcal{R}_s(I)$ of a monomial ideal of minimal intersection type which guarantees that every symbolic power $I^{(k)}$ has linear quotients and, hence, is componentwise linear for all $k\ge1$. By applying our criterion to squarefree Veronese ideals and certain matching-matroidal ideals, we verify both conjectures for these families. We establish the Conforti-Cornu\'ejols conjecture for any matroidal ideal, and we show that a matroidal ideal is packed if and only if it is the product of monomial prime ideals with pairwise disjoint supports. Furthermore, we identify several classes of non-squarefree polymatroidal ideals for which the ordinary and symbolic powers coincide. Hence, we confirm our conjectures for transversal polymatroidal ideals and principal Borel ideals. Finally, we verify our conjectures for all polymatroidal ideals either generated in small degrees or in a small number of variables.
	\end{abstract}
	\dedicatory{Dedicated to the memory of Jürgen Herzog,\\ whose passion for mathematics continues to inspire}
	
	\maketitle
	\tableofcontents
	\section*{Introduction}
	
	The study of symbolic powers of ideals has long been a central theme in Commutative Algebra. For an ideal $I$ in a Noetherian ring $R$, the $k$th symbolic power of $I$ is defined as
	\[
	I^{(k)}\ =\ \bigcap_{P\in\Ass(I)}(I^kR_P\cap R),
	\]
	where $\Ass(I)$ is the set of associated prime ideals of $I$.
	
	Symbolic powers first appeared in the literature in Krull’s proof of his principal ideal theorem \cite{Krull}. Subsequent work by Zariski \cite{Zar} and Nagata \cite[page 143]{Nag} highlighted their geometric significance, where they showed that if $P$ is a prime ideal of a polynomial ring $S$ over a field $K$, then one has $P^{(k)}=\bigcap\m^k$, where the intersection runs through all maximal ideals of $S$ containing $P$. This result was later refined by Eisenbud and Hochster \cite{EH}. More recently, De Stefani, Grifo and Jeffries extended these ideas to smooth $\mathbb{Z}$-algebras using $p$-derivations \cite{DGJ}. Moreover, the question of whether the symbolic Rees algebra $\mathcal{R}_s(I)=\bigoplus_{k\ge0}I^{(k)}t^k$ is Noetherian was answered in the negative by Roberts \cite{Ro}, following counterexamples to Hilbert's 14th problem by Nagata \cite{Nag1}. The containment problem between ordinary and symbolic powers has since been a major line of inquiry (see, e.g., \cite{DDGHB} and the references therein).
	
	Our primary motivation to study symbolic powers comes from combinatorics and the theory of monomial ideals. Over the past two decades, there has been an extensive investigation of symbolic powers of monomial ideals, beginning with the seminal work of Herzog, Hibi and Trung \cite{HHT} (see also \cite{BC,CDGR,DHNT,FMR,GHOS,MNPTV,MTr,MVu,SF0,SF,F2} and the references therein).
	
	Let $S=K[x_1,\dots,x_n]$ be the polynomial ring over a field $K$. In \cite{HHT}, it is shown that for any monomial ideal $I\subset S$, the symbolic Rees algebra  $\mathcal{R}_s(I)$ is Noetherian  and consequently, the Castelnuovo-Mumford regularity $\reg\,I^{(k)}$ is a quasi-linear function for $k\gg0$. By contrast, the regularity of the ordinary powers $\reg\,I^k$ is eventually linear (see \cite{CHT, K}). In the context of edge ideals, a remarkable conjecture of Minh \cite{MNPTV} posits that $\reg\,I(G)^{(k)}=\reg\,I(G)^k$ for all $k\ge1$, where $I(G)$ is the edge ideal of a finite simple graph $G$. If true, this would imply that $\reg\,I(G)^{(k)}$ is eventually linear, rather than merely quasi-linear, a surprising and impactful result.
	
	Among monomial ideals, polymatroidal ideals hold a distinguished place. These ideals are the algebraic counterpart of discrete polymatroids, a multiset analogue of matroids \cite{W} introduced by Herzog and Hibi \cite{HHD}. A monomial ideal $I\subset S$ is called \emph{polymatroidal} if the exponent vectors of its minimal generators form the set of bases of a discrete polymatroid. These ideals enjoy many desirable properties: for example, the product of polymatroidal ideals is again polymatroidal, they have linear resolutions, and consequently, all their powers have linear resolutions. Moreover, Bandari and Rahmati-Asghar \cite[Theorem 2.4]{BR} characterized polymatroidal ideals as those equigenerated monomial ideals that have linear quotients with respect to the lexicographic order induced by any ordering of the variables.
	
	However, not much is known about the symbolic powers of polymatroidal ideals. In this paper, our main objective is to address the following Conjectures A and B.
	\begin{Conjecture}\label{ConjA}
		Let $I\subset S$ be a polymatroidal ideal. Then 
		\begin{equation}\label{eq:PolyMinh}
			\reg\, I^{(k)}=\reg\, I^k \quad \text{for all } k\ge1.
		\end{equation}
	\end{Conjecture}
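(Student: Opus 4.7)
The key first observation is that every power $I^k$ of a polymatroidal ideal $I$ is again polymatroidal, hence has a linear resolution. If $I$ is generated in a single degree $d$, then $\reg I^k = kd$, and the conjectured equality reduces to proving $\reg I^{(k)} = kd$ for every $k \ge 1$. I would establish this via separate upper and lower bounds, with componentwise linearity of $I^{(k)}$ as the main lever for the upper bound.

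For the upper bound $\reg I^{(k)} \le kd$, my plan is to prove two statements: (a) $I^{(k)}$ is componentwise linear; and (b) every minimal monomial generator of $I^{(k)}$ has degree at most $kd$. Together these would imply $\reg I^{(k)} = \max\{j : \beta_{0,j}(I^{(k)}) \neq 0\} \le kd$. Statement (b) should follow from a polymatroidal exchange argument: for any monomial $u \in I^{(k)}$ of degree strictly greater than $kd$, one should produce a proper divisor of $u$ still in $I^{(k)}$ by an exchange along bases of the underlying polymatroid, contradicting minimality. For (a), I would invoke the criterion on the symbolic Rees algebra $\mathcal{R}_s(I)$ being of minimal intersection type discussed later in the paper; concretely, in each degree $j \le kd$ one would order the minimal generators of the truncation $(I^{(k)})_{\langle j \rangle}$ lexicographically, then use the exchange property descended from the polymatroidal structure to show each colon ideal is generated by variables, yielding linear quotients.

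For the lower bound $\reg I^{(k)} \ge kd$, the paper already establishes this whenever $I$ has no embedded associated primes, which in particular covers all matroidal ideals. For non-squarefree polymatroidal $I$ potentially having embedded primes, I would try to reduce to the matroidal case via polarization: the polarization $I^{\mathrm{pol}}$ is matroidal with the same regularity as $I$, and one could ask whether $(I^{(k)})^{\mathrm{pol}} = (I^{\mathrm{pol}})^{(k)}$, or at least whether a regularity inequality between the two holds, so as to transfer the matroidal lower bound.

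I expect componentwise linearity in step (a) to be the main obstacle. Unlike $I^k$, which is equigenerated, $I^{(k)}$ generally has minimal generators in many different degrees, so linear quotients must be verified in each degree separately. Moreover, when $I$ has embedded primes the tidy intersection description $I^{(k)} = \bigcap_{P \in \Min(I)} P^k$ breaks down and the Rees-algebra criterion may not apply as stated; a bespoke combinatorial argument, tailored to polymatroidal ideals with embedded primes and capturing both the exchange property and the embedded-prime contributions, is likely required.
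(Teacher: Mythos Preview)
The statement is \emph{Conjecture~A}; the paper does not prove it in general. It establishes the conjecture only for specific families (polymatroidal ideals in degree two, squarefree Veronese ideals, transversal polymatroidal ideals, principal Borel ideals, etc.), and for those families the arguments are case-specific: either one shows $I^{(k)}=I^k$ directly (often via $\m\in\Ass(I)$), or one verifies the $x$-condition on $\mathcal{R}_s(I)$ together with a linear cover function to force linear quotients. There is no general proof in the paper for you to compare against.

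Your plan has genuine gaps at every step. Step (a) is precisely Conjecture~B, which the paper explicitly leaves open; you have not proposed a mechanism beyond ``order lexicographically and use exchange,'' but the exchange property concerns $\mathcal{G}(I)$, not $\mathcal{G}(I^{(k)})$, and the paper's own example $I_{4,3}^{(2)}$ shows that $(I^{(k)})_{\langle j\rangle}$ need not even be polymatroidal, so a naive lex-plus-exchange argument cannot work. Step (b), bounding $\omega(I^{(k)})\le kd$, is likewise open in general; the paper only proves the reverse inequality $\omega(I^{(k)})\ge kd$ under the hypothesis of no embedded primes, and equality is established only in the squarefree Veronese case via a detailed combinatorial count of indecomposable covers. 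Your proposed exchange argument for (b) does not engage with the actual structure of $I^{(k)}=\bigcap_P (I(P))^k$.

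The polarization reduction for the lower bound is incorrect on two counts. First, the polarization of a polymatroidal ideal is typically \emph{not} matroidal: already for $I=(x_1,x_2)^2$ one gets $I^{\mathrm{pol}}=(x_{11}x_{12},\,x_{11}x_{21},\,x_{21}x_{22})$, which fails the exchange property (try $u=x_{11}x_{12}$, $v=x_{21}x_{22}$). Second, polarization does not commute with symbolic (or ordinary) powers, so there is no reason to expect $(I^{(k)})^{\mathrm{pol}}=(I^{\mathrm{pol}})^{(k)}$ or even a useful regularity comparison between them. The lower bound for polymatroidal ideals with embedded primes remains genuinely unresolved in the paper.
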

	If true, this would imply that $\reg\,I^{(k)}$ is eventually linear. Based on extensive computational evidence, we expect that every graded component of $I^{(k)}$ has a linear resolution, i.e., $I^{(k)}$ is \emph{componentwise linear} \cite{HH99}. We thus further raise the following
	\begin{Conjecture}\label{ConjB}
		Let $I\subset S$ be a polymatroidal ideal. Then $I^{(k)}$ is componentwise linear for all $k\ge1$.
	\end{Conjecture}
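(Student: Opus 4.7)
The plan is to reduce Conjecture~B to proving that every symbolic power $I^{(k)}$ has linear quotients, since linear quotients implies componentwise linearity. By the Bandari--Rahmati-Asghar characterization, this property holds for $I$ itself and for its ordinary powers $I^k$, which remain polymatroidal and hence have linear resolutions. My goal is to extend the exchange and linear-quotients machinery to the symbolic setting by pursuing two complementary strategies.

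The first strategy is to identify natural families of polymatroidal ideals for which $I^{(k)}=I^k$ for every $k\ge1$; in such families the conjecture is immediate, since $I^k$ is polymatroidal and hence componentwise linear. Concretely, I would analyze the primary decomposition of $I^k$ via the methods of Herzog--Hibi--Trung, looking for combinatorial conditions on the minimal generators of $I$ that rule out embedded associated primes in all powers. The second strategy targets the case $I^k\subsetneq I^{(k)}$, typical in the matroidal (squarefree) situation. Here I would study the symbolic Rees algebra $\mathcal{R}_s(I)=\bigoplus_{k\ge0}I^{(k)}t^k$, whose Noetherianity is ensured by Herzog--Hibi--Trung, and try to formulate a combinatorial condition on its minimal generators---a \emph{minimal intersection type} property---that forces each symbolic power to admit a lex-compatible linear quotients ordering; testing the criterion on squarefree Veronese ideals and related classes should suggest the general form.

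The main obstacle will be the matroidal case in full generality. Unlike ordinary powers, $I^{(k)}$ is typically not equigenerated, so Bandari--Rahmati-Asghar cannot be invoked directly; linear quotients must be established on every graded strand $(I^{(k)})_{\langle d\rangle}$ separately. Moreover, the minimal generators of $\mathcal{R}_s(I)$ can appear in unpredictable degrees, without an obvious matroid structure to govern them. The crux will be to prove a symbolic analogue of the polymatroidal exchange property on the minimal generators of each strand, and to isolate a combinatorial condition on $\Ass(I)$ that makes this exchange work uniformly in both $k$ and $d$.
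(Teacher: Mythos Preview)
The statement you are attempting is Conjecture~B, and it is genuinely a conjecture: the paper does \emph{not} prove it in general, only for the special families collected in the final theorem (squarefree Veronese ideals, matching-matroidal ideals of Veronese type, transversal polymatroidal ideals, principal Borel ideals, and a few small cases). So there is no ``paper's own proof'' to compare against, and your proposal is not a proof either---it is a research plan.

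That said, your plan is well-aligned with what the paper actually accomplishes. Your first strategy (find families with $I^{(k)}=I^k$) is exactly what the paper does in Section~5: Lemma~\ref{Lem:el} shows that $\mathfrak{m}\in\Ass(I)$ forces $I^{(k)}=I^k$, and this handles non-squarefree degree-two polymatroidal ideals, transversal polymatroidal ideals, and principal Borel ideals. Your second strategy (study $\mathcal{R}_s(I)$ and find a criterion forcing linear quotients) is what the paper does in Section~2, where the relevant criterion is the \emph{$x$-condition} on the defining ideal of $\mathcal{R}_s(I)$ with respect to a specific monomial order (Theorem~\ref{x-condition}); this is then verified for squarefree Veronese ideals in Theorem~\ref{Veronese:x-condition}. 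One terminological slip: in the paper, \emph{minimal intersection type} refers to a monomial ideal that is an intersection of powers of monomial primes with no embedded primes, not to a property of the Rees algebra generators; the criterion you are groping for is the $x$-condition, and it further requires the ideal to have a \emph{linear cover function}.

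The genuine gap is the one you yourself name: the general matroidal case. Your proposal to prove a ``symbolic analogue of the polymatroidal exchange property'' on each graded strand is optimistic---the paper shows explicitly (page~\pageref{notcp}) that $(I_{4,3}^{(2)})_{\langle 6\rangle}$ is \emph{not} polymatroidal, so the strands of symbolic powers need not satisfy any exchange property, and Bandari--Rahmati-Asghar cannot be salvaged in this way. The $x$-condition route is more promising, but verifying it requires a detailed Gr\"obner-basis analysis of the defining ideal of $\mathcal{R}_s(I)$, and beyond the squarefree Veronese case no general structure is known. In short: your strategies are sound and recover the paper's partial results, but neither you nor the paper have the missing idea needed to close the conjecture.
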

	If, in addition, the final degrees of $I^{(k)}$ and $I^k$ coincide (i.e., $\omega(I^{(k)})=\omega(I^k)$), then Conjecture~\ref{ConjA} would follow from Conjecture~\ref{ConjB}.
	
	The paper proceeds as follows. In Section \ref{sec1} we provide general bounds for the regularity $\reg\,I^{(k)}$ of symbolic powers of a monomial ideal $I\subset S$. In Theorem \ref{x-con-reg} we prove that $\reg\,I^{(k)}\le\reg_x\mathcal{R}_s(I)+d(I,k)$ for all $k\ge1$, where $d(I,k)$ is defined in terms of the bidegrees of the minimal monomial generators of the symbolic Rees algebra $\mathcal{R}_s(I)=\bigoplus_{k\ge0}I^{(k)}t^k$ and $\reg_x\mathcal{R}_s(I)$ is the so-called $x$-regularity of $\mathcal{R}_s(I)$. In \cite[Proposition 2.1]{HV}, Herzog and Vladoiu proved that any polymatroidal ideal can be expressed as the intersection of powers of monomial prime ideals. A monomial ideal with this property is said to be of \textit{intersection type}. We say that $I$ is of \textit{minimal intersection type} if $I$ is of intersection type and does not have embedded associated prime ideals. In Theorem \ref{Thm:reg-kk}, we prove that $\omega(I^{(k)})\ge\omega(I)k$ for all $k\ge1$, provided that $I$ is of minimal intersection type. Hence, if $I$ is a polymatroidal ideal without embedded associated primes, for instance if $I$ is matroidal, we obtain the inequality $\reg\,I^{(k)}\ge\reg\,I^k$ in the equation (\ref{eq:PolyMinh}) proposed in Conjecture \ref{ConjA}. In Theorem \ref{Thm:reg-kk}(d) we prove that if $I$ is a monomial ideal of minimal intersection type with linear powers such that $\reg_x\mathcal{R}_s(I)=0$, and $d(I,k)=\omega(I)k$ for all $k\ge1$, then $\reg\,I^{(k)}=\reg\,I^k$ for all $k\ge1$. We provide interesting families of monomial ideals $I$ satisfying $d(I,k)=\omega(I)k$ for all $k$, see Proposition \ref{Prop:dIk} and Corollaries \ref{Cor:perfect} and \ref{Cor:Idn-dIk}. 
	
	In Section \ref{sec2}, we study the symbolic Rees algebra $\mathcal{R}_s(I)$ of a monomial ideal $I$ of minimal intersection type. This algebra can be interpreted as the vertex cover algebra of a weighted simplicial complex introduced by Herzog, Hibi and Trung \cite{HHT}. The generators of $\mathcal{R}_s(I)$ correspond to the indecomposable $k$-covers of the simplicial complex associated to $I$. In general understanding the indecomposable $k$-covers is very challenging. We say that $I$ has a {\em linear cover function}, when the size of any indecomposable $k$-cover is a linear function of $k$ for all $k$. Squarefree Veronese ideals are examples of ideals with linear cover function. If $I$ has a linear cover function, in Theorem \ref{x-condition} we provide a criterion in terms of the defining ideal of $\mathcal{R}_s(I)$ and a special monomial order, called the \textit{$x$-condition}, which guarantees that $I^{(k)}$ has linear quotients, and in particular is componentwise linear for all $k\ge1$.  The concept of \textit{$x$-condition} was first exploited in~\cite{HHZ} and later in~\cite{HHM} and~\cite{HM}, in order to study the linearity and componentwise linearity of the ordinary powers of (monomial) ideals.
	
	In Section \ref{sec3} we apply the machinery developed in the previous sections to the class of squarefree Veronese ideals. These ideals have linear cover functions. In Theorem \ref{Veronese:x-condition} we prove that for any squarefree Veronese ideal $I$, the symbolic Rees algebra $\mathcal{R}_s(I)$ satisfies the $x$-condition. This result together with Theorem \ref{x-condition} implies that Conjectures \ref{ConjA} and \ref{ConjB} hold in such a case, see Corollary \ref{Cor:Veronese}. In Theorem \ref{gendeg} we determine the generating degrees of each symbolic power of a squarefree Veronese ideal $I$. As a consequence, we recover \cite[Theorem 7.5]{BC} on the Waldschmidt constant of $I$. Despite the fact that $\mathcal{R}_s(I)$ satisfies the $x$-condition for a squarefree Veronese ideal $I$, understanding the relations of the defining ideal of $\mathcal{R}_s(I)$ is challenging. However, in Proposition \ref{Prop:ini3} we show that, with respect to the monomial order (\ref{orderT}), the initial ideal of the defining ideal of $\mathcal{R}_s(I)$ is generated in degree at most 3. 
	
	A monomial ideal $I\subset S$ is called \textit{componentwise polymatroidal} if $I_{\langle j\rangle}$ is polymatroidal for all $j$. By \cite[Theorem 3.1]{Fic0} componentwise polymatroidal ideals have linear quotients. Naively, one may expect that $I^{(k)}$ is componentwise polymatroidal if $I$ is polymatroidal. This is not the case in general as we show at page \pageref{notcp}. 
	
	Section \ref{sec4} is devoted to matching-matroidal ideals. A famous theorem of Edmonds and Fulkerson shows that a matroid is a matching matroid if and only if it is transversal, see \cite[Theorem 2 on page 248]{W}. In Theorem \ref{Thm:mm} we prove that an ideal $I$ is matching-matroidal if and only if it is the squarefree part of the product of some monomial prime ideals. For the subfamily of matching-matroidal ideals of Veronese type, we prove in Theorem \ref{Thm:mat-mat} that Conjectures \ref{ConjA} and \ref{ConjB} hold.
	
	In Section \ref{sec5} we address the problem of classifying the polymatroidal ideals $I\subset S$ whose ordinary and symbolic powers coincide, i.e., $I^{(k)}=I^k$ for all $k\ge1$. First we consider the squarefree case. That is, we consider matroidal ideals. A famous conjecture posed by Conforti and Cornu\'ejols \cite{CC} has been restated equivalently in algebraic terms by Gitler, Villarreal and others \cite{GRV,GVV} as follows.
	
	\begin{Conjecture}\label{Conj-CC}
		A squarefree monomial ideal $I\subset S$ satisfies $I^{(k)}=I^k$ for all $k\ge1$ if and only if $I$ is packed.
	\end{Conjecture}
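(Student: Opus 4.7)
The conjecture as stated is a deep open problem originally due to Conforti and Cornu\'ejols, so I would aim to settle only the matroidal case, which is the one the abstract already flags as tractable. The ``only if'' implication is classical and holds for any squarefree monomial ideal: if $I^{(k)}=I^k$ for every $k\ge 1$, then every $k$-cover polyhedron of the clutter associated to $I$ is integral, and standard LP-duality then forces the packing property. So the content of the conjecture lies in the converse direction.

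For matroidal $I$, my plan is to first establish a structural characterization: a matroidal ideal $I\subset S$ is packed if and only if $I = P_1\cdots P_r$ for monomial prime ideals $P_1,\dots,P_r$ with pairwise disjoint supports (equivalently, the underlying matroid is a direct sum of rank-one uniform matroids). Granting this, the conclusion is immediate. Since the $P_i$ have pairwise disjoint supports they are pairwise comaximal, hence on the one hand $I^k = P_1^k\cdots P_r^k = P_1^k\cap\cdots\cap P_r^k$, and on the other hand each $P_i^k$ is $P_i$-primary and $\Ass(I)=\{P_1,\dots,P_r\}$ consists of minimal primes only, so $I^{(k)}=\bigcap_i P_i^k$. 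Hence $I^{(k)}=I^k$ for every $k\ge 1$.

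It therefore suffices to prove the characterization. The ``if'' half follows from the computation above combined with the easy direction already mentioned. For the ``only if'' half, I would proceed by contrapositive: if the matroid $M$ underlying $I$ is \emph{not} a direct sum of rank-one uniform matroids, I would extract from $M$, via the exchange axiom, a minor isomorphic to a uniform matroid $U_{d,n}$ with $1\le d<n$, whose corresponding squarefree Veronese ideal is well known not to be packed (an explicit fractional cover witnesses this). The task is then to check that this non-packing obstruction pulls back to $I$ through restriction/contraction on the matroid side and the associated variable specialization on the algebra side.

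The main obstacle will be exactly this last step: locating the right matroid minor inside an arbitrary ``non-product'' matroid and verifying that the fractional-cover witness of non-packedness for the minor lifts to a genuine witness for $I$ itself without being destroyed by the surrounding variables or by the enlargement from the minor to the ambient matroid. The general squarefree case of the conjecture is of course considerably harder, since packed clutters outside the matroidal realm need not admit any product-of-primes decomposition, and genuinely new combinatorial invariants would have to be brought into play.
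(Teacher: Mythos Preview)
Your strategy is correct and would succeed, with two corrections. First, the range $1\le d<n$ is off: $U_{1,n}$ corresponds to the monomial prime $(x_1,\dots,x_n)$, which \emph{is} packed. The minor you actually want is $U_{2,3}$, and it is always present once $M$ is not a direct sum of rank-one uniform matroids. Indeed, such an $M$ (being loopless) must have a circuit $C$ with $|C|\ge 3$; contracting $|C|-3$ elements of $C$ and restricting to the remaining three yields $U_{2,3}$. Its ideal $I_{3,2}=(x_1x_2,x_1x_3,x_2x_3)$ has height $2$ but admits no monomial regular sequence of length $2$, so it is not K\"onig---no fractional cover is needed. Second, the step you flag as the main obstacle---lifting the failure of packing from the minor back to $I$---is in fact automatic rather than delicate: deletion and contraction in $M$ correspond exactly to setting variables to $0$ and to $1$ in $I$, and ``packed'' is \emph{defined} by requiring every such specialization to be K\"onig.

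This route is genuinely different from the paper's. The paper proves the implication (packed $\Rightarrow$ product of primes with disjoint supports) by a double induction on $\alpha(I)$ and $\mu(I)$, using the splitting $I=x_iI_1+I_2$ into smaller matroidal ideals; it shows that $I_1$ and $I_2$ are again packed, applies the inductive hypothesis to each, and then carries out a careful matching of their prime factors, ultimately reducing to a degree-two situation where the classification of matroidal edge ideals as complete multipartite combines with the known edge-ideal case of Conforti--Cornu\'ejols to force bipartiteness. Your minor-based argument is shorter and more structural, resting only on the elementary circuit-contraction fact above and the single explicit computation for $I_{3,2}$; the paper's argument stays entirely inside monomial-ideal language and does not import matroid-minor machinery, at the price of a longer case analysis and reliance on the external edge-ideal result.
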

	
	For the definition of packed ideals see Section \ref{sec5}. Conjecture \ref{Conj-CC} is widely open. It is known that it holds true for any edge ideal \cite{GRV,GVV}. In Theorem \ref{Thm:I(k)k-sq} we establish Conjecture \ref{Conj-CC} for all matroidal ideals. We prove that a matroidal ideal $I$ is packed if and only if $I$ is the product of monomial prime ideals with pairwise disjoint supports.
	
	Then, we turn to the problem of classifying non-squarefree polymatroidal ideals $I$ whose ordinary and symbolic powers coincide. In Lemma \ref{Lem:el} 
	we show that $I^{(k)}=I^k$ for all $k$, provided that $\m=(x_1,\dots,x_n)\in\Ass(I)$. Using this elementary result, we are able to construct in a simple fashion several polymatroidal ideals whose ordinary and symbolic powers coincide, see Proposition \ref{Prop:new-k-k}. In Theorem \ref{Thm:k(k)}, we prove that the following families of polymatroidal ideals satisfy $I^{(k)}=I^k$ for all $k\ge1$:
	\begin{enumerate}
		\item[(a)] Polymatroidal ideals generated in degree $2$ which are not squarefree.
		\item[(b)] Transversal polymatroidal ideals.
		\item[(c)] Principal Borel ideals.
	\end{enumerate}
	
	In Section \ref{sec6} utilizing the results in the paper, we present several families of polymatroidal ideals for which we are able to verify Conjectures \ref{ConjA} and \ref{ConjB}. Using results from \cite{BR,FMR} we prove in Proposition \ref{Prop:I(G)-reg} that Conjectures \ref{ConjA} and \ref{ConjB} hold for any polymatroidal ideal generated in degree 2. In this vein, we establish these conjectures for all polymatroidal ideals in up to 3 variables (Proposition \ref{Prop:three}), and for all matroidal ideals in at most 4 variables (Proposition \ref{Prop:four}). 
	
	In view of the results in this paper, \cite[Conjecture B]{FMR} and the results in \cite{FMR}, and \cite[Theorem 3.6 and Corollary 3.7]{SF0} we are tempted to ask the following\medskip
	
	\noindent\textbf{Question D.}\,\, \textit{Let $I\subset S$ be an equigenerated monomial ideal whose all powers have linear resolution $($or linear quotients$)$. Is it true that $\reg\,I^{(k)}=\reg\,I^k$ and $I^{(k)}$ is componentwise linear, $($or has linear quotients$)$, for all $k\ge1$ ?}\medskip
	
	There are nice families of monomial ideals $I$ for which the $x$-regularity $\reg_x\mathcal{R}(I)$ of the ordinary Rees algebra $\mathcal{R}(I)=\bigoplus_{k\ge0}I^kt^k$ is zero, for instance if $I$ is an edge ideal with linear resolution, see \cite{HHZ}. Therefore, in view of Theorem \ref{Thm:reg-kk}(d), we are led to speculate whether in general we have $\reg_x\mathcal{R}_s(I)\le\reg_x\mathcal{R}(I)$, or to determine under which conditions this inequality holds.
	
	\section{General bounds for the regularity of symbolic powers}\label{sec1}
	
	In this section, we bound the regularity of symbolic powers of monomial ideals, and prove the inequality $\reg\,I^{(k)}\ge\reg\,I^k$ for any polymatroidal ideal $I$ without embedded associated primes.
	
	Throughout this paper $S=K[x_1,\dots,x_n]$ is the standard graded polynomial ring over a field $K$ and $\m=(x_1,\dots,x_n)$ is the unique graded maximal ideal of $S$. For a graded ideal $I\subset S$, we denote by $\alpha(I)=\min\{d:\ (I/\m I)_d\ne0\}$ the \textit{initial degree} of $I$ and by $\omega(I)=\max\{d:\ (I/\m I)_d\ne0\}$ the \textit{final degree} of $I$.
	
	Firstly, we provide a general upper bound for the regularity $\reg\,I^{(k)}$ of symbolic powers of a monomial ideal.
	
	Let $I\subset S$ be a monomial ideal. By \cite[Theorem 3.2]{HHT}, the symbolic Rees algebra $\mathcal{R}_s(I)=\bigoplus_{k\ge0}I^{(k)}t^k$ is a toric ring with minimal monomial generators $x_1,\dots,x_n,u_1t^{q_1},\dots,u_mt^{q_m}$, with $u_i\in S$ monomials and $q_i>0$, for all $i=1,\dots,m$.
	
	Let $T=K[x_1,\dots,x_n,y_1,\dots,y_m]$ be the bigraded polynomial ring over $K$ with the bigrading given by $\bideg(x_i)=(1,0)$ for $i=1,\dots,n$ and $\bideg(y_j)=(\deg(u_j),q_j)$ for $j=1,\dots,m$. Then $\mathcal{R}_s(I)\cong T/\Ker\,\varphi$, where $\varphi:T\rightarrow\mathcal{R}_s(I)$ is the $K$-algebra homomorphism defined by setting $\varphi(x_i)=x_i$ and $\varphi(y_j)=u_jt^{q_j}$ for all $i=1,\dots,n$ and $j=1,\dots,m$. Notice that $\mathcal{R}_s(I)$ is a finitely generated bigraded $T$-module. Let
	\begin{equation}\label{eq:F}
		\FF:0\rightarrow F_p\rightarrow\cdots\rightarrow F_1\rightarrow F_0\rightarrow\mathcal{R}_s(I)\rightarrow0    
	\end{equation}
	be the minimal bigraded $T$-resolution of $\mathcal{R}_s(I)$. We have $F_i=\bigoplus_j T(-a_{ij},-b_{ij})$ for all $i=0,\dots,p$. We define the \textit{$x$-regularity} of $\mathcal{R}_s(I)$ as the integer
	$$
	\reg_x\mathcal{R}_s(I)\ =\ \max_{i,j}\{a_{ij}-i\}.
	$$
	
	\begin{Theorem}\label{x-con-reg}
		We keep the notation as above. For all $k\ge1$, set
		$$
		d(I,k)=\max_{\substack{{\bf c}=(c_1,\dots,c_m)\in\ZZ_{\ge0}^m\\ c_1q_1+\dots+c_mq_m\le k}}\{\sum_{s=1}^m c_s\deg(u_s)\}.
		$$
		Then, for all $k\ge1$ we have $$\reg\,I^{(k)}\le \reg_x\mathcal{R}_s(I)+d(I,k).$$
	\end{Theorem}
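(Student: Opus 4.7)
The plan is to realize $I^{(k)}$ as the $t$-degree-$k$ strand of the bigraded $T$-module $\mathcal{R}_s(I)$, and then read off a (generally non-minimal) free $S$-resolution of $I^{(k)}$ directly from the minimal bigraded $T$-resolution $\FF$ in \eqref{eq:F}. Since taking the $t$-graded strand of a bigraded $T$-module is exact — morphisms of bigraded modules split strand-wise — applying it to $\FF$ produces an exact complex of graded $S$-modules
$$
0 \to (F_p)_{(*,k)} \to \cdots \to (F_0)_{(*,k)} \to (\mathcal{R}_s(I))_{(*,k)} = I^{(k)}\,t^k \to 0,
$$
whose terms will be free over $S$ once the strands of the twists $T(-a,-b)$ are identified.

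For the strand computation, a monomial $\mathbf{x}^{\mathbf{p}}\mathbf{y}^{\mathbf{c}}\in T$ has bidegree $\bigl(|\mathbf{p}|+\sum_{s=1}^m c_s\deg(u_s),\,\sum_{s=1}^m c_s q_s\bigr)$, so
$$
\bigl(T(-a,-b)\bigr)_{(*,k)}\ =\ \bigoplus_{\substack{\mathbf{c}\in\ZZ_{\ge0}^m\\ \sum_s c_s q_s\,=\,k-b}} S\Bigl(-a-\sum_{s=1}^m c_s\deg(u_s)\Bigr),
$$
a finite direct sum of free rank-one $S$-modules (empty when $k<b$, finite since each $q_s\ge 1$). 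Hence $(F_i)_{(*,k)}$ is a free $S$-module whose twists are exactly the integers $-a_{ij}-\sum_{s=1}^m c_s\deg(u_s)$ as $j$ ranges over the summands of $F_i$ and $\mathbf{c}$ over the nonnegative integer solutions of $\sum_{s=1}^m c_s q_s=k-b_{ij}$.

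The regularity of a finitely generated graded $S$-module is bounded above by the maximum of (shift minus homological degree) in any graded free resolution, minimal or not, so the above yields
$$
\reg\,I^{(k)}\ \le\ \max_{i,j,\mathbf{c}}\Bigl\{(a_{ij}-i)+\sum_{s=1}^m c_s\deg(u_s)\ :\ \sum_{s=1}^m c_s q_s=k-b_{ij}\Bigr\}.
$$
For every admissible triple, the constraint $\sum_s c_sq_s=k-b_{ij}\le k$ forces $\sum_s c_s\deg(u_s)\le d(I,k)$ by definition of $d(I,k)$, while $a_{ij}-i\le\reg_x\mathcal{R}_s(I)$ by definition of the $x$-regularity; adding the two estimates gives the theorem. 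The only conceptual input is the strand exactness at the start; everything else is bookkeeping with bigraded twists, so no serious obstacle is anticipated.
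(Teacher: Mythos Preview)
Your proposal is correct and follows essentially the same approach as the paper: both take the $(*,k)$-strand of the minimal bigraded $T$-resolution of $\mathcal{R}_s(I)$, identify each strand $(T(-a_{ij},-b_{ij}))_{(*,k)}$ as a direct sum of shifted copies of $S$ indexed by $\mathbf{c}$ with $\sum_s c_sq_s=k-b_{ij}$, and read off the regularity bound. Your write-up is in fact slightly more explicit than the paper's about why the strand is free and why a non-minimal resolution suffices for the regularity estimate, but the argument is the same.
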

	\begin{proof}
		For a bigraded $T$-module $M=\bigoplus_{d,k\ge0}M_{(d,k)}$ we put $M_{(*,k)}=\bigoplus_{d\ge0}M_{(d,k)}$. Since $\mathcal{R}_s(I)_{(*,k)}\cong I^{(k)}$, the complex (\ref{eq:F}) induces an exact complex
		\begin{equation}\label{eq:Fk}
			\FF_k:0\rightarrow (F_p)_{(*,k)}\rightarrow\cdots\rightarrow (F_1)_{(*,k)}\rightarrow (F_0)_{(*,k)}\rightarrow I^{(k)}\rightarrow0
		\end{equation}
		which is a free $S$-resolution of $I^{(k)}$. Next, since $\bideg(y_s)=(\deg(u_s),q_s)$ for all $s$, we have the isomorphism of free $S$-modules
		\begin{equation}\label{eq:Fc}
			(F_i)_{(*,k)}\cong\bigoplus_j\bigoplus_{\substack{{\bf c}=(c_1,\dots,c_m)\in\ZZ_{\ge0}^m\\ c_1q_1+\dots+c_mq_m=k-b_{ij}}}S(-a_{ij}){\bf y^c},
		\end{equation}
		where ${\bf y^c}=y_1^{c_1}\cdots y_m^{c_m}$. After shifting in (\ref{eq:Fc}) each ${\bf y^c}$ by its first bidegree component, it follows that (\ref{eq:Fk}) is a graded free $S$-resolution of $I^{(k)}$. This yields at once that
		$$
		\reg\,I^{(k)}\le\max_{i,j}\{a_{ij}-i\}+\max_{\substack{{\bf c}=(c_1,\dots,c_m)\in\ZZ_{\ge0}^m\\ c_1q_1+\dots+c_mq_m\le k}}\{\sum_{s=1}^m c_s\deg(u_s)\}=\reg_x\mathcal{R}_s(I)+d(I,k),
		$$
		as desired.
	\end{proof}
	
	Given a monomial ideal $I\subset S$ we denote by $\mathcal{G}(I)$ the unique set of minimal monomial generators of $I$. Moreover,  we put $[n]=\{1,\dots,n\}$ for an integer $n\ge1$. For a non-empty subset $F$ of $[n]$ we put $P_F=(x_i:\ i\in F)$.
	
	Let $I\subset S$ be a monomial ideal and let $P\in\Spec(S)$ be a monomial prime ideal. The \textit{monomial localization} of $I$ with respect to $P$ is the monomial ideal $I(P)$ of the polynomial ring $S(P)=K[x_i:\ x_i\in P]$ obtained by applying the substitutions $x_i\mapsto 1$ for all $x_i\notin P$. Since $I^kS_P\cap S=(I(P))^kS_P\cap S=(I(P))^k$, it follows that
	\begin{equation}\label{eq:symbolicP'}
		I^{(k)}\ =\ \bigcap_{P\in\Ass(I)} (I(P))^k.
	\end{equation}
	
	Recall that the \textit{support} of a monomial $u\in S$ is the set $\supp(u)=\{i:\ x_i\ \textup{divides}\ u\}$ and the \textit{support} of a monomial ideal $I\subset S$ is the set $\supp(I)=\bigcup_{u\in\mathcal{G}(I)}\supp(u)$. We say that $I\subset S$ is \textit{fully-supported} if $\supp(I)=[n]$.
	
	A monomial ideal $I\subset S$ is called \textit{polymatroidal} if the exponent vectors of the minimal monomial generators of $I$ form the set of bases of a discrete polymatroid. A squarefree polymatroidal ideal is called \textit{matroidal}.
	
	For a monomial $u=x_1^{a_1}\cdots x_n^{a_n}\in S$, we define the \textit{$x_i$-degree} of $u$ as the integer $\deg_{x_i}(u)=\max\{j:\ x_i^j\mid u\}=a_i$. Polymatroidal ideals may as well be characterized by the so-called \textit{exchange property}. Hence, a monomial ideal $I\subset S$ is polymatroidal if and only if $I$ is equigenerated and for all $u,v\in\mathcal{G}(I)$ with $\deg_{x_i}(u)>\deg_{x_i}(v)$ for some $i$, there exists $j$ with $\deg_{x_j}(u)<\deg_{x_j}(v)$ such that $x_j(u/x_i)\in\mathcal{G}(I)$.\smallskip
	
	Following \cite{HV}, we say that a monomial ideal $I\subset S$ is of \textit{intersection type} if $I$ is the intersection of powers of monomial prime ideals. Polymatroidal ideals are of intersection type \cite[Proposition 2.1]{HV}.
	
	We say that $I$ is of \textit{minimal intersection type} if $I$ is of intersection type and has no embedded associated prime ideals. Hence $I$ is of the form $I=\bigcap_{i=1}^m P_i^{k_i}$, where the $P_i$'s are monomial prime ideals and $P_i\nsubseteq P_j$ for all $i\neq j$.\smallskip
	
	Notice that $d(I,k)\ge\omega(I)k$ for any monomial ideal $I\subset S$. Indeed, among the minimal generators of $\mathcal{R}_s(I)$ we have $ut$ for any $u\in\mathcal{G}(I)$ with $\deg(u)=\omega(I)$.\smallskip
	
	For a monomial $u=x_1^{a_1}\cdots x_n^{a_n}\in S$ and a subset $F\subseteq[n]$, we put $u_F=\prod_{i\in F}x_i^{a_i}$.
	\begin{Theorem}\label{Thm:reg-kk}
		Let $I\subset S$ be a monomial ideal of minimal intersection type. Then
		\begin{enumerate}
			\item[(a)] $u^k\in\mathcal{G}(I^{(k)})$ for any $u\in \mathcal{G}(I)$, for all $k\ge1$.
			\item[(b)] $\reg\,I^{(k)}\ge\omega(I^{(k)})\ge\omega(I)k$, for all $k\ge1$.
			\item[(c)] If $I$ has linear powers, then  
			$\reg\,I^{(k)}\ge\reg\,I^{k}$, for all $k\ge1$.
			\item[(d)] If $I$ has linear powers, $\reg_x\mathcal{R}_s(I)=0$ and $d(I,k)=\omega(I)k$ for all $k\ge1$, then $\reg\,I^{(k)}=\reg\,I^{k}$ for all $k\ge1$.
		\end{enumerate}
	\end{Theorem}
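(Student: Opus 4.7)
\medskip

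\noindent\textbf{Proof proposal.} The plan rests on deriving one clean formula for $I^{(k)}$ when $I$ has minimal intersection type; once this is in hand, all four parts fall out with little extra work. Write $I=\bigcap_{i=1}^m P_i^{k_i}$ with $F_i=\{j:x_j\in P_i\}$ and $P_i\not\subseteq P_j$ for $i\neq j$. Since no embedded primes are present, I would take $\Ass(I)=\{P_1,\dots,P_m\}$, so by (\ref{eq:symbolicP'}) it suffices to compute $I^kS_{P_i}\cap S$ for each $i$. For $j\neq i$ the prime $P_j$ contains a variable not in $P_i$, so $(P_j^{k_j})S_{P_i}=S_{P_i}$; thus $IS_{P_i}=(P_i^{k_i})S_{P_i}$, which contracts to $P_i^{k_i}$ (a $P_i$-primary ideal). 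Raising to the $k$th power and intersecting over $i$ yields the key identity
\[
I^{(k)}\ =\ \bigcap_{i=1}^m P_i^{k k_i}\qquad\text{for all }k\ge1.
\]
Equivalently, a monomial $v$ lies in $I^{(k)}$ iff $\deg_{F_i}(v)\ge kk_i$ for every $i$.

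\medskip

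For part (a), I would characterize $\mathcal{G}(I)$ via the minimality criterion: $u\in\mathcal{G}(I)$ iff $\deg_{F_i}(u)\ge k_i$ for all $i$, and for every $j\in\supp(u)$ there exists an index $i(j)$ with $j\in F_{i(j)}$ and $\deg_{F_{i(j)}}(u)=k_{i(j)}$. Applying the same numerical criterion to $u^k$ with thresholds $kk_i$, membership is clear because $\deg_{F_i}(u^k)=k\deg_{F_i}(u)\ge kk_i$. For minimality, fix $j\in\supp(u^k)=\supp(u)$ and consider $u^k/x_j$; its $F_{i(j)}$-degree drops to $kk_{i(j)}-1<kk_{i(j)}$, so it fails the constraint at $P_{i(j)}$ and cannot lie in $I^{(k)}$. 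Hence $u^k\in\mathcal{G}(I^{(k)})$.

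\medskip

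Parts (b)--(d) are then essentially formal. The inequality $\reg I^{(k)}\ge\omega(I^{(k)})$ is standard for any graded ideal, and $\omega(I^{(k)})\ge\omega(I)k$ is immediate from (a) applied to a generator $u$ of maximal degree, giving (b). For (c), the hypothesis that $I$ has linear powers forces $I$ to be equigenerated of degree $d=\omega(I)$ and $\reg I^k=kd$, so (b) delivers $\reg I^{(k)}\ge k\omega(I)=\reg I^k$. For (d), Theorem \ref{x-con-reg} together with the hypotheses gives $\reg I^{(k)}\le \reg_x\mathcal{R}_s(I)+d(I,k)=0+\omega(I)k=\reg I^k$, and combining with (c) yields equality.

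\medskip

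The only nontrivial step is the derivation of the explicit formula $I^{(k)}=\bigcap_i P_i^{kk_i}$; the subtlety is making rigorous use of the minimal intersection type hypothesis (both the incomparability of the $P_i$'s and the absence of embedded primes, so that $\Ass(I)$ is exactly $\{P_1,\dots,P_m\}$). Once this is established, parts (a)--(d) follow by straightforward degree bookkeeping.
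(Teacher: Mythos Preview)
Your proposal is correct and follows essentially the same approach as the paper: both derive the key identity $I^{(k)}=\bigcap_{i}P_i^{kk_i}$ from the minimal intersection type hypothesis via equation~(\ref{eq:symbolicP'}), and then use a tightness argument on the $F_i$-degrees to verify $u^k\in\mathcal{G}(I^{(k)})$. Your phrasing of part~(a) is slightly cleaner than the paper's---you observe directly that minimality of $u$ forces $\deg_{F_{i(j)}}(u)=k_{i(j)}$ exactly, so $\deg_{F_{i(j)}}(u^k/x_j)=kk_{i(j)}-1$, whereas the paper writes $u^k/x_i=v^kx_i^{k-1}$ and bounds $\deg(w_F)\le k\deg(v_F)+k-1$---but the underlying idea is identical, and parts~(b)--(d) are handled the same way.
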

	\begin{proof}
		(a) By assumption, $I=P_1^{k_1}\cap\dots\cap P_m^{k_m}$ where $P_i\not\subseteq P_j$ for all integers $i\ne j$. In particular, $\Ass(I)=\{P_1,\dots,P_m\}$. Since $I$ has no embedded primes, we obtain that $(I(P_i))^k=P_i^{kk_i}$ for all $i$. Hence, by (\ref{eq:symbolicP'}) we have
		\begin{equation}\label{eq:Ik-mit}
			I^{(k)}=P_1^{kk_1}\cap\dots\cap P_m^{kk_m}.
		\end{equation}
		
		Now, choose a monomial $u\in\mathcal{G}(I)$. Notice that $u^k\in I^k\subseteq I^{(k)}$. As $u\in\mathcal{G}(I)$, for every $i\in\supp(u)$, there exists $1\le j\le m$ such that $u/x_i\notin P_j^{k_j}$.
		
		Choose $i\in\supp(u)$ and let $v=u/x_i$. Then $v\notin P_j^{k_j}$ for some $j$. We have $P_j=P_F$ for some $F\subseteq[n]$. It follows that $\deg(v_F)\le k_j-1$. We claim that $w=u^k/x_i=v^kx_i^{k-1}\notin P_j^{kk_j}$. Notice that $\deg(w_F)\le\deg(v_F^k)+k-1$. Thus
		$$
		\deg(w_F)\le\deg(v_F)k+k-1\le(k_j-1)k+k-1=kk_j-1.
		$$
		This shows that $w=u^k/x_i\notin P_j^{kk_j}$. Equation (\ref{eq:Ik-mit}) implies that $u^k/x_i\notin I^{(k)}$, for every $i\in\supp(u)$. Therefore $u^k\in\mathcal{G}(I^{(k)})$.
		
		(b) Let $u\in\mathcal{G}(I)$ be a monomial with $\deg(u)=\omega(I)$. By (a) we have $u^k\in\mathcal{G}(I^{(k)})$. Hence, $\reg\,I^{(k)}\ge\omega(I^{(k)})\ge\deg(u^k)=\omega(I)k$.
		
		(c)	 We have $\reg\,I^k=\omega(I^k)=\omega(I)k$ for all $k\ge1$. Hence, the assertion follows from (b) because $\reg\,I^{(k)}\ge\omega(I^{(k)})\ge\omega(I^k)$ for all $k\ge1$.
		
		(d) Since $I$ has linear powers, we have $\reg\,I^k=\alpha(I)k=\omega(I)k$ for all $k\ge1$. Using part (b), Theorem \ref{x-con-reg}(a) and the assumptions $\reg_x\mathcal{R}_s(I)=0$ and $d(I,k)=\omega(I)k$ for all $k\ge1$, we have $\reg\,I^{(k)}=\omega(I)k=\reg\,I^k$ for all $k\ge1$, as desired.
	\end{proof}
	
	As a consequence, we establish the inequality $\reg\,I^{(k)}\ge\reg\,I^k$ of the equation proposed in Conjecture \ref{ConjA} for any polymatroidal ideal $I\subset S$ without embedded associated primes. Such an ideal is of minimal intersection type and has linear powers. Hence, Theorem~\ref{Thm:reg-kk}(c) implies
	
	\begin{Corollary}\label{unmixedPoly}
		Let $I\subset S$ be a polymatroidal ideal without embedded associated primes. Then for all $k\ge1$,
		$$\reg\,I^{(k)}\ge\reg\,I^{k}.
		$$
	\end{Corollary}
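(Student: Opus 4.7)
The plan is to deduce the corollary as an immediate application of Theorem \ref{Thm:reg-kk}(c). That theorem gives the inequality $\reg\,I^{(k)}\ge\reg\,I^k$ under two hypotheses on $I$: that it is of minimal intersection type, and that it has linear powers. So the entire task reduces to checking that every polymatroidal ideal without embedded associated primes satisfies both of these.

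For the first hypothesis, I would appeal directly to \cite[Proposition 2.1]{HV}, cited earlier in the paper, which asserts that every polymatroidal ideal can be written as an intersection of powers of monomial prime ideals, i.e., is of intersection type. Combined with the standing assumption that $I$ has no embedded associated primes, this is exactly the definition of minimal intersection type recorded just before Theorem \ref{Thm:reg-kk}; in particular, the presentation $I=\bigcap_{i=1}^m P_i^{k_i}$ can be taken with $P_i\not\subseteq P_j$ for $i\ne j$.

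For the second hypothesis, I would use the two standard facts about polymatroidal ideals recalled in the introduction: the product of polymatroidal ideals is again polymatroidal, and every polymatroidal ideal has linear resolution. By induction, $I^k$ is polymatroidal for every $k\ge1$, hence has linear resolution, which is exactly what it means for $I$ to have linear powers. With both hypotheses in hand, Theorem \ref{Thm:reg-kk}(c) applies and yields $\reg\,I^{(k)}\ge\reg\,I^k$ for all $k\ge1$.

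There is no real obstacle here; the work has already been done in Theorem \ref{Thm:reg-kk}, and the corollary is a bookkeeping consequence. The only thing to take care of is citing the two properties of polymatroidal ideals (intersection type from \cite{HV} and linear powers from the fact that products of polymatroidal ideals are polymatroidal) cleanly, without re-deriving them.
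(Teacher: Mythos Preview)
Your proposal is correct and matches the paper's own argument essentially verbatim: the paper simply notes that a polymatroidal ideal without embedded primes is of minimal intersection type and has linear powers, then invokes Theorem~\ref{Thm:reg-kk}(c). Your justifications for these two hypotheses (intersection type via \cite[Proposition~2.1]{HV}, linear powers via closure of polymatroidal ideals under products) are exactly the facts the paper has in mind.
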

	
	One may wonder whether the condition $d(I,k)=\omega(I)k$ given in Theorem \ref{Thm:reg-kk}(d) is met by interesting families of monomial ideals. In this direction, we have
	\begin{Proposition}\label{Prop:dIk}
		Let $I\subset S$ be a monomial ideal such that for any minimal monomial generator $ut^q$ of $\mathcal{R}_s(I)$ with $q>0$ and $u\in S$ we have $\deg(u)\le q+\omega(I)-1$. Then, for all $k\ge1$ we have
		$$
		d(I,k)=\omega(I)k.
		$$
	\end{Proposition}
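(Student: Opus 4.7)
My plan is a direct two-sided estimate on
$$
d(I,k)\ =\ \max_{\substack{\mathbf{c}\in\ZZ_{\ge0}^m\\ \sum_s c_sq_s\le k}}\sum_{s=1}^m c_s\deg(u_s).
$$
The lower bound $d(I,k)\ge\omega(I)k$ is already observed in the paragraph preceding the proposition: if $u\in\mathcal{G}(I)$ has $\deg(u)=\omega(I)$, then $ut$ is one of the generators $u_st^{q_s}$ (with $q_s=1$ and $\deg(u_s)=\omega(I)$), and taking $c_s=k$ with all other components zero gives a feasible tuple realising the value $\omega(I)k$.

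For the upper bound I would argue as follows. Fix any feasible $\mathbf{c}=(c_1,\dots,c_m)$, so $\sum_s c_sq_s\le k$. The key step is to compare $\deg(u_s)$ with $\omega(I)q_s$ rather than with $q_s+\omega(I)-1$ directly. From the standing hypothesis $\deg(u_s)\le q_s+\omega(I)-1$ and the fact that $q_s\ge 1$ and $\omega(I)\ge 1$, one computes
$$
\omega(I)q_s-\deg(u_s)\ \ge\ \omega(I)q_s-(q_s+\omega(I)-1)\ =\ (\omega(I)-1)(q_s-1)\ \ge\ 0.
$$
Hence $\deg(u_s)\le\omega(I)q_s$ for every generator $u_st^{q_s}$ of $\mathcal{R}_s(I)$ with $q_s>0$. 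Summing with nonnegative weights $c_s$ and using the feasibility constraint gives
$$
\sum_{s=1}^m c_s\deg(u_s)\ \le\ \omega(I)\sum_{s=1}^m c_sq_s\ \le\ \omega(I)k,
$$
so that $d(I,k)\le\omega(I)k$. Combining the two bounds yields the equality.

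There is really no serious obstacle: once one rewrites the hypothesis $\deg(u_s)\le q_s+\omega(I)-1$ as $\deg(u_s)\le\omega(I)q_s$ (valid because $(\omega(I)-1)(q_s-1)\ge 0$), the homogeneous inequality in $q_s$ aligns perfectly with the constraint $\sum c_sq_s\le k$. The only point worth double-checking is that the generators $u_st^{q_s}$ appearing in the optimization all have $q_s>0$, which is already built into the indexing convention $q_i>0$ fixed immediately before Theorem~\ref{x-con-reg}, so the hypothesis applies to every term in the sum.
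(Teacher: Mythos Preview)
Your proof is correct and follows essentially the same approach as the paper: both hinge on the inequality $q+\omega(I)-1\le \omega(I)q$, equivalent to $(\omega(I)-1)(q-1)\ge 0$. The paper applies it after summing, by splitting $\sum c_s(q_s+\omega(I)-1)$ into $\sum c_sq_s\le k$ plus $(\omega(I)-1)\sum c_s\le(\omega(I)-1)k$, whereas you apply it pointwise to get $\deg(u_s)\le\omega(I)q_s$ before summing; your packaging is slightly cleaner but the content is identical.
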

	\begin{proof}
		The assumption implies that $\mathcal{R}_s(I)=S[\{u_{i,1}t^i,\dots,u_{i,s_i}t^i\}_{i=1,\dots,r}]$, where the $u_{i,j}$'s are monomials of $S$ of degree at most $i+\omega(I)-1$. Hence,
		\begin{equation}\label{eq:d(I,k)}
			\omega(I)k\le d(I,k)\le\max_{\bf c}\{\sum_{i=1}^r\sum_{j=1}^{s_i}(i+\omega(I)-1)c_{ij}\},
		\end{equation}
		where the maximum is taken over all vectors ${\bf c}=(c_{11},\dots,c_{1s_1},\dots,c_{r1},\dots,c_{rs_r})$ such that $\sum_{i=1}^r\sum_{j=1}^{s_i}c_{ij}i\le k$. We put $c_i=\sum_{j=1}^{s_i}c_{ij}$ for all $i=1,\dots,r$. Then, we have
		\begin{equation}\label{eq:c1}
			c_1+2c_2+\dots+rc_r\le k.
		\end{equation}
		Hence $c_1+c_2+\dots+c_r\le k$ and so 
		\begin{equation}\label{eq:c2}
			(\omega(I)-1)c_1+\dots+(\omega(I)-1)c_r\le (\omega(I)-1)k
		\end{equation}
		Summing (\ref{eq:c1}) and (\ref{eq:c2}) and comparing with (\ref{eq:d(I,k)}), the assertion follows.
	\end{proof}	
	
	We close this section by providing an interesting family of monomial ideals $I$ satisfying the condition $d(I,k)=\omega(I)k$ for all $k\ge1$.
	
	For a finite simple graph $G$ with the vertex set $\{x_1,\dots,x_n\}$ and the edge set $E(G)$, the \textit{edge ideal} of $G$ is defined as the ideal $I(G)=(x_ix_j:\ \{x_i,x_j\}\in E(G))$.
	
	By \cite[Corollary 3.3]{Vil} the edge ideal $I=I(G)$ of any perfect graph $G$ satisfies the assumption of Proposition \ref{Prop:dIk}. Hence
	\begin{Corollary}\label{Cor:perfect}
		Let $G$ be a perfect graph and let $I=I(G)$. Then $d(I,k)=2k$ for all $k\ge1$.
	\end{Corollary}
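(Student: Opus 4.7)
The plan is to apply Proposition \ref{Prop:dIk} directly to the edge ideal $I = I(G)$. Since $I$ is generated in degree $2$, we have $\omega(I) = 2$, so it suffices to verify that every minimal monomial generator $ut^q$ of the symbolic Rees algebra $\mathcal{R}_s(I)$ with $q > 0$ satisfies $\deg(u) \leq q + \omega(I) - 1 = q + 1$. Once this bound is established, Proposition \ref{Prop:dIk} yields $d(I,k) = \omega(I) k = 2k$ for all $k \geq 1$, which is the desired conclusion.

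To establish the bound, I would first invoke the description of $\mathcal{R}_s(I(G))$ as the vertex cover algebra of $G$ developed by Herzog, Hibi and Trung in \cite{HHT}. Under this correspondence, the minimal generators $ut^q$ with $u = x_1^{a_1}\cdots x_n^{a_n}$ are in bijection with the indecomposable $q$-covers of $G$, namely the vectors $(a_1,\dots,a_n) \in \mathbb{Z}_{\geq 0}^n$ satisfying $a_i + a_j \geq q$ for every edge $\{i,j\} \in E(G)$, with $\deg(u) = a_1 + \cdots + a_n$. The task thus reduces to showing that every indecomposable $q$-cover of a perfect graph has size at most $q+1$, which is precisely the content of \cite[Corollary 3.3]{Vil}.

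The main (and essentially only) step of substance is the invocation of Villarreal's result, whose proof rests on the polyhedral consequences of the Perfect Graph Theorem: for perfect graphs the edge-covering polyhedron admits an integral description via cliques, and this forces the slack in an indecomposable $q$-cover to be at most $1$. Since this external input supplies the degree bound, the corollary becomes a direct specialization of Proposition \ref{Prop:dIk}, and no additional combinatorial analysis is required.
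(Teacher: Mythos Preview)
Your overall strategy coincides with the paper's: invoke \cite[Corollary~3.3]{Vil} to verify the hypothesis of Proposition~\ref{Prop:dIk} and conclude. However, the intermediate description of $\mathcal{R}_s(I(G))$ you give is mistaken, and taken literally it derails the argument. The inequalities $a_i+a_j\ge q$ for all edges $\{i,j\}\in E(G)$ describe the $q$th symbolic power of the \emph{cover} ideal $J(G)=\bigcap_{\{i,j\}\in E(G)}(x_i,x_j)$; thus the vertex cover algebra of $G$ in the sense of \cite{HHT} is $\mathcal{R}_s(J(G))$, not $\mathcal{R}_s(I(G))$. Since $I(G)=\bigcap_C P_C$ with $C$ ranging over the minimal vertex covers of $G$, the facets of $\Delta(I(G))$ are those covers, and the correct condition reads $\sum_{i\in C} a_i\ge q$ for every minimal vertex cover $C$. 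Your paraphrase of Villarreal's bound is in fact false under the edge interpretation you wrote down: for the (perfect) path on five vertices, the minimal vertex cover $\{1,3,5\}$ is an indecomposable $1$-cover of size $3>1+1$.

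What \cite[Corollary~3.3]{Vil} actually supplies is that, for perfect $G$, the algebra $\mathcal{R}_s(I(G))$ is generated by the monomials $\big(\prod_{i\in C}x_i\big)\,t^{|C|-1}$ as $C$ runs over the cliques of $G$; each such generator has $\deg(u)=|C|=q+1$, so the hypothesis of Proposition~\ref{Prop:dIk} is met and your conclusion goes through once the correct algebra is identified. With that correction, your proof and the paper's are the same.
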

	
	\section{The $x$-condition for symbolic Rees algebras}\label{sec2}
	
	In this section we study the symbolic powers of monomial ideals of minimal intersection type by investigating the defining ideal of their symbolic Rees algebra. We provide a criterion on monomial ideals of minimal intersection type which implies componentwise linearity for all symbolic powers of such ideals.
	
	A homogeneous ideal $I\subset S$ is called {\em componentwise linear}, if the ideal $$I_{\langle d\rangle}=(f\in I:\, f \textrm{ is homogeneous of degree } d)$$ has linear resolution for any positive integer $d$. If $I\subset S$ is componentwise linear, then $\reg\,I=\omega(I)$. An useful approach to show that an ideal is componentwise linear is to show that it has linear quotients. Indeed, monomial ideals with linear quotients are componentwise linear \cite[Theorem 8.2.15]{HHBook}.
	
	Recall that $I$ has {\em linear quotients} if the set $\mathcal{G}(I)$ can be ordered as $u_1,\ldots,u_m$ such that for each $i = 2,\ldots,m$, the ideal $(u_1,\ldots,u_{i-1}):(u_i)$ is generated by variables.
	In the following, for two monomials $u$ and $v$, we set $u:v=u/\gcd(u,v)$. Notice that
	$$(u_1,\ldots,u_{i-1}):(u_i)=(u_1:u_i,\dots,u_{i-1}:u_i).$$
	
	Now, let $I=\bigcap_{i=1}^m P_i^{k_i}$ be a monomial ideal of minimal intersection type. Let $\Delta(I)$ be the simplicial complex whose facets are $F_i=\supp(P_i)$, $i=1,\ldots,m$. Then the symbolic Rees algebra $\mathcal{R}_s(I)$ is the vertex cover algebra $A(\Delta(I),w)$ of the weighted simplicial complex $(\Delta(I),w)$, where $w:\mathcal{F}(\Delta(I))\to \ZZ_{>0}$ is the (weight) function $w(F_i)=k_i$ for all $i$, see~\cite[Lemma 4.1]{HHT}. Here $\mathcal{F}(\Delta(I))$ denotes the set of maximal faces of $\Delta(I)$. The {\em vertex cover algebra} 
	$A(\Delta,w)$ of a weighted simplicial complex $\Delta$ with the weight function $w:\mathcal{F}(\Delta)\to \ZZ_{>0}$  was introduced in~\cite{HHT}, as the $S$-algebra generated by all monomials 
	${\bf x^a}t^k=x_1^{a_1}\cdots x_n^{a_n}t^k$, such that 
	${\bf a} =(a_1\ldots,a_n)\in \ZZ_{\ge0}^n$ satisfies
	\begin{equation}\label{k-cover}
		\sum_{i\in F} a_i\ge kw(F)\ \ \ \textup{for all } F\in\mathcal{F}(\Delta).  
	\end{equation}
	
	A vector ${\bf a}$ satisfying~(\ref{k-cover}) is called a {\em vertex cover} of $(\Delta,w)$ of order $k$ or simply a $k$-cover of $(\Delta,w)$.
	A $k$-cover ${\bf a}$ is called  {\em indecomposable}, if one can not write ${\bf a}$ as  ${\bf a}={\bf a}_1+{\bf a}_2$, where ${\bf a}_1$ is a $k_1$-cover and ${\bf a}_2$ is a $k_2$-cover of $(\Delta,w)$ with $k=k_1+k_2$. 
	Clearly, the set of the indecomposable covers of $(\Delta,w)$ corresponds to the (unique) minimal monomial generating set of $A(\Delta,w)$ as a $S$-algebra.
	
	For a vector ${\bf a}=(a_1\ldots,a_n)$ we set $|{\bf a}|=a_1+\cdots+a_n$. We say that $I$ has a {\em linear cover function}, if there are integers $c,d\ge 0$ such that $|{\bf a}|=ck+d$ for any indecomposable $k$-cover ${\bf a}$ of $(\Delta(I),w)$  and any $k$. If this is the case, then we say that $\nu_I(k)=ck+d$ is the (linear) {\em cover function} of $I$.
	
	By~\cite[Theorem 3.2]{HHT}, $\mathcal{R}_s(I)$ is a finitely generated $S$-algebra. The minimal monomial generators $w_1,\ldots,w_m$ of the $S$-algebra $\mathcal{R}_s(I)$ are the monomials ${\bf x^a}t^k$ for which ${\bf a}$ is an indecomposable $k$-cover of $(\Delta(I),w)$. Let $T=S[y_1,\dots,y_m]$ be a polynomial ring and let $\varphi:T\rightarrow\mathcal{R}_s(I)$ be the $S$-algebra map defined by $\varphi(y_i)=w_i$ for all $i=1,\dots,m$.
	
	Let $J=\Ker\,\varphi$. We say that $\mathcal{R}_s(I)$ satisfies the $x$-condition with respect to a monomial order $>$ on $T$ if  any minimal monomial generator of $\ini_<(J)$ is of the form $vv'$ with $v\in S$ of degree at most one and $v'\in K[y_1,\ldots,y_m]$.
	
	We fix an order on the generators $w_1>\dots>w_m$ of $\mathcal{R}_s(I)$ as follows:
	\begin{equation}\label{orderY}\textup{$ut^p>vt^q$ $\Longleftrightarrow$     $p>q$ or $p=q$ and $u>_\lex v$},
	\end{equation}
	where $>_\lex$ is the lex order on $S$ induced by $x_1>\dots>x_n$. Let $>_{\rlex}$ be the reverse lex order induced by $y_1>\dots>y_m$. We fix a monomial order $>$ on $T$ as follows: 
	\begin{equation}\label{orderT}
		(\prod_{i}y_i^{a_i})(\prod_{j}x_j^{b_j})>(\prod_{i}y_i^{a_i'})(\prod_{j}x_j^{b_j'})
	\end{equation}
	if and only if
	\begin{enumerate}
		\item[(i)] either $\prod_{i}y_i^{a_i}>_{\rlex}\prod_{i}y_i^{a_i'}$,
		\item[(ii)] or else $\prod_{i}y_i^{a_i}=\prod_{i}y_i^{a_i'}$ and $\prod_{j}x_j^{b_j}>_{\lex}\prod_{j}x_j^{b_j'}$.
	\end{enumerate}\medskip
	
	The following lemma is required for the proof of Theorem \ref{x-condition}.
	\begin{Lemma}\label{cover.function}
		Let $I\subset S$ be a monomial ideal of minimal intersection type which has a linear cover function, say $\nu_I(k)=ck+d$. Let $g\in R=K[y_1,\ldots,y_m]$ be a monomial, and $k>0$ be the integer with $\varphi(g)\in I^{(k)}t^k$. Then $\deg(\varphi(g))=(c+1)k+\deg(g)d$.
	\end{Lemma}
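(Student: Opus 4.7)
The plan is a direct computation unwinding the definitions once the generators of $\mathcal{R}_s(I)$ are identified with the indecomposable covers of $(\Delta(I),w)$. First I would write $g=\prod_{j=1}^{m}y_j^{\alpha_j}$ with $\alpha_j\in\ZZ_{\ge0}$, so that $\deg(g)=\sum_{j}\alpha_j$. By the description of $\mathcal{R}_s(I)$ as a vertex cover algebra, each $y_j$ maps under $\varphi$ to a minimal generator $w_j={\bf x}^{{\bf a}_j}t^{k_j}$ of $\mathcal{R}_s(I)$, where ${\bf a}_j$ is an indecomposable $k_j$-cover of $(\Delta(I),w)$ and $k_j\ge1$.

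Next I would apply $\varphi$ multiplicatively:
$$
\varphi(g)=\prod_{j=1}^{m}w_j^{\alpha_j}=U\cdot t^{K},\quad\text{where } U=\prod_{j=1}^{m}{\bf x}^{\alpha_j{\bf a}_j}\in S,\ K=\sum_{j=1}^{m}\alpha_jk_j.
$$
Since by hypothesis $\varphi(g)\in I^{(k)}t^k$, comparing $t$-exponents forces $k=K=\sum_{j}\alpha_jk_j$. The linear cover function assumption then yields $|{\bf a}_j|=\nu_I(k_j)=ck_j+d$ for every $j$, and so
$$
\deg(U)=\sum_{j=1}^{m}\alpha_j|{\bf a}_j|=c\sum_{j=1}^{m}\alpha_jk_j+d\sum_{j=1}^{m}\alpha_j=ck+\deg(g)\,d.
$$
Interpreting $\deg(\varphi(g))$ as the total degree of $\varphi(g)\in S[t]$ under the standard grading $\deg(x_i)=\deg(t)=1$ (equivalently, as the sum of the two bidegree components in $T$), one obtains $\deg(\varphi(g))=\deg(U)+k=(c+1)k+\deg(g)\,d$, as claimed.

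There is no genuine obstacle here: the statement is a bookkeeping exercise once one matches the $y_j$'s with their associated indecomposable covers and substitutes the linear cover formula $\nu_I(k_j)=ck_j+d$. The only point requiring care is the grading convention, namely that the exponent of $t$ contributes to $\deg(\varphi(g))$; absent this convention the right-hand side would be $ck+\deg(g)\,d$ rather than $(c+1)k+\deg(g)\,d$.
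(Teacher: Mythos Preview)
Your argument is correct and essentially identical to the paper's own proof: both write $g$ as a product of $y_j$'s, apply $\varphi$ multiplicatively, read off $k=\sum \alpha_j k_j$ from the $t$-exponent, substitute $|{\bf a}_j|=ck_j+d$, and add the $t$-degree $k$ to obtain $(c+1)k+\deg(g)\,d$. Your explicit remark on the grading convention for $t$ is a helpful clarification that the paper leaves implicit.
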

	\begin{proof}
		Let $g=y_{i_1}\cdots y_{i_s}$, and $\varphi(y_{i_j})=
		\textbf{x}^{\textbf{a}_j} t^{k_j}$ for indecomposable $k_j$-covers $\textbf{a}_j$ of $(\Delta(I),w)$, and integers $k_j>0$,  $j=1,\ldots,s$. Then $\varphi(g)=\textbf{x}^{\textbf{a}_1+\cdots+\textbf{a}_s}  t^{k_1+\cdots+k_s}$, and from $\varphi(g)\in I^{(k)}t^k$ we obtain $k_1+\cdots+k_s=k$. Moreover, $|\textbf{a}_j|=\nu_I(k_j)=ck_j+d$ for all $j$.  Hence, $$\deg(\varphi(g))=k+\sum_{j=1}^s |\textbf{a}_j|=k+\sum_{j=1}^s (ck_j+d)=k+ck+sd=(c+1)k+\deg(g)d,$$
		as desired.
	\end{proof}
	
	The next result gives a criteria for the componentwise linearity of symbolic powers of monomial ideals of minimal intersection type.
	
	\begin{Theorem}\label{x-condition}
		Let $I\subset S$ be a monomial ideal of minimal intersection type which has a linear cover function. If  $\mathcal{R}_s(I)$ satisfies the $x$-condition with respect to the order given in \textup{(\ref{orderT})}, then $I^{(k)}$ has linear quotients for all $k\ge1$. In particular, $I^{(k)}$ is componentwise linear for all $k$. 
	\end{Theorem}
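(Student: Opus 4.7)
The plan is to exploit the $x$-condition together with the linear cover function to order $\mathcal{G}(I^{(k)})$ via its standard preimages in $T$, and then verify linear quotients directly. For each $u \in \mathcal{G}(I^{(k)})$ let $\tilde{g}_u \in T$ denote the unique monomial with $\varphi(\tilde{g}_u) = u t^k$ and $\tilde{g}_u \notin \ini_<(J)$. I first observe that $\tilde{g}_u$ is a pure $y$-monomial: if $\tilde{g}_u = x^\alpha h$ with $\alpha \ne 0$ and $h \in K[y_1,\dots,y_m]$, writing $\varphi(h) = wt^k$ gives $u = x^\alpha w$ with $w \in I^{(k)}$, contradicting $u \in \mathcal{G}(I^{(k)})$. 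Lemma~\ref{cover.function} applied to $\tilde{g}_u$ then yields $\deg_y(\tilde{g}_u) = (\deg u - ck)/d$, so $\deg u$ and $\deg_y \tilde{g}_u$ determine each other. I enumerate $\mathcal{G}(I^{(k)}) = \{u_1,\dots,u_N\}$ so that $\tilde{g}_{u_1} <_\rlex \tilde{g}_{u_2} <_\rlex \cdots <_\rlex \tilde{g}_{u_N}$; this sorts primarily by $\deg u_i$ and, within each degree, by reverse lex.

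The heart of the argument is a standard $S$-polynomial computation. Fix $i$ and $j < i$. The binomial
$$
\tilde{g}_{u_j}(u_i : u_j) \;-\; \tilde{g}_{u_i}(u_j : u_i) \in J,
$$
and since the order \eqref{orderT} prioritizes $y$-parts and $\tilde{g}_{u_i} >_\rlex \tilde{g}_{u_j}$, its leading monomial is $\tilde{g}_{u_i}(u_j : u_i)$, which therefore lies in $\ini_<(J)$. By the $x$-condition some minimal generator $w$ of $\ini_<(J)$ with $\deg_x w \le 1$ divides this monomial. The pure-$y$ case $w \in K[y_1,\dots,y_m]$ is impossible, since then $w \mid \tilde{g}_{u_i}$ would violate the standardness of $\tilde{g}_{u_i}$. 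Hence $w = x_r v'$ with $v' \in K[y_1,\dots,y_m]$, and the divisibility splits as $x_r \mid u_j : u_i$ (supplying the candidate colon variable) and $v' \mid \tilde{g}_{u_i}$.

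To certify $x_r \in (u_1,\dots,u_{i-1}) : u_i$, I exhibit an explicit predecessor. Pick a binomial $x_r v' - m \in J$ with $m = x^\beta y^\alpha$; the relation $m < x_r v'$ in \eqref{orderT} forces $v' >_\rlex y^\alpha$. With $h = \tilde{g}_{u_i}/v'$, one checks $\varphi(mh) = x_r u_i t^k$, so $x^\beta \mid x_r u_i$ and $u' := x_r u_i / x^\beta \in I^{(k)}$; let $u_l$ be any minimal generator of $\mathcal{G}(I^{(k)})$ dividing $u'$, which then also divides $x_r u_i$. A bidegree comparison through Lemma~\ref{cover.function} gives $|\beta| = (\deg_y v' - \deg_y y^\alpha)\,d + 1 \geq 1$, so $\deg u_l \le \deg u' \le \deg u_i$. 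If $\deg u_l < \deg u_i$, the degree-first convention of $<_\rlex$ yields $u_l < u_i$ outright. If $\deg u_l = \deg u_i$, necessarily $u_l = u'$ is itself a minimal generator of the same degree as $u_i$, and the chain
$$
\tilde{g}_{u_i} = v'h \;>_\rlex\; y^\alpha h \;\ge_\rlex\; \tilde{g}_{u_l}
$$
(the first inequality from $v' >_\rlex y^\alpha$ inside the same $y$-degree, the second from Gröbner reduction of $y^\alpha h$ to its standard form $\tilde{g}_{u_l}$) again gives $u_l < u_i$.

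Running this over all $j < i$ shows $(u_1,\dots,u_{i-1}) : u_i$ is generated by the variables so constructed, establishing that $I^{(k)}$ has linear quotients; componentwise linearity follows from \cite[Theorem 8.2.15]{HHBook}. I expect the main obstacle to be the bidegree bookkeeping in the previous paragraph: verifying $|\beta| \ge 1$ and executing the reverse-lex comparison when $u_l$ and $u_i$ share the same degree. These are the precise points where the linear cover function hypothesis becomes essential, since without it the $y$-degrees of $v'$ and $y^\alpha$ (and hence the placement of $u_l$ in our ordering) are not cleanly controlled.
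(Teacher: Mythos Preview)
Your proof is correct and follows essentially the same route as the paper's. Your standard monomial $\tilde{g}_u$ coincides with the paper's $f_u$ (the $\rlex$-smallest pure $y$-preimage of $ut^k$), since for a toric ideal the unique monomial outside $\ini_<(J)$ in a given fibre of $\varphi$ is precisely the smallest one; your observation that $\tilde{g}_u$ has no $x$-part is exactly the paper's first claim that $f_u\notin\ini_<(J)$ combined with minimality of $u$. The subsequent syzygy argument, the extraction of a variable $x_r$ via the $x$-condition, and the use of the linear cover function to compare $y$-degrees are all the same. Your case split (by whether $\deg u_l<\deg u_i$ or $\deg u_l=\deg u_i$) differs cosmetically from the paper's split (by whether $\varphi(g_2g_3)$ is a minimal generator), but the two are equivalent once one tracks the degree formula $|\beta|=1+(\deg_y v'-\deg_y y^\alpha)d$. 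Two small remarks: the parenthetical ``inside the same $y$-degree'' is unnecessary, since $>_\rlex$ is a monomial order and hence multiplicative regardless of degree; and you should note (as you implicitly use) that in the equal-degree case $u_l=u_i$ is impossible, since then $y^\alpha h$ and $\tilde{g}_{u_i}$ would be distinct preimages of $u_it^k$ with $y^\alpha h<_\rlex\tilde{g}_{u_i}$, contradicting standardness of $\tilde{g}_{u_i}$.
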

	\begin{proof}
		We adopt the notation introduced above. For a minimal monomial generator $u$ of $I^{(k)}$, the element $ut^k\in A(\Delta(I),w)$ has a presentation of the form $ut^k=(\textbf{x}^{\textbf{a}_1}t^{q_1})\cdots (\textbf{x}^{\textbf{a}_s}t^{q_s})$ such that each $\textbf{a}_j$ is an indecomposable $q_j$-cover of $(\Delta(I),w)$, $q_j>0$, and $q_1+\cdots+q_s=k$. Such presentation is the image of a monomial $y_{i_1}\cdots y_{i_s}$ in $R=K[y_1,\ldots,y_m]$ under the map $\varphi$ with $\varphi(y_{i_\ell})=\textbf{x}^{\textbf{a}_\ell} t^{q_\ell}$ for all $\ell$. Then, there exists a presentation among all presentations of $ut^k$ in $A(\Delta(I),w)$, whose preimage in $R$ is the smallest term with respect to $>_{\rlex}$ among the preimages of all presentations of $ut^k$. Indeed, the relations $q_1+\cdots+q_s=k$ and $q_i>0$ for all $i$ show that any monomial in $R$ which is mapped to $ut^k$ is of degree at most $k$. So the number of monomials in $R$ which are mapped to $ut^k$ is finite and hence there is a minimal one among them with respect to $>_{\rlex}$. We denote this monomial by $f_u$.
		
		First we show that $f_u\notin\ini_<(J)$ for any $u\in\mathcal{G}(I^{(k)})$. 
		Assume on the contrary that there exists a relation $h=f_u-vg\in J$, with $\ini_<(h)=f_u$, where $v\in S$ and $g\in R$ are monomials. Then $ut^k=\varphi(f_u)=v\varphi(g)$ and $\varphi(g)\in I^{(k)}t^k$. Since $u$ is a minimal generator of $I^{(k)}$, we conclude that $v=1$. Then $h=f_u-g\in J$ which means that $\varphi(g)=\varphi(f_u)=ut^k$. Hence $g$ is also the preimage of $ut^k$ under $\varphi$. Then by our choice of $f_u$ we get $g>_{\rlex} f_u$. This contradicts to $\ini_<(h)=f_u$. Consequently $f_u\notin\ini_<(J)$, as desired. 
		
		We put an order on the minimal set of monomial generators of $I^{(k)}$ as follows:
		for $u,u'\in \mathcal{G}(I^{(k)})$ we set $u<u'$ if and only if $f_u<_{\rlex} f_{u'}$. We show that 
		$I^{(k)}$ has linear quotients with respect to this order on its minimal generating set. Let $u_1<\cdots<u_r$ be such an order on $\mathcal{G}(I^{(k)})$. Then $f_{u_1}<_{\rlex}\cdots <_{\rlex} f_{u_r}$. Fix an integer $2\le i\le r$ and let $v\in (u_1,\ldots,u_{i-1}): (u_i)$ be a monomial. Then $vu_i=v'u_j$ for some $j<i$ and a monomial $v'\in S$. Hence, $p=vf_{u_i}-v'f_{u_j}\in J$ with $\ini_<(p)=vf_{u_i}$. By our assumption that $\mathcal{R}_s(I)$ satisfies the $x$-condition, there exists a monomial $v_1g_1\in \ini_<(J)$ with $v_1\in S$ of degree at most one and $g_1\in R$ such that $v_1g_1\mid vf_{u_i}$. First we show that $v_1\neq 1$. Indeed, if $v_1=1$, then $g_1\in \ini_<(J)$ and since $g_1\mid f_{u_i}$, we get $f_{u_i}\in \ini_<(J)$, which is not possible, as shown above. 
		Hence, $v_1\neq 1$. Since $v_1$ is of degree at most one, we obtain $v_1=x_{\ell}$ for some $\ell$. So $x_{\ell}g_1\in \ini_<(J)$. Let $p'=x_{\ell}g_1-v_2g_2\in J$ be a binomial with $\ini_<(p')=x_{\ell}g_1$, where $v_2\in S$ and $g_2\in R$ are monomials. From $g_1\mid f_{u_i}$ we have $f_{u_i}=g_1g_3$ with $g_3\in R$.  Therefore, $g_3p'=x_\ell f_{u_i}-v_2g_2g_3\in J$. Since $g_1>_{\rlex} g_2$ we have $f_{u_i}>_{\rlex} g_2g_3$. 
		We consider the following two cases.\smallskip
		
		\textbf{Case 1.}
		Suppose that $g_2g_3$ is mapped to a minimal monomial generator, say $u_st^k$ of $I^{(k)}t^k$. Then $g_2g_3\ge _{\rlex} f_{u_s}$. From this and $f_{u_i}>_{\rlex} g_2g_3$ we get $f_{u_i}>_{\rlex} f_{u_s}$. The definition of our order on $\mathcal{G}(I^{(k)})$  implies that $u_s<u_i$. Moreover, $$x_\ell u_it^k=\varphi(x_{\ell}f_{u_i})=\varphi (v_2g_2g_3)=v_2\varphi (g_2g_3)=v_2u_st^k.$$ Hence $x_\ell \in (u_1,\ldots,u_{i-1}):(u_i)$ with $x_\ell$ dividing $v$, as desired.\smallskip
		
		\textbf{Case 2.} Suppose that $\varphi(g_2g_3)$ is not a minimal generator of $I^{(k)}t^k$. Then $\varphi(g_2g_3)=v''(u_st^k)$ for some monomial $v''\in S$ with $v''\neq 1$ and some $1\leq s\leq r$. Then $g_2g_3-v''f_{u_s}\in J$. We show that $g_2g_3>_{\rlex} f_{u_s}$. To this aim it is enough to show that $\deg(f_{u_s})<\deg(g_2g_3)$. Suppose that this is not the case and $\deg(f_{u_s})\ge \deg(g_2g_3)$. Since $I$ has a linear cover function, by Lemma~\ref{cover.function}, there exist integers $c,d\ge 0$ such that $\deg(u_st^k)=(c+1)k+\deg(f_{u_s})d$ and $\deg(\varphi(g_2g_3))=(c+1)k+\deg(g_2g_3)d$. Hence, $\deg(u_st^k)\ge \deg(\varphi(g_2g_3))$.
		This contradicts to $v''\neq 1$.  Therefore, $g_2g_3>_{\rlex} f_{u_s}$. From $f_{u_i}>_{\rlex} g_2g_3$ we obtain $f_{u_i}>_{\rlex} f_{u_s}$ and hence $u_s<u_i$. Moreover,  $$x_\ell f_{u_i}-v_2v''f_{u_s}=(x_\ell f_{u_i}-v_2g_2g_3)+v_2(g_2g_3-v''f_{u_s})\in J.$$ So we get 
		$$x_\ell u_it^k=\varphi(x_{\ell}f_{u_i})= \varphi (v_2v''f_{u_s})=v_2v''u_st^k.$$ Hence, $x_\ell u_i=v_2v''u_s$ with $s<i$. Therefore, $x_\ell \in (u_1,\ldots,u_{i-1}):(u_i)$ with $x_\ell$ dividing $v$.
	\end{proof}\medskip
	
	\section{Symbolic powers of squarefree Veronese ideals}\label{sec3}
	
	The goal of this section is to prove Conjectures \ref{ConjA} and \ref{ConjB} for any squarefree Veronese ideal.
	Let $n$ and $d$ be positive integers. The \textit{squarefree Veronese ideal} of degree $d$ of $S=K[x_1,\ldots,x_n]$ is the monomial ideal $I_{n,d}$ generated by all squarefree monomials of degree $d$ in $S$,
	$$
	I_{n,d}\ =\ (x_{i_1}x_{i_2}\cdots x_{i_d}\ :\ 1\le i_1<i_2<\dots<i_d\le n).
	$$
	
	Notice that $I_{n,d}=(0)$, whenever $d>n$. We remark that $I_{n,d}$ is a polymatroidal ideal of minimal intersection type. 
	
	Let $I=I_{n,d}$. By~\cite[Lemma 4.1, Proposition 4.6]{HHT} it follows that
	
	\begin{equation}\label{eq:Rees}
		\mathcal{R}_s(I)=S[x_{i_1}\cdots x_{i_{q+d-1}}t^q:\ 1\leq q\leq n-d+1,\ i_1<\cdots<i_{d+q-1} ].
	\end{equation}
	
	It is easy to see from (\ref{eq:Rees}) that a vector $\textbf{a}\in \ZZ_{\ge0}^n$ is an indecomposable $q$-cover of $(\Delta(I),w)$ if and only if $\textbf{x}^{\textbf{a}}$ is squarefree of degree $q+d-1$. Hence, Proposition \ref{Prop:dIk} implies immediately
	\begin{Corollary}\label{Cor:Idn-dIk}
		Let $I=I_{n,d}$ be the squarefree Veronese ideal of degree $d$. Then $d(I,k)=dk$ for all $k\ge1$.
	\end{Corollary}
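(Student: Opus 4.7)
The plan is to verify that $I = I_{n,d}$ satisfies the hypothesis of Proposition \ref{Prop:dIk} and then invoke that proposition directly. First I would read off from the explicit description (\ref{eq:Rees}) of the symbolic Rees algebra the shape of every minimal monomial generator $u t^q$ with $q > 0$: by that formula, each such generator has the form $u t^q = x_{i_1}\cdots x_{i_{q+d-1}}t^q$ with $1 \le i_1 < \dots < i_{q+d-1} \le n$ and $1 \le q \le n-d+1$. In particular, $\deg(u) = q + d - 1$.

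Next I would observe that, since $I_{n,d}$ is equigenerated in degree $d$, one has $\omega(I) = d$. Combining this with the previous observation yields $\deg(u) = q + d - 1 = q + \omega(I) - 1$ for every minimal generator $u t^q$ of $\mathcal{R}_s(I)$ with $q > 0$. Thus the hypothesis of Proposition \ref{Prop:dIk} is met (in fact, with equality). Applying that proposition immediately gives $d(I,k) = \omega(I)k = dk$ for all $k \ge 1$, as required.

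There is essentially no obstacle here: the only thing to notice is that the explicit generating set of $\mathcal{R}_s(I)$ recorded in (\ref{eq:Rees}) matches the linear bound $\deg(u) \le q + \omega(I) - 1$ on the nose, so Proposition \ref{Prop:dIk} applies without further work.
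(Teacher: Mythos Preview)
The proposal is correct and follows essentially the same approach as the paper: both read off from (\ref{eq:Rees}) that every minimal generator $ut^q$ of $\mathcal{R}_s(I)$ with $q>0$ satisfies $\deg(u)=q+d-1=q+\omega(I)-1$, and then apply Proposition~\ref{Prop:dIk} directly.
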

	
	The following theorem plays a crucial role for our aim.\medskip
	
	\begin{Theorem}\label{Veronese:x-condition}
		Let $I=I_{n,d}$ be the squarefree Veronese ideal of degree $d$. Then $\mathcal{R}_s(I)$ satisfies the $x$-condition.
	\end{Theorem}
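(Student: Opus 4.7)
The plan is to verify the $x$-condition directly by exhibiting a Gröbner basis $\mathcal{G}$ of $J=\Ker\varphi$ with respect to the order (\ref{orderT}), every element of which is a binomial whose leading monomial has the form $v\,v'$ with $v\in S$ of $x$-degree at most one and $v'\in K[y_1,\ldots,y_m]$. Writing the non-variable generators of $\mathcal{R}_s(I_{n,d})$ as $y_A:=(\prod_{i\in A}x_i)\,t^{|A|-d+1}$ for $A\subseteq[n]$ with $|A|\ge d$, these exhaust the $S$-algebra generators by (\ref{eq:Rees}). Under the order (\ref{orderY}), $y_A>y_B$ iff $|A|>|B|$, or $|A|=|B|$ and the smallest element of $A\triangle B$ lies in $A$. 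Since $\varphi(\prod x_i^{a_i}\prod y_{A_j}^{b_j})$ has $x$-content $\sum a_ie_i+\sum b_j\chi_{A_j}$ and $t$-degree $\sum b_j(|A_j|-d+1)$, a binomial lies in $J$ precisely when both monomials share these two invariants.

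I would build $\mathcal{G}$ out of two natural families of such binomials. The first is the family of \emph{exchange relations} $x_iy_A-x_jy_B$, where $i\notin A$, $j\in A$, $|A|\ge d$, and $B=(A\setminus\{j\})\cup\{i\}$; each such binomial has leading monomial of $x$-degree exactly one (either $x_iy_A$ or $x_jy_B$, depending on the relative order of $A,B$ under (\ref{orderY})). The second is the family of \emph{$y$-sorting relations} $y_Ay_B-y_Cy_D$, and, if needed, cubic sorts $y_Ay_By_C-y_{A'}y_{B'}y_{C'}$, coming from equalities $\chi_A+\chi_B=\chi_C+\chi_D$ with $|C|,|D|\ge d$ (and similarly for triples); these have leading monomial of $x$-degree zero. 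In view of Proposition~\ref{Prop:ini3}, which asserts that $\ini_<(J)$ is generated in degree $\le3$, cubic $y$-sorts are probably unavoidable, but their leading monomials still live entirely among the $y$-variables.

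The crux of the proof, and also the main obstacle, is to verify that no minimal generator of $\ini_<(J)$ can have $x$-degree $\ge2$. Concretely, the key lemma to establish is: whenever $w=x^a\,y^B$ is the leading monomial of some binomial in $J$ with $|a|\ge2$, some leading monomial of a binomial in one of the two families above already divides $w$. The central combinatorial input is that if $x_ix_j\mid w$ and some $y_A$ appearing in $w$ satisfies $j\in A$, $i\notin A$, then the exchange binomial $x_iy_A-x_jy_{(A\setminus\{j\})\cup\{i\}}$ lies in $J$ and contributes a leading monomial of $x$-degree one dividing $w$. A careful case analysis---driven by the reverse-lex convention on the $y$-part, which concentrates the leading monomial on the $y_A$'s of highest $t$-degree---then shows that such a configuration is always available when $|a|\ge2$, unless $w$ is already divisible by the leading monomial of a pure $y$-sort. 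Applying Buchberger's criterion to $\mathcal{G}$ and checking that all S-polynomial reductions respect the $x$-degree bound would then complete the argument.
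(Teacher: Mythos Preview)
Your plan has a concrete gap that the paper's own example already exposes. Take $I=I_{4,3}$; the paper records that $\ini_<(J)$ contains $x_1y_5^2$, where $y_5$ corresponds to $A=\{2,3,4\}$. This generator is \emph{not} divisible by the leading monomial of any element of your two proposed families. For the exchange relations $x_iy_A-x_jy_B$ with $A=\{2,3,4\}$ and $B=(A\setminus\{j\})\cup\{i\}$, the only index outside $A$ is $i=1$, and since $1<j$ for every $j\in A$, one always has $u_B>_{\lex}u_A$, so the leading term is $x_jy_B$, never $x_1y_5$. For the pure $y$-sorts, $\chi_A+\chi_A=2\chi_A$ admits only the trivial decomposition, so $y_5^2$ is not a leading term either. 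Thus your candidate set is not a Gr\"obner basis, and the ``key combinatorial input'' you state (an exchange with $j\in A$, $i\notin A$) is precisely unavailable here: the obstruction is the configuration $\max\supp(f)<\min\supp(u_r)$, which the paper isolates as its Subcase~2.2.

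There are two further structural issues. First, your appeal to Proposition~\ref{Prop:ini3} is circular: that proposition is proved \emph{after} Theorem~\ref{Veronese:x-condition} and its proof explicitly invokes the argument of Theorem~\ref{Veronese:x-condition}. Second, the paper does not try to exhibit a Gr\"obner basis at all; instead it takes an arbitrary minimal generator $fy_{i_1}\cdots y_{i_r}$ of $\ini_<(J)$ together with a witnessing binomial $b=fy_{i_1}\cdots y_{i_r}-gy_{j_1}\cdots y_{j_s}$, and manufactures smaller leading terms via ``transfer'' moves that change the $t$-degrees of the $y$-factors (e.g.\ $x_p(u_\ell t^{q_\ell})(u_ht^{q_h})=x_c(x_pu_\ell t^{q_\ell+1})((u_h/x_c)t^{q_h-1})$), rather than same-size exchanges. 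The delicate case---exactly your blind spot---is when $r=s$, the $u_i$ form a nested chain, and every variable of $f$ is lex-smaller than every variable of $u_r$; there neither same-size exchanges nor $y$-sorts on the \emph{leading} side help, and the paper resolves it by passing to the \emph{trailing} side $gy_{j_1}\cdots y_{j_s}$, producing a new relation that strictly divides $fy_{i_1}\cdots y_{i_r}$ and thereby contradicts minimality. Any correct proof must incorporate this cross-side argument (or an equivalent device); your outline does not.
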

	\begin{proof}
		We show that $\mathcal{R}_s(I)$ satisfies the $x$-condition with respect to the order $>$ introduced in (\ref{orderT}). To this aim, let $fy_{i_1}\cdots y_{i_r}\in\ini_{<}(J)$ be a minimal monomial generator with $i_1\le\dots\le i_r$ and $f\in S$ a monomial. We need to show that $\deg(f)\leq 1$. We have a binomial relation of the form 
		$$
		b=fy_{i_1}\cdots y_{i_r}-gy_{j_1}\cdots y_{j_s}\in J
		$$
		with $g\in S$ a monomial, $j_1\le\dots\le j_s$ and $\ini_{<}(b)=fy_{i_1}\cdots y_{i_r}$. By the minimality of $\ini_{<}(b)$ we have $\gcd(f,g)=1$ and $\{i_1,\dots,i_r\}\cap\{j_1,\dots,j_s\}=\emptyset$. If $f=1$ there is nothing to prove. Suppose that $\deg(f)\ge1$. We prove that $f$ is a variable and $r\leq 2$. Let $\varphi(y_{i_\ell})=u_\ell t^{q_\ell}$ for all $\ell=1,\dots,r$ and $\varphi(y_{j_\ell})=v_\ell t^{q_\ell'}$ for all $\ell=1,\dots,s$. By the definition of the order $>$ given in (\ref{orderT}) we have $r\ge s$. Now, we distinguish the two possible cases.\smallskip
		
		\textbf{Case 1.} Let $r>s$. Since $\deg(f)\ge1$, there exists a variable $x_p\mid f$. We claim that there exist distinct integers $\ell,h$ such that $x_p\nmid u_{\ell}$ and $x_p\nmid u_h$. Indeed, if this is not the case then $\deg_{x_p}(fu_1\cdots u_r)\ge r$. Since $b$ is a relation we have $fu_1\cdots u_r=gv_1\cdots v_s$. From $\gcd(f,g)=1$ we conclude that $\deg_{x_p}(gv_1\cdots v_s)\le s<r$ which is a contradiction. Hence there exist integers $\ell<h$ such that $x_p\nmid u_{\ell}$ and $x_p\nmid u_h$. Then $y_{i_\ell}\ge y_{i_h}$ and hence by (\ref{orderY}), $q_\ell\ge q_h$. Thus
		\begin{equation}\label{eq:plh}
			x_p(u_\ell t^{q_\ell})(u_h t^{q_h})=x_c(x_pu_\ell t^{q_\ell+1})((u_h/x_c)t^{q_h-1}),
		\end{equation}
		where $x_c$ is some variable dividing $u_h$. By (\ref{eq:Rees}), the monomials $x_pu_\ell t^{q_\ell+1}$ and $(u_h/x_c)t^{q_h-1}$ are among the generators of $\mathcal{R}_s(I)$. The previous equality implies that
		\begin{equation}\label{eq:b}
			b'=x_p y_{i_\ell}y_{i_h}-x_cz y_{k}\in J 
		\end{equation}
		with $\ini_{<}(b')=x_p y_{i_\ell}y_{i_h}\in\ini_<(J)$, $\varphi(y_k)=x_pu_\ell t^{q_\ell+1}$ and $z=u_h/x_c\in S$ if $q_h=1$, or else $z=y_{k'}$ with $\varphi(y_{k'})=(u_h/x_c)t^{q_h-1}$ if $q_h>1$. Since $\ini_<(b')\mid fy_{i_1}\cdots y_{i_r}$ and $fy_{i_1}\cdots y_{i_r}\in\mathcal{G}(\ini_<(J))$ we obtain that $fy_{i_1}\cdots y_{i_r}=\ini_<(b')=x_py_{i_\ell}y_{i_h}$, as desired.\smallskip
		
		\textbf{Case 2.} Let $r=s$. Firstly, assume that there exist integers $\ell<h$ such that $u_h\nmid u_\ell$. Then, we can pick a variable $x_c$ with $x_c\mid u_h$ and $x_c\nmid u_\ell$.  Then
		$$
		(u_\ell t^{q_\ell})(u_h t^{q_h})=((x_cu_\ell )t^{q_\ell+1})((u_h/x_c)t^{q_h-1}).
		$$
		Similar to the previous argument, this equality shows that $y_{i_\ell}y_{i_h}\in\ini_<(J)$. Therefore $y_{i_\ell}y_{i_h}=fy_{i_1}\cdots y_{i_r}$, but this is not possible because $\deg(f)\ge1$. Hence, we deduce that $u_{i+1}\mid u_i$ for all $i=1,\dots,r-1$.  
		
		Now, suppose there exists $x_p\mid f$ such that $x_p$ does not divide at least two monomials among $u_1,\dots,u_r$, say $u_\ell,u_h$ with $\ell<h$. Using (\ref{eq:plh}) we obtain a relation $b'$ of the form given in (\ref{eq:b}). A similar argument as in the Case 1 shows that $fy_{i_1}\cdots y_{i_r}=x_p y_{i_\ell}y_{i_h}$, as desired.
		
		So we may assume that for any $x_p$ with $x_p\mid f$, $x_p$ divides at least $r-1$ monomial among $u_1,\dots,u_r$. Hence $\deg_{x_p}(fu_1\cdots u_r)\ge r$. From $\gcd(f,g)=1$ we have $\deg_{x_p}(gv_1\cdots v_r)\le r$. Since $fu_1\cdots u_r=gv_1\cdots v_r$ we have $\deg_{x_p}(fu_1\cdots u_r)=r$. Hence,  we obtain that $f$ is squarefree. Moreover, using that $u_{i+1}\mid u_i$ for all $i=1,\dots,r-1$ we obtain that $x_p\nmid u_r$ and $x_p\mid u_i$ for all $i=1,\dots,r-1$ and all $x_p\mid f$. Since $f$ is squarefree, we get $f\mid u_i$ for all $i=1,\dots,r-1$ and $\gcd(f,u_r)=1$. Now, we distinguish the two possible cases.\smallskip
		
		\textbf{Subcase 2.1.} Suppose there exist $p$ and $j$ with $p>j$ such that $x_p\mid f$ and $x_j\mid u_r$. Then
		$$
		x_p(u_rt^{q_r})=x_j((x_pu_r/x_j)t^{q_r}).
		$$
		Let $\varphi(y_k)=(x_pu_r/x_j)t^{q_r}$ for some $k$. Then $b'=x_py_{i_r}-x_jy_k\in J$ and $\ini_<(b')=x_py_{i_r}$ because $u_r>_{\lex}x_pu_r/x_j$. Then $fy_{i_1}\cdots y_{i_r}=x_py_{i_r}$, as desired.\smallskip
		
		\textbf{Subcase 2.2.} Suppose that the Subcase 2.1 does not hold. Since $\gcd(f,u_r)=1$ we have $\max\supp(f)<\min\supp(u_r)$. Notice that $f^r\mid fu_1\cdots u_r$ because $f\mid u_i$ for $i=1,\dots,r-1$. Therefore $f^r\mid gv_1\cdots v_r$. Since $\gcd(f,g)=1$ and $v_i$'s are squarefree, we conclude that $f\mid v_j$ for $j=1,\dots,r$. From the definition of $<$, since $\ini_<(b)=fu_1\cdots u_r$ we have either $q_r>q_r'$ or $q_r=q_r'$ and $u_r>_{\lex}v_r$. This implies that $\deg(u_r)=q_r+(d-1)\ge q'_r+(d-1)=\deg(v_r)$. Since $u_r\ne v_r$ we may choose a variable $x_c$ such that $x_c\mid u_r$ and $x_c\nmid v_r$. It follows that $x_c^r\mid fu_1\cdots u_r=gv_1\cdots v_r$. Hence $x_c\mid g$. Now choose $x_p\mid f$. Since $f\mid v_r$ we have $x_p\mid v_r$ and $p<c$ because $\max\supp(f)<\min\supp(u_r)$. Then
		$$
		x_c(v_rt^{q_r'})=x_p((x_cv_r/x_p)t^{q_r'}).
		$$
		Hence $b'=x_cy_{j_r}-x_py_k\in J$, where $\varphi(y_k)=(x_cv_r/x_p)t^{q_r'}$ and $\ini(b')=x_cy_{j_r}$ because $v_r>_{\lex}x_cv_r/x_p$. Multiplying $b'$ by $(g/x_c)y_{j_1}\cdots y_{j_{r-1}}$ we get the relation $$b''=gy_{j_1}\cdots y_{j_r}-x_p(g/x_c)y_{j_1}\cdots y_{j_{r-1}}y_k\in J.$$ Therefore
		$$
		b+b''=fy_{i_1}\cdots y_{i_r}-x_p(g/x_c)y_{j_1}y_{j_1}\cdots y_{j_{r-1}}y_k\in J
		$$
		and dividing by the common factor $x_p$ we obtain the relation
		$$
		b'''=(f/x_p)y_{i_1}\cdots y_{i_r}-(g/x_c)y_{j_1}y_{j_1}\cdots y_{j_{r-1}}y_k\in J
		$$
		with $\ini_<(b''')=(f/x_p)y_{i_1}\cdots y_{i_r}\in\ini_<(J)$. This contradicts the fact that $fy_{i_1}\cdots y_{i_r}$ is a minimal generator of $\ini_<(J)$. So this case does not happen.
	\end{proof}
	
	As a consequence of Theorem \ref{x-condition} and Theorem \ref{Veronese:x-condition} we obtain the following result which confirms Conjectures A and B for squarefree Veronese ideals.
	
	\begin{Corollary}\label{Cor:Veronese}
		Let $I=I_{n,d}$. Then 
		\begin{enumerate}
			\item [(a)] $I^{(k)}$ has linear quotients, and so it is componentwise linear, for all $k\ge1$.
			\item [(b)] $\reg\,I^{(k)}=\reg\,I^k=dk$ for all $k\ge1$.
		\end{enumerate}
	\end{Corollary}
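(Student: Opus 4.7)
The plan is to derive part (a) as a direct application of Theorem \ref{x-condition} to $I=I_{n,d}$, and then to obtain part (b) by combining (a) with a short degree count on the generators of $I^{(k)}$.

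For part (a), I would verify the three hypotheses of Theorem \ref{x-condition}. First, $I_{n,d}$ is of minimal intersection type, as recorded in the paragraph preceding the corollary. Second, $I_{n,d}$ has a linear cover function: combining the description of $\mathcal{R}_s(I)$ in (\ref{eq:Rees}) with the remark that follows, a vector $\mathbf{a}\in\ZZ_{\ge0}^n$ is an indecomposable $q$-cover of $(\Delta(I),w)$ if and only if $\mathbf{x}^{\mathbf{a}}$ is squarefree of degree $q+d-1$, so $\nu_I(q)=q+(d-1)$. Third, Theorem \ref{Veronese:x-condition} already supplies the $x$-condition with respect to the monomial order (\ref{orderT}). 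With these ingredients in place, Theorem \ref{x-condition} immediately yields that $I^{(k)}$ has linear quotients, and hence is componentwise linear, for every $k\ge1$.

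For part (b), the componentwise linearity from (a) gives $\reg I^{(k)}=\omega(I^{(k)})$. Since polymatroidal ideals have linear powers, $\reg I^k=\omega(I)k=dk$, so it suffices to prove $\omega(I^{(k)})=dk$. The lower bound $\omega(I^{(k)})\ge dk$ follows from Theorem \ref{Thm:reg-kk}(b), applied to the minimal-intersection-type ideal $I_{n,d}$. For the upper bound, I would take $u\in\mathcal{G}(I^{(k)})$ and write $ut^k$ as a product of $S$-algebra generators of $\mathcal{R}_s(I)$, namely $ut^k=\mathbf{x}^{\mathbf{b}}(u_1 t^{q_1})\cdots(u_\ell t^{q_\ell})$ with $q_j\ge1$, $\sum q_j=k$, and each $u_j$ arising from an indecomposable $q_j$-cover. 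Minimality of $u$ forces $\mathbf{b}=\mathbf{0}$: if some $x_i\mid \mathbf{x}^{\mathbf{b}}$, then $u/x_i$ would still lie in $I^{(k)}$. By the description recalled above, each $u_j$ is squarefree of degree $q_j+d-1\le dq_j$, hence $\deg(u)=\sum_{j=1}^{\ell}(q_j+d-1)\le d\sum_{j=1}^{\ell}q_j=dk$. Combining the two bounds yields $\omega(I^{(k)})=dk$ and therefore $\reg I^{(k)}=\reg I^k=dk$.

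There is no significant obstacle, since Theorems \ref{x-condition}, \ref{Veronese:x-condition} and \ref{Thm:reg-kk} do the heavy lifting. The only delicate point is the degree computation in part (b): one must justify that the pure-$S$ factor $\mathbf{x}^{\mathbf{b}}$ in the presentation of $ut^k$ is trivial, which is exactly where the minimality of $u$ in $I^{(k)}$ (rather than just membership) enters.
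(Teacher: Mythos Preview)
Your proof is correct. Part (a) is exactly the paper's argument: check minimal intersection type, the linear cover function $\nu_I(q)=q+(d-1)$ from (\ref{eq:Rees}), and the $x$-condition from Theorem \ref{Veronese:x-condition}, then apply Theorem \ref{x-condition}.

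For part (b) you take a slightly different route than the paper. The paper simply invokes the subsequent Theorem \ref{gendeg}(c), which establishes $\omega(I^{(k)})=dk$ as part of a full description of the generating degrees of $I^{(k)}$. You instead give a direct, self-contained argument for the upper bound $\omega(I^{(k)})\le dk$: factor $ut^k$ through the algebra generators of $\mathcal{R}_s(I)$, kill the $S$-factor by minimality of $u$, and use $q_j+d-1\le dq_j$ termwise. Your approach is lighter, since it avoids the more involved combinatorics of Theorem \ref{gendeg}; the paper's approach has the advantage of reusing a result that is proved anyway for independent interest (the Waldschmidt constant computation). Both are valid, and your degree count is precisely the easy direction of Theorem \ref{gendeg}(c).
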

	\begin{proof}
		(a) Equation (\ref{eq:Rees}) shows that $I$ has linear cover function $\nu_I(k)=k+(d-1)$. Hence, the result follows from Theorem \ref{x-condition} together with Theorem \ref{Veronese:x-condition}.\smallskip
		
		(b) Part (a) together with the subsequent Theorem \ref{gendeg}(c) imply that $\reg\,I^{(k)}=\reg\,I^k=dk$ for all $k\ge1$.
	\end{proof}
	
	The next result presents the generating degrees of each symbolic power of a squarefree Veronese ideal.
	
	\begin{Theorem}\label{gendeg}
		Let $I=I_{n,d}$. Then, for all $k\ge1$
		\begin{enumerate}
			\item[\textup{(a)}] We have
			$$
			I^{(k)}\ =\ \sum_{m=1}^k(\sum_{\substack{s_1+\dots+s_m=k-m\\ 0\le s_1,\dots,s_m\le n-d}}I_{n,d+s_1}\cdots I_{n,d+s_m}).
			$$
			\item[\textup{(b)}] $\beta_{0,\ell}(I^{(k)})\neq 0$ if and only if $\ell=m(d-1)+k$, where $\lceil k/(n-d+1)\rceil\le m\le k$.\smallskip
			\item[\textup{(c)}] $\omega(I^{(k)})=dk$.
		\end{enumerate}
	\end{Theorem}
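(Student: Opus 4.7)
The plan is to derive all three parts from the explicit description (\ref{eq:Rees}) of $\mathcal{R}_s(I)$ together with the intersection presentation (\ref{eq:Ik-mit}); for $I=I_{n,d}$ the latter reads $I^{(k)}=\bigcap_{|F|=n-d+1}P_F^k$, since the minimal primes of $I_{n,d}$ are precisely the $P_F$ with $|F|=n-d+1$.

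For part (a), I would simply read off the degree-$k$ piece of $\mathcal{R}_s(I)$ from (\ref{eq:Rees}): a monomial $w\in S$ satisfies $wt^k\in\mathcal{R}_s(I)$ if and only if $w$ is a multiple of some product $\prod_{j=1}^m x_{I_j}$ of squarefree monomials with $|I_j|=d+s_j-1$, $1\le s_j\le n-d+1$, and $\sum_j s_j=k$. Re-indexing $s_j\mapsto s_j+1$ puts this in the form stated in (a); the reverse inclusion is clear, since pigeonhole gives $x_{I_j}\in I^{(s_j+1)}$, so the product lies in $I^{(k)}$.

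For part (b), (a) implies that every minimal generator of $I^{(k)}$ equals such a product $w=\prod_{j=1}^m x_{I_j}$ of degree $\sum_j(d+s_j)=m(d-1)+k$; any extra monomial factor could be divided off while staying in $I^{(k)}$. The constraints $0\le s_j\le n-d$ and $\sum_j s_j=k-m$ then force $m\in[\lceil k/(n-d+1)\rceil,k]$. To realize each such $m$ by an actual minimal generator, I would fix any valid tuple $(s_j)$ and set $I_j=\{1,2,\dots,d+s_j\}$; since all $I_j$ contain $[d]$, one has $\deg_{x_i}(w)=m$ for every $i\in[d]$. By (\ref{eq:Ik-mit}), $w$ is a minimal generator iff for each $i\in\supp(w)$ there exists an $(n-d+1)$-subset $F\ni i$ with $\sum_{\ell\in F}\deg_{x_\ell}(w)=k$, equivalently a $(d-1)$-subset $G=[n]\setminus F$ avoiding $i$ with $\sum_{\ell\in G}\deg_{x_\ell}(w)=m(d-1)$. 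Choosing $G\subset[d]$ of size $d-1$ with $i\notin G$ works for every $i\in\supp(w)$: take $G=[d]\setminus\{i\}$ if $i\le d$, and $G=\{1,\dots,d-1\}$ if $i>d$.

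Part (c) is then immediate: the set of generating degrees of $I^{(k)}$ is $\{m(d-1)+k:m\in[\lceil k/(n-d+1)\rceil,k]\}$, whose maximum is attained at $m=k$, giving $\omega(I^{(k)})=k(d-1)+k=dk$. The only delicate step is the minimality verification in (b); it succeeds precisely because the uniform value $\deg_{x_i}(w)=m$ on $[d]$ matches the deficit $\deg(w)-k=m(d-1)$ exactly when one removes any $d-1$ of the first $d$ variables.
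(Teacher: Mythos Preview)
Your proof is correct. Parts (a), (c), and the forward direction of (b) follow the same line as the paper (your pigeonhole remark for the reverse inclusion in (a) is in fact more explicit than what the paper writes).

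The genuine difference is in the \emph{reverse} direction of (b), where one must exhibit an actual minimal generator of degree $m(d-1)+k$. Both you and the paper construct the same candidate $w=\prod_{j=1}^m\prod_{i=1}^{d+s_j}x_i$, but the minimality arguments diverge. The paper argues by contradiction: if some strict divisor $v$ of $w$ still lies in $I^{(k)}$, then $v\in I_{n,d+s_1'}\cdots I_{n,d+s_p'}$ for some $p<m$, and a careful computation with the explicit exponent vector of $w$ (splitting the sum $\deg(v)$ at the index $d+s_{m-p+1}$ and telescoping) forces $p\ge m$, a contradiction. Your route is more direct: using $I^{(k)}=\bigcap_{|F|=n-d+1}P_F^k$, you observe that $w\in\mathcal{G}(I^{(k)})$ iff for each $i\in\supp(w)$ some complementary $(d-1)$-set $G\not\ni i$ carries total $w$-degree exactly $m(d-1)$; since $\deg_{x_\ell}(w)=m$ for all $\ell\in[d]$, any $(d-1)$-subset of $[d]$ avoiding $i$ does the job. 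This exploits the primary decomposition criterion cleanly and avoids the exponent bookkeeping the paper needs; the paper's argument, on the other hand, stays entirely within the product description from (a) and never invokes the intersection $\bigcap P_F^k$.
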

	\begin{proof}
		(a) From equation~(\ref{eq:Rees}) it can be seen that $$I^{(k)}t^k\ =\ \sum I_{n,d+s_1}\cdots I_{n,d+s_m}t^{(s_1+1)+\cdots +(s_m+1)},$$ 
		where the sum is taken over all integers $1\leq m\leq k$ and all integers $s_i\geq 0$ such that $I_{n,d+s_i}\neq (0)$ and $(s_1+1)+\cdots +(s_m+1)=k$. Thus $d+s_i\leq n$ for all $i$ and $s_1+\dots+s_m=k-m$. 
		
		(b) Suppose that $\beta_{0,\ell}(I^{(k)})\neq 0$. From (a) we know that $\ell$ is the degree of a monomial in $I_{n,d+s_1}\cdots I_{n,d+s_m}$, for some integers $1\leq m\leq k$ and $0\leq s_1,\ldots,s_m\leq n-d$ with $s_1+\dots+s_m=k-m$. Therefore, $\ell=\sum_{i=1}^m (d+s_i)=md+(k-m)=m(d-1)+k$. Since $s_i\leq n-d$ for all $i$, we obtain $k-m=\sum_{i=1}^m s_i\leq m(n-d)$, which implies that $m\geq \lceil k/(n-d+1)\rceil$.
		
		Conversely, let $\lceil k/(n-d+1)\rceil\le m\le k$ be an integer and let $\ell=m(d-1)+k$.
		The inequality $k/(n-d+1)\leq m$ implies that $k-m\leq m(n-d)$. Therefore, there exist integers $s_1,\ldots,s_m$ such that $0\leq s_i\leq n-d$ for all $i$ and $s_1+\dots+s_m=k-m$. For such integers $s_i$ we have $I_{n,d+s_i}\neq (0)$ and $(0)\neq I_{n,d+s_1}\cdots I_{n,d+s_m}\subset I^{(k)}$. We may assume that $s_1\le\cdots\le s_m$. Set $u_{\ell}=\prod_{i=1}^{d+s_{\ell}}x_i$ for $1\leq \ell\leq m$. Then we have $u=u_1u_2\cdots u_m\in I_{n,d+s_1}\cdots I_{n,d+s_m}\subset I^{(k)}$ with 
		$\deg(u)=md+\sum_{i=1}^m s_i=md+(k-m)=m(d-1)+k=\ell$.
		Hence, in order to show that $\beta_{0,\ell}(I^{(k)})\neq 0$, it is enough to show that $u$ is a minimal generator of $I^{(k)}$.
		Suppose by contradiction that this is not the case, and a monomial $v\in I^{(k)}$ strictly divides $u$. From (a) we know that $v\in I_{n,d+s'_1}\cdots I_{n,d+s'_p}$ for integers $1\le p\le k$ and $s'_i\ge 0$ with   
		$s'_1+\dots+s'_p=k-p$. Moreover, $p(d-1)+k=\deg(v)<\deg(u)=m(d-1)+k$. Therefore, $p<m$. 
		Let $u=x_1^{a_1}\cdots x_n^{a_n}$ and $v=x_1^{b_1}\cdots x_n^{b_n}$. We have $b_i\le a_i$  and $b_i\le p$ for all $i$, since $v$ is the product of $p$ squarefree monomials. So
		\begin{equation}\label{eq:sump}
			\begin{aligned}
				p(d-1)+k\ =\ \deg(v)\ &= \sum_{i=1}^{d+s_{m-p+1}}b_i+\sum_{i=d+s_{m-p+1}+1}^{n}b_i \\
				& \le\ p(d+s_{m-p+1})+\sum_{i=d+s_{m-p+1}+1}^{n}b_i.
			\end{aligned}
		\end{equation}
		Notice that 
		\begin{equation*}\label{eq:u}
			u=(\prod_{i=1}^{d+s_1}x_i^m)(\prod_{i=d+s_1+1}^{d+s_2}x_i^{m-1})\cdots(\prod_{i=d+s_{m-2}+1}^{d+s_{m-1}}x_i^2)(\prod_{i=d+s_{m-1}+1}^{d+s_m}x_i).
		\end{equation*}
		Hence, $a_i=0$ for $i>d+s_m$ and $a_i=q$, for any $d+s_{m-q}+1\leq i\leq d+s_{m-q+1}$, where $1\leq q\leq m$ and $s_0=-d$. Therefore,
		\begin{equation}\label{eq:p2}
			\begin{aligned}
				\sum_{i=d+s_{m-p+1}+1}^{n}b_i\ &\le\ \sum_{i=d+s_{m-p+1}+1}^{n}a_i\\
				&\le\ (p-1)(s_{m-p+2}-s_{m-p+1})+\cdots+3(s_{m-2}-s_{m-3})\\
				&\phantom{aaaa}+2(s_{m-1}-s_{m-2})+(s_m-s_{m-1})\\
				&=\ \sum_{\ell=1}^{p-1}\ell(s_{m-\ell+1}-s_{m-\ell})=\sum_{\ell=0}^{p-2}s_{m-\ell}-(p-1)s_{m-p+1}. 
			\end{aligned}
		\end{equation} 
		From equations (\ref{eq:sump}) and (\ref{eq:p2}) we obtain
		$$p(d-1)+k\le p(d+s_{m-p+1})+\sum_{\ell=0}^{p-2}s_{m-\ell}-(p-1)s_{m-p+1}=pd+\sum_{\ell=0}^{p-1}s_{m-\ell}.$$
		This implies that $k-p\le \sum_{\ell=0}^{p-1}s_{m-\ell}\le s_1+\cdots+s_m=k-m$. Hence, we get $p\ge m$, which is a contradiction. 
		
		(c) follows from (b).
	\end{proof}
	
	As a corollary of Theorem~\ref{gendeg} we obtain the least degree of a generator of $I_{n,d}^{(k)}$ and recover \cite[Theorem 7.5]{BC} on the Waldschmidt constant of squarefree Veronese ideals. Recall that for a homogeneous ideal $I\subset S$,  the {\em Waldschmidt constant} of $I$ is defined as $\widehat{\alpha}(I)=\lim_{k\to \infty} \alpha(I^{(k)})/k$. 
	
	\begin{Corollary}
		Let $I=I_{n,d}$. Then $\alpha(I^{(k)})=(d-1)\lceil k/(n-d+1)\rceil+k$. In particular,  $\widehat{\alpha}(I)=n/(n-d+1)$.    
	\end{Corollary}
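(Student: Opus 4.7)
The plan is to derive this directly from Theorem \ref{gendeg}(b), which gives an exact characterization of the generating degrees of $I^{(k)}$. By definition, $\alpha(I^{(k)})$ is the smallest integer $\ell$ such that $\beta_{0,\ell}(I^{(k)}) \ne 0$. According to Theorem \ref{gendeg}(b), such $\ell$ are precisely the integers of the form $\ell = m(d-1) + k$ where $m$ ranges over the interval $\lceil k/(n-d+1)\rceil \le m \le k$. Since $d\ge 1$, the function $m\mapsto m(d-1)+k$ is non-decreasing in $m$, so the minimum is attained at $m=\lceil k/(n-d+1)\rceil$, yielding
$$
\alpha(I^{(k)}) \;=\; (d-1)\Bigl\lceil \frac{k}{n-d+1}\Bigr\rceil + k.
$$
(If $d=1$ then $I=\m$ and the formula reduces correctly to $\alpha(I^{(k)})=k$.)

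For the Waldschmidt constant, I would then compute
$$
\widehat{\alpha}(I)\;=\;\lim_{k\to\infty}\frac{\alpha(I^{(k)})}{k}\;=\;\lim_{k\to\infty}\left((d-1)\cdot\frac{\lceil k/(n-d+1)\rceil}{k}+1\right).
$$
Using the elementary fact that $\lceil k/(n-d+1)\rceil/k \to 1/(n-d+1)$ as $k\to\infty$, this limit equals
$$
\frac{d-1}{n-d+1}+1\;=\;\frac{(d-1)+(n-d+1)}{n-d+1}\;=\;\frac{n}{n-d+1},
$$
as claimed.

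There is no real obstacle here: both assertions follow immediately once Theorem \ref{gendeg}(b) is in hand, since the first is just the minimization of a linear function over an explicit range of integers, and the second is a routine limit computation. The only small point to watch is the edge case $d=1$ (where $n-d+1=n$ and the formula still gives the correct value $\alpha(\m^{(k)})=k$ and $\widehat{\alpha}(\m)=1$), but this requires no separate argument.
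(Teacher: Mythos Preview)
Your proposal is correct and follows exactly the approach the paper intends: the paper states this result as an immediate corollary of Theorem~\ref{gendeg} without supplying a proof, and your derivation---minimizing $m(d-1)+k$ over the range of $m$ given in Theorem~\ref{gendeg}(b) and then passing to the limit---is the natural (and only reasonable) way to extract the statement from that theorem.
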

	
	\label{notcp}Notice that in general $I_{n,d}^{(k)}$  is not componentwise polymatroidal. Indeed, consider the ideal $I_{4,3}=(x_1x_2x_3,\:x_1x_2x_4,\:x_1x_3x_4,\:x_2x_3x_4)$. Then,
	$$
	I_{4,3}^{(2)}=(x_1x_2x_3x_4,\:x_1^2x_2^2x_3^2,\:x_1^2x_2^2x_4^2,\:x_1^2x_3^2x_4^2,\:x_2^2x_3^2x_4^2)
	$$
	is not componentwise polymatroidal. In fact, we have
	\begin{align*}
		L=(I_{4,3}^{(2)})_{\langle6\rangle}\ = \ (&x_{1}^{3}x_{2}x_{3}x_{4},\:x_{1}^{2}x_{2}^{2}x_{3}^{2},\:x_{1}^{2}x_{2}^{2}x_{3}x_{4},\:x_{1}^{2}x_{2}^{2}x_{4}^{2},\:x_{1}^{2}x_{2}x_{3}^{2}
		x_{4},\:x_{1}^{2}x_{2}x_{3}x_{4}^{2},\\
		&\:x_{1}^{2}x_{3}^{2}x_{4}^{2},\:x_{1}x_{2}^{3}x_{3}x_{4},\:x_{1}x_{2}^{2}x_{3}^{2}x_{4},\:x_{1}x_{2}^{2}x_{3}x_{
			4}^{2},\:x_{1}x_{2}x_{3}^{3}x_{4},\\&\:x_{1}x_{2}x_{3}^{2}x_{4}^{2},\:x_{1}x_{2}x_{3}x_{4}^{3},\:x_{2}^{2}x_{3}^{2}x_{4}^{2}),
	\end{align*}
	where the minimal monomial generators of $L$ are ordered according to the lex order $>_{\lex}$ induced by $x_1>x_2>x_3>x_4$. Notice that $L$ does not have linear quotients with respect to such order. Indeed, $(x_{1}^{3}x_{2}x_{3}x_{4}):(x_{1}^{2}x_{2}^{2}x_{3}^{2})=(x_1x_4)$ is not generated by variables. Hence $L$ is not polymatroidal by \cite[Theorem 2.4]{BR}.\smallskip
	
	The next result shows that $I_{2,n}^{(k)}$ is a componentwise polymatroidal ideal. Notice that $I_{2,d}$ may be viewed as an edge ideal.
	
	Let $G$ be a finite simple graph on the vertex set $V(G)=\{x_1,\dots,x_n\}$ with the edge set $E(G)$. The \textit{complementary graph} of $G$ is the graph $G^c$ with the vertex set $V(G^c)=V(G)$ and the edge set $E(G^c)=\{\{x_i,x_j\}:\ i\ne j,\,\{x_i,x_j\}\notin E(G)\}$.
	
	\begin{Proposition}\label{Prop:ndcp}
		Let $I=I_{n,d}$. If $d\in\{1,2,n\}$, then $I^{(k)}$ is componentwise polymatroidal for all $k\ge1$.
	\end{Proposition}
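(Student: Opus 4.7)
The plan is to treat the three cases $d=1$, $d=n$ and $d=2$ separately. In each case the strategy is the same: describe $I^{(k)}$ explicitly and then identify each graded component as a polymatroidal ideal of a familiar type (a Veronese, a product of polymatroidal ideals, or a Veronese-type ideal).

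The cases $d=1$ and $d=n$ are essentially immediate. For $d=1$, $I_{n,1}=\m$ is prime, so $I^{(k)}=\m^k$; for each $\ell\ge k$ the component $(\m^k)_{\langle \ell\rangle}=\m^\ell$ is the $\ell$-th Veronese ideal, which is polymatroidal. For $d=n$, the ideal $I_{n,n}=(x_1\cdots x_n)$ is principal, its associated primes are $(x_1),\dots,(x_n)$, and a straightforward localization computation gives $I^{(k)}=\bigcap_{i=1}^n(x_i^k)=((x_1\cdots x_n)^k)=I^k$. Hence for $\ell\ge nk$ the component $(I^{(k)})_{\langle \ell\rangle}=(x_1^k\cdots x_n^k)\cdot\m^{\ell-nk}$ is polymatroidal as the product of two polymatroidal ideals.

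The main case is $d=2$. Here $I_{n,2}$ is the edge ideal of the complete graph $K_n$, whose minimal vertex covers are the $(n-1)$-subsets of $[n]$, giving the minimal primes $P_i=(x_j:j\ne i)$ for $i=1,\dots,n$. Since $I_{n,2}$ is matroidal (in particular of minimal intersection type), equation~(\ref{eq:symbolicP'}) combined with $(P_i)^{(k)}=P_i^k$ yields
$$I_{n,2}^{(k)}\;=\;\bigcap_{i=1}^n\,(x_j:j\ne i)^k.$$
A monomial $u=x_1^{a_1}\cdots x_n^{a_n}$ belongs to $(x_j:j\ne i)^k$ if and only if $\sum_{j\ne i}a_j\ge k$, equivalently $a_i\le \deg(u)-k$. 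Therefore $u\in I_{n,2}^{(k)}$ precisely when $a_i\le \deg(u)-k$ for every $i$.

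It follows that $(I_{n,2}^{(k)})_{\langle \ell\rangle}$ is generated by all monomials of degree $\ell$ whose exponents are at most $\ell-k$, i.e.\ it is an ideal of Veronese type. Since ideals of Veronese type are polymatroidal via the exchange property, this completes the proof. No serious obstacle is foreseen: the key step is the explicit description of $I_{n,2}^{(k)}$ obtained above and the recognition of each graded component as a Veronese-type ideal.
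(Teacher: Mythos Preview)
Your proof is correct, and for $d=2$ it takes a genuinely different route from the paper. The paper argues that $I_{n,2}=I(K_n)$ with $K_n^c$ a block graph, then invokes \cite[Theorem 2.3(a)]{FMR} to obtain linear quotients of $(I^{(k)})_{\langle j\rangle}$ with respect to the lex order for every ordering of the variables, and concludes polymatroidality via the Bandari--Rahmati-Asghar characterization \cite[Theorem 2.4]{BR}; alternatively it appeals to Francisco--Van Tuyl \cite[Theorem 3.1]{FVT} using $P_i+P_j=\m$. Your argument is more direct and self-contained: from $I_{n,2}^{(k)}=\bigcap_i P_i^k$ you read off that the degree-$\ell$ monomials in $I_{n,2}^{(k)}$ are exactly those with every exponent at most $\ell-k$, so $(I_{n,2}^{(k)})_{\langle\ell\rangle}=I_{n,\ell,(\ell-k,\dots,\ell-k)}$ is a Veronese-type ideal, hence polymatroidal. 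Your approach avoids any external citations and gives an explicit description of each graded component; the paper's approach, on the other hand, situates the result within the broader framework of edge ideals whose complement is chordal (or a block graph), which is what one needs for the generalizations pursued there. For $d=1$ and $d=n$ the two proofs are essentially the same.
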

	\begin{proof}
		For $d=1$ we have $I=\m$ and so $I^{(k)}=I^k=\m^k$ is (componentwise) polymatroidal for all $k\ge1$. Similarly, for $d=n$ we have $I=(x_1\cdots x_n)$ and so $I^{(k)}=I^k=(x_1^kx_2^k\cdots x_n^k)$ is (componentwise) polymatroidal for all $k\ge1$. Finally, for $d=2$ we have $I=I(G)$, where $G$ is the complete graph on the vertex set $\{x_1,\dots,x_n\}$. Notice that $G^c$ is the graph on the vertex set $\{x_1,\dots,x_n\}$ whose all vertices are isolated. Hence $G^c$ is a block graph and any ordering of the variables $x_1,\dots,x_n$ is a perfect elimination order of $G^c$ in the sense of Dirac \cite{Dirac}. Then, the proof of \cite[Theorem 2.3(a)]{FMR} shows that $(I^{(k)})_{\langle j\rangle}$ has linear quotients with respect to the lex order induced by any ordering of the variables $x_1,\dots,x_n$, for any $j$. Hence, by a result of Bandari and Rahmati-Asghar \cite[Theorem 2.4]{BR} it follows that $(I^{(k)})_{\langle j\rangle}$ is polymatroidal, for all $j$ and $k$. Hence, $I^{(k)}$ is componentwise polymatroidal. Alternatively, notice that $I_{n,2}^{(k)}=\bigcap_{i=1}^nP_i^k$ for all $k\ge1$, where $P_i=P_{[n]\setminus\{i\}}$. Since $P_i+P_j=\m$ for $i\ne j$, by a result of Francisco and Van Tuyl \cite[Theorem 3.1]{FVT} it follows that $I_{n,2}^{(k)}$ is componentwise polymatroidal for all $k\ge1$.
	\end{proof}
	
	We expect that the converse of Proposition \ref{Prop:ndcp} holds as well.\smallskip
	
	As was shown in the proof of Theorem~\ref{Veronese:x-condition}, the initial ideal of the defining ideal $J$ of $\mathcal{R}_s(I_{n,d})$ is generated by monomials of the form $v$ and $x_{\ell}v'$ with $v,v'\in K[y_1,\ldots,y_m]$ such that $\deg(v')\leq 2$. In general $\ini_<(J)$ may not be a quadratic monomial ideal. For instance, consider $I=I_{4,3}$. Let $T=K[x_1,\dots,x_4,y_1,\dots,y_5]$ and let $\varphi:T\rightarrow\mathcal{R}_s(I)$ be the $S$-algebra map defined by setting $\varphi(y_1)=x_1x_2x_3x_4t^2$, $\varphi(y_2)=x_1x_2x_3t$, $\varphi(y_3)=x_1x_2x_4t$, $\varphi(y_4)=x_1x_3x_4t$ and $\varphi(y_5)=x_2x_3x_4t$. \textit{Macaulay2} \cite{GDS} shows that
	$$
	\ini_<(\Ker\,\varphi)=(x_{1}y_{5}^{2},\,x_{2}y_{4},\,x_{3}y_{3},\,x_{4}y_{2},\,y_{2}y_{3},\,y_{2}y_{4},\,y_{2}y_{5},\,y_{3}y_{4},\,y_{3}y_{5},\,y_{4}y_{5})
	$$
	with respect to the order (\ref{orderT}).
	Nonetheless, in the next proposition we show that $\ini_<(J)$ is generated in degree at most three.  
	
	\begin{Proposition}\label{Prop:ini3}
		Let $I=I_{n,d}$, and let $J$ be the defining ideal of $\mathcal{R}_s(I)$. Then the initial ideal $\ini_<(J)$ with respect to the order \textup{(\ref{orderT})} is generated by monomials of the form $x_iy_j$, $y_jy_k$, and $x_iy_jy_k$.
	\end{Proposition}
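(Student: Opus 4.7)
The proof refines the argument of Theorem~\ref{Veronese:x-condition}. By that theorem, every minimal monomial generator $g$ of $\ini_<(J)$ has the form $g = fv$ with $f \in S$ of degree at most $1$ and $v$ a monomial in $K[y_1,\ldots,y_m]$; Proposition~\ref{Prop:ini3} therefore reduces to the bound $\deg v \leq 2$. When $\deg f = 1$, a close reading of the swap constructions in the proof of Theorem~\ref{Veronese:x-condition} shows that the generators produced are only of the forms $x_p y_{i_\ell} y_{i_h}$ (Case~1) or $x_p y_{i_r}$ (Subcase~2.1), while Subcase~2.2 ends in a contradiction. It therefore remains to treat the pure-$y$ case $f = 1$; assume for contradiction that $g = y_{i_1} \cdots y_{i_r}$ with $i_1 \leq \cdots \leq i_r$ (hence $q_{i_1} \geq \cdots \geq q_{i_r}$ by~(\ref{orderY})) and $r \geq 3$ is a minimal generator.

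Suppose first that some pair $\ell < h$ satisfies $u_{i_h} \nmid u_{i_\ell}$, and pick a variable $x_c$ dividing $u_{i_h}$ but not $u_{i_\ell}$. Imitating Case~2 of Theorem~\ref{Veronese:x-condition}, the factorization
\[u_{i_\ell} u_{i_h} = (x_c u_{i_\ell})(u_{i_h}/x_c)\]
produces a binomial $y_{i_\ell} y_{i_h} - z \in J$ whose new $y$-indices satisfy $k < i_\ell$ (from the raised $t$-power $q_{i_\ell}+1$) and $k' > i_h$ (from the lowered $t$-power $q_{i_h}-1$, with the obvious modification when $q_{i_h} = 1$). Since the rightmost index in the joint support is $k' > i_h$, a reverse-lex comparison produces $y_{i_\ell} y_{i_h}$ as the initial term, so $y_{i_\ell} y_{i_h} \in \ini_<(J)$ is a proper divisor of $g$, contradicting minimality.

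We may therefore assume the chain $u_{i_r} \mid u_{i_{r-1}} \mid \cdots \mid u_{i_1}$, and our task becomes to show $g \notin \ini_<(J)$. Given any binomial $b = g - g' y_{j_1} \cdots y_{j_s} \in J$ with $\{y_{j_\ell}\} \cap \{y_{i_1},\ldots,y_{i_r}\} = \emptyset$, bidegree matching on $\varphi(g) = u_{i_1} \cdots u_{i_r} t^{\sum q_{i_\ell}}$ yields $\sum q'_{j_\ell} = \sum q_{i_\ell}$ and $\deg g' = (r-s)(d-1)$. By the chain, every $x \in \supp(u_{i_r})$ lies in each $u_{i_k}$ and has multiplicity $r$ in $\varphi(g)$; squarefreeness of each $v_{j_\ell}$ then forces $\deg_x g' \geq r - s$, hence $u_{i_r}^{r-s} \mid g'$ and $\deg g' \geq (r-s) d_{i_r} \geq (r-s) d$. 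For $s < r$ this contradicts $\deg g' = (r-s)(d-1) < (r-s)d$. For $s = r$ we obtain $g' = 1$ and $v_{j_\ell} = u_{i_r} \tilde v_{j_\ell}$ with $\tilde v_{j_\ell} \neq 1$ (otherwise $y_{j_\ell} = y_{i_r}$, excluded); hence $q'_{j_\ell} > q_{i_r}$, and so $j_\ell < i_r$ for every $\ell$. Since the rightmost index in the joint support of $g$ and $y_{j_1}\cdots y_{j_r}$ is $i_r$, with positive exponent in $g$ and zero in the alternative, the reverse-lex order gives $y_{j_1}\cdots y_{j_r} >_{\rlex} g$; hence $g$ is not the initial term of $b$, and $g \notin \ini_<(J)$, the desired contradiction. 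Therefore $r \leq 2$, completing the proof.

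The conceptual heart of the argument is the chain case: any witnessing binomial forces the alternative $y$-indices to lie strictly below $i_r$, making the alternative reverse-lex larger than $g$ and ruling out $g$ as an initial term. The key technical ingredient is the multiplicity lower bound $u_{i_r}^{r-s} \mid g'$, obtained from the chain structure of the $u_{i_k}$'s together with the squarefreeness of the $v_{j_\ell}$'s; this is what closes off the $s < r$ case, leaving only the purely combinatorial $s = r$ case in which the reverse-lex comparison delivers the contradiction.
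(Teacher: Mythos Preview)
Your proof is correct and follows essentially the same route as the paper's own argument: the non-chain case is disposed of by exhibiting a quadratic $y$-divisor in $\ini_<(J)$, while the chain case is eliminated via the divisibility $u_{i_r}^{r-s}\mid g'$ (forcing $r=s$) followed by the observation that $u_{i_r}\mid v_{j_\ell}$ for all $\ell$, which pushes every $j_\ell$ below $i_r$ and makes the alternative term reverse-lex larger. The only cosmetic difference is that the paper works with a single witnessing binomial throughout, whereas you quantify over all binomials with disjoint $y$-supports; since $J$ is toric this comes to the same thing.
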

	\begin{proof}
		We keep the notation as in the proof of Theorem~\ref{Veronese:x-condition}. Let $fy_{i_1}\cdots y_{i_r}\in\ini_{<}(J)$ be a minimal monomial generator with $i_1\le\dots\le i_r$ and $f\in S$ a monomial. 
		If $f\neq 1$, then the proof of Theorem~\ref{Veronese:x-condition} shows that either $fy_{i_1}\cdots y_{i_r}=x_iy_j$ for some $i$ and $j$ or $fy_{i_1}\cdots y_{i_r}=x_iy_jy_k$ for some $i,j,k$. Now, suppose that $f=1$.
		Then we have a binomial relation of the form
		$$
		b=y_{i_1}\cdots y_{i_r}-gy_{j_1}\cdots y_{j_s}\in J
		$$
		with $g\in S$ a monomial, $j_1\le\dots\le j_s$ and $\ini_{<}(b)=y_{i_1}\cdots y_{i_r}$. By the minimality of $\ini_{<}(b)$ we have $\{i_1,\dots,i_r\}\cap\{j_1,\dots,j_s\}=\emptyset$. We show that $\deg(y_{i_1}\cdots y_{i_r})=2$. 
		
		Let $\varphi(y_{i_\ell})=u_\ell t^{q_\ell}$ for all $\ell=1,\dots,r$ and $\varphi(y_{j_\ell})=v_\ell t^{q_\ell'}$ for all $\ell=1,\dots,s$.
		
		\textbf{Case 1.} Assume that there exist integers $\ell<h$ such that $u_h\nmid u_\ell$. Then, we can pick a variable $x_c$ with $x_c\mid u_h$ and $x_c\nmid u_\ell$.  Then
		$$
		(u_\ell t^{q_\ell})(u_h t^{q_h})=((x_cu_\ell )t^{q_\ell+1})((u_h/x_c)t^{q_h-1}).
		$$
		This equality shows that $y_{i_\ell}y_{i_h}\in\ini_<(J)$. Since $y_{i_\ell}y_{i_h}$ divides $y_{i_1}\cdots y_{i_r}$ and the latter one is a minimal generator of $\ini_<(J)$, we get $y_{i_1}\cdots y_{i_r}=y_{i_\ell}y_{i_h}$, as desired.
		
		\textbf{Case 2.} Suppose that Case 1 does not hold. Then $u_{i+1}\mid u_i$ for all $i=1,\dots,r-1$. We show that this case does not happen. 
		First we show that under this assumption $g=1$. By equation (\ref{eq:Rees}) we have $\deg(u_\ell)=q_\ell+d-1$ for all $1\le \ell\le r$ and 
		$\deg(v_\ell)=q'_\ell+d-1$ for all $1\le \ell\le s$. Moreover, $u_1\cdots u_rt^{q_1+\cdots+q_r}=gv_1\cdots v_st^{q'_1+\cdots+q'_s}$. Hence, $$\sum_{\ell=1}^r (q_\ell+d-1)=\deg(u_1\cdots u_r)=\deg(g)+\deg(v_1\cdots v_s)=\deg(g)+\sum_{\ell=1}^s (q'_\ell+d-1).$$
		From this and the equality $\sum_{\ell=1}^r q_\ell=\sum_{\ell=1}^s q'_\ell$, we obtain $r(d-1)=\deg(g)+s(d-1)$. Notice that since $\ini_{<}(b)=y_{i_1}\cdots y_{i_r}$, we have $r\ge s$. Then $\deg(g)=(r-s)(d-1)$.   
		Since $u_r\mid u_i$ for all $i$, we have $u_r^r\mid u_1\cdots u_r=gv_1\cdots v_s$. From the fact that all $v_i$'s are squarefree, we obtain $u_r^{r-s}\mid g$. Thus $$(r-s)(q_r+d-1)=\deg(u_r^{r-s})\le \deg(g)=(r-s)(d-1).$$ This would imply that $(r-s)q_r\le 0$. Since $q_r>0$ and $r\ge s$, we obtain $r=s$. Thus $\deg(g)=(r-s)(d-1)=0$, which means $g=1$ and $b=y_{i_1}\cdots y_{i_r}-y_{j_1}\cdots y_{j_r}$. Moreover, $u_r^r\mid v_1\cdots v_r$, which implies that $u_r\mid v_i$ for all $i$. In particular, $u_r\mid v_r$. So we conclude that $v_r>_{\lex} u_r$ and $q'_r+(d-1)=\deg(v_r)\ge \deg(u_r)=q_r+(d-1)$. Thus $q'_r\ge q_r$ and by (\ref{orderY}) we have $y_{j_r}>y_{i_r}$. Hence, 
		$y_{j_1}\cdots y_{j_r}>_{\rlex} y_{i_1}\cdots y_{i_r}$. This means that $\ini_{<}(b)=y_{j_1}\cdots y_{j_r}$, a contradiction.
	\end{proof}
	
	We close this section with the following question.
	\begin{Question}\label{Q:quadratic}
		When does the defining ideal of $\mathcal{R}_s(I_{n,d})$ have a quadratic Gr\"obner basis for some monomial order $<$ ?
	\end{Question} 
	
	\section{Matching-matroidal ideals}\label{sec4}
	
	In this section, we consider matroidal ideals attached to matching matroids and we prove Conjectures \ref{ConjA} and \ref{ConjB} for some special families of such ideals.
	
	Let $\mathcal{A}=\{A_1,\dots,A_t\}$ be a finite collection of non-empty subsets of $[n]$. A \textit{transversal} of $\mathcal{A}$ is the image of an injective map $\psi:[t]\rightarrow[n]$ with the property that $\psi(j)\in A_j$ for all $j=1,\dots,t$. It is well-known that the collection of all transversals of $\mathcal{A}$ is the set of bases of a matroid, called a \textit{transversal matroid}.
	Next, we provide a short algebraic proof of this fact.

	Given ${\bf b}=(b_1,\dots,b_n)\in\ZZ_{\ge0}^n$ we put ${\bf x^b}=x_1^{b_1}\cdots x_n^{b_n}$. For a monomial ideal $I\subset S$ and a vector ${\bf a}=(a_1,\dots,a_n)\in\ZZ_{\ge0}^n$, we define the ideal $I^{\le{\bf a}}$ as the monomial ideal generated by all monomials ${\bf x^b}\in I$ such that ${\bf b}\le{\bf a}$, that is $b_i\le a_i$ for all $i$.
	
	\begin{Lemma}\label{Lem:<a}
		Let $I\subset S$ be a polymatroidal ideal. Then $I^{\le\bf a}$ is polymatroidal too.
	\end{Lemma}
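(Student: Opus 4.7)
The plan is to prove this by directly verifying the exchange property for $I^{\le{\bf a}}$ using the exchange property of $I$. Recall that an equigenerated monomial ideal is polymatroidal if and only if for every pair of minimal generators $u,v$ and every $i$ with $\deg_{x_i}(u)>\deg_{x_i}(v)$, there exists $j$ with $\deg_{x_j}(u)<\deg_{x_j}(v)$ such that $x_j(u/x_i)$ is again a minimal generator.

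First I would identify the minimal generating set of $I^{\le{\bf a}}$. Since $I$ is equigenerated, say in degree $d$, any monomial ${\bf x^b}\in I$ with ${\bf b}\le{\bf a}$ is divisible by some $u\in\mathcal{G}(I)$ whose exponent vector is automatically $\le{\bf b}\le{\bf a}$. Hence
\[
\mathcal{G}(I^{\le{\bf a}})\ =\ \{u\in\mathcal{G}(I)\,:\,\deg_{x_k}(u)\le a_k\text{ for all }k\}
\]
and in particular $I^{\le{\bf a}}$ is equigenerated in degree $d$ (the case $I^{\le{\bf a}}=(0)$ is trivial, so we may assume this set is non-empty).

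Next, I would verify the exchange property. Let $u,v\in\mathcal{G}(I^{\le{\bf a}})$ with $\deg_{x_i}(u)>\deg_{x_i}(v)$. Applying the exchange property of $I$, there exists an index $j$ with $\deg_{x_j}(u)<\deg_{x_j}(v)$ and $w:=x_j(u/x_i)\in\mathcal{G}(I)$. It remains to check that $w\in\mathcal{G}(I^{\le{\bf a}})$, i.e.\ that $\deg_{x_k}(w)\le a_k$ for every $k$. For $k\notin\{i,j\}$ we have $\deg_{x_k}(w)=\deg_{x_k}(u)\le a_k$ since $u\in\mathcal{G}(I^{\le{\bf a}})$. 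For $k=i$ we have $\deg_{x_i}(w)=\deg_{x_i}(u)-1\le a_i$. Finally, for $k=j$ the strict inequality $\deg_{x_j}(u)<\deg_{x_j}(v)$ of integers gives $\deg_{x_j}(w)=\deg_{x_j}(u)+1\le\deg_{x_j}(v)\le a_j$, using that $v\in\mathcal{G}(I^{\le{\bf a}})$.

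Thus $w\in\mathcal{G}(I^{\le{\bf a}})$ and the exchange property holds for $I^{\le{\bf a}}$, so it is polymatroidal. There is no real obstacle here; the only point that needs care is the bound on the $j$-th coordinate, but this follows immediately because the exchange index $j$ provided by $I$ satisfies $\deg_{x_j}(u)<\deg_{x_j}(v)$, and the cap ${\bf a}$ is respected by $v$.
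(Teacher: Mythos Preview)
Your proof is correct and follows essentially the same approach as the paper: both arguments verify the exchange property for $I^{\le{\bf a}}$ directly by applying the exchange property of $I$ and then checking that the resulting monomial $x_j(u/x_i)$ still satisfies the bound ${\bf a}$ coordinate by coordinate. Your version is slightly more detailed (you explicitly identify $\mathcal{G}(I^{\le{\bf a}})$ and separate the cases $k=i$ and $k\notin\{i,j\}$, whereas the paper treats all $s\ne j$ at once), but the substance is identical.
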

	\begin{proof}
		Let $u,v\in\mathcal{G}(I^{\le{\bf a}})$ with $\deg_{x_i}(u)>\deg_{x_i}(v)$. Since $u,v\in\mathcal{G}(I)$ and $I$ is polymatroidal, there exists $j$ with $\deg_{x_j}(u)<\deg_{x_j}(v)$ such that $x_j(u/x_i)\in\mathcal{G}(I)$. We claim that $x_j(u/x_i)\in\mathcal{G}(I^{\le{\bf a}})$. Indeed, $\deg_{x_s}(x_j(u/x_i))\le\deg_{x_s}(u)\le a_s$ for all integers $s\ne j$ and $\deg_{x_j}(x_j(u/x_i))=\deg_{x_j}(u)+1\le\deg_{x_j}(v)\le a_j$. This shows that $x_j(u/x_i)\in\mathcal{G}(I^{\le{\bf a}})$. Hence $I^{\le{\bf a}}$ is polymatroidal.
	\end{proof}
	
	As a consequence of Lemma \ref{Lem:<a} we have
	
	\begin{Corollary}\label{Cor:mat-mat}
		Let $\mathcal{A}=\{A_1,\dots,A_t\}$ be a collection of non-empty subsets of $[n]$. The collection of transversals of $\mathcal{A}$ is the set of bases of a matroid.
	\end{Corollary}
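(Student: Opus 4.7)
The plan is to exhibit the transversals of $\mathcal{A}$ as the minimal monomial generators of a suitable matroidal ideal, from which the conclusion is immediate from the very definition of matroidal ideals. The key vehicle is Lemma \ref{Lem:<a}, which transfers polymatroidality to the ``squarefree part'' $I^{\le\mathbf{1}}$ of any polymatroidal ideal $I$, where $\mathbf{1}=(1,\dots,1)\in\ZZ_{\ge0}^n$.

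First I would consider, for each $j=1,\dots,t$, the monomial prime ideal $P_{A_j}=(x_i:\ i\in A_j)$, which is trivially polymatroidal. Since the product of polymatroidal ideals is again polymatroidal, the ideal
\[
I\ =\ P_{A_1}P_{A_2}\cdots P_{A_t}
\]
is polymatroidal, with minimal monomial generators of the form $x_{j_1}x_{j_2}\cdots x_{j_t}$ where $j_\ell\in A_\ell$ for each $\ell$ (possibly with repetitions). By Lemma \ref{Lem:<a}, the ideal $I^{\le\mathbf{1}}$ is then polymatroidal, and it is squarefree by construction; hence $I^{\le\mathbf{1}}$ is matroidal.

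Next I would identify $\mathcal{G}(I^{\le\mathbf{1}})$. A squarefree monomial $u\in S$ lies in $I$ if and only if some minimal generator $v=x_{j_1}\cdots x_{j_t}$ of $I$ divides $u$; because $u$ is squarefree, $v$ must be squarefree as well, which forces the indices $j_1,\dots,j_t$ to be pairwise distinct. In other words, $v$ is precisely the monomial attached to a transversal of $\mathcal{A}$, and conversely every transversal yields such a squarefree $v\in I$. Therefore $\mathcal{G}(I^{\le\mathbf{1}})$ consists of exactly the squarefree monomials whose supports are the transversals of $\mathcal{A}$. Since $I^{\le\mathbf{1}}$ is matroidal, the supports of its minimal generators form the set of bases of a matroid, and this matroid is the asserted transversal matroid.

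The argument is almost entirely structural, resting on Lemma \ref{Lem:<a} together with the stability of polymatroidality under products. There is no real obstacle; the only substantive check is the elementary compatibility between monomial divisibility and squarefreeness used to pin down $\mathcal{G}(I^{\le\mathbf{1}})$.
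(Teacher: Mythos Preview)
Your argument is correct and follows essentially the same approach as the paper: form the transversal polymatroidal ideal $P_{A_1}\cdots P_{A_t}$, apply Lemma \ref{Lem:<a} with ${\bf a}={\bf 1}$ to obtain a matroidal ideal, and then identify its minimal generators with the transversals of $\mathcal{A}$. The only cosmetic difference is notation (the paper calls the product $J$ and its squarefree part $I$), and your identification of $\mathcal{G}(I^{\le{\bf 1}})$ via divisibility is equivalent to the paper's direct description.
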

	\begin{proof}
		Let $J=P_{A_1}\cdots P_{A_t}$. Since each $P_{A_i}$ is polymatroidal, $J$ is polymatroidal too. Let $I=J^{\le{\bf 1}}$ where ${\bf 1}=(1,1,\dots,1)$. By Lemma \ref{Lem:<a} it follows that $I$ is a matroidal ideal. Let $B$ be the set of the transversals of $\mathcal{A}$. We claim that the set consisting of the supports of the minimal monomial generators of $I$ coincides with $B$. This will imply the assertion.
		
		A minimal monomial generator of $J$ is of the form $u=x_{i_1}\cdots x_{i_t}$ with $i_j\in A_j$ for all $j=1,\dots,t$. This monomial $u$ belongs to $\mathcal{G}(I)$ if and only if $u$ is squarefree. That is, if and only if $i_j\ne i_\ell$ for all $1\le i<j\le t$. So the map $\psi_u:[t]\rightarrow[n]$ defined by $\psi_u(j)=i_j$ is injective and $\psi_u(j)\in A_j$ for all $j$. Hence $\Im\,\psi=\supp(u)\in B$. Conversely, let $\{i_1',\dots,i_t'\}\in B$ be a transversal. Then $x_{i_j'}\in P_{A_j}$ for all $j$ and the monomial $u'=x_{i_1'}\cdots x_{i_t'}$ is squarefree. This shows that $u'\in\mathcal{G}(I)$, as desired.
	\end{proof}
	
	Next, we provide a different description, due to Edmonds and Fulkerson, of the matroids considered in this section.
	
	Let $G$ be a finite simple graph on the vertex set $[n]$ with the edge set $E(G)$. A {\em $k$-matching} of $G$ is a collection of $k$ edges of $G$ which are pairwise disjoint. The \textit{matching number} of $G$, denoted by $\nu(G)$, is the maximum size of a matching of $G$.

	We say that a subset $B\subseteq [n]$ {\em meets} a matching $M$, if $B\subseteq \bigcup_{e\in M} e$. Now, let $A\subseteq[n]$ be  non-empty. Then
	$$
	\{B:\ B\subseteq A,\ B \textup{ meets}\ M \textup{ for some}\ \textup{matching}\ M\ \textup{of}\ G\}
	$$
	is the collection of independent sets of a matroid. We denote this matroid by $\Match(G;A)$ and call it a \textit{matching-matroid}. This fact was proved by Edmonds and Fulkerson (see \cite[Theorem 1 on page 246]{W}).
	
	When $A=[n]$, we denote $\Match(G;[n])$ simply by $\Match(G)$. An algebraic proof that $\Match(G)$ is a matroid can be found for instance in \cite[Theorem 1.1]{EFnote}. See also \cite[Theorem 1.7]{EF}, \cite[Theorem 1.8]{FM} and \cite[Theorem 4.3]{HF} for some related algebraic results.
	
	We say that a squarefree monomial ideal $I$ is \textit{matching-matroidal} if the supports of the minimal monomial generators of $I$ are the bases of a matching-matroid.
	
	A deep theorem of Edmonds and Fulkerson shows that a matroid is transversal if and only if it is a matching-matroid (see \cite[Theorem 2 on page 248]{W}). Let ${\bf 1}=(1,1,\dots,1)$. For a monomial ideal $I\subset S$, we call $I^{\le{\bf1}}$ the \textit{squarefree part} of $I$.
	
	Using the theorem of Edmonds and Fulkerson combined with Corollary \ref{Cor:mat-mat} we obtain the following algebraic characterization. 
	\begin{Theorem}\label{Thm:mm}
		Let $I\subset S$ be a monomial ideal. Then $I$ is matching-matroidal if and only if $I=(P_1\cdots P_t)^{\le{\bf 1}}$ for some monomial prime ideals $P_1,\dots,P_t$.
	\end{Theorem}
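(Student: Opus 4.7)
My plan is to derive Theorem \ref{Thm:mm} by combining the algebraic description of transversal matroids provided by Corollary \ref{Cor:mat-mat} with the theorem of Edmonds and Fulkerson (cited immediately before the statement) which asserts that a matroid is transversal if and only if it is a matching matroid. The key observation that allows this bridge is that every monomial prime ideal of $S=K[x_1,\dots,x_n]$ has the form $P_A=(x_i:i\in A)$ for some non-empty $A\subseteq[n]$, so products $P_1\cdots P_t$ correspond bijectively to finite collections $\mathcal{A}=\{A_1,\dots,A_t\}$ of non-empty subsets of $[n]$.

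For the direction $(\Leftarrow)$, I would assume $I=(P_1\cdots P_t)^{\le{\bf 1}}$ and write $P_i=P_{A_i}$ with $A_i\subseteq[n]$ non-empty. By Corollary \ref{Cor:mat-mat}, the supports of the minimal monomial generators of $I$ are exactly the transversals of $\mathcal{A}=\{A_1,\dots,A_t\}$, hence form the set of bases of the transversal matroid $M(\mathcal{A})$. Applying the Edmonds--Fulkerson theorem, $M(\mathcal{A})$ equals a matching matroid $\Match(G;A)$ for some graph $G$ and some $A\subseteq[n]$, so $I$ is matching-matroidal.

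For the direction $(\Rightarrow)$, I would let $I$ be matching-matroidal, so that the supports of the minimal generators of $I$ coincide with the bases of some matching matroid $\Match(G;A)$. By the Edmonds--Fulkerson theorem this matroid is also transversal: there exists a collection $\mathcal{A}=\{A_1,\dots,A_t\}$ of non-empty subsets of $[n]$ whose transversals are precisely these bases. Setting $P_i=P_{A_i}$ and invoking Corollary \ref{Cor:mat-mat} again, the supports of the minimal monomial generators of $(P_1\cdots P_t)^{\le{\bf 1}}$ are also exactly the transversals of $\mathcal{A}$. Since a squarefree monomial ideal is determined by the supports of its minimal generators, I conclude $I=(P_1\cdots P_t)^{\le{\bf 1}}$.

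The proof is essentially a dictionary translation, so there is no genuine technical obstacle; the one point requiring care is checking that two squarefree monomial ideals whose minimal generators have the same supports must coincide (immediate because such generators are the squarefree monomials $\prod_{i\in B}x_i$ for $B$ in the common basis family), and that the $A$ appearing in $\Match(G;A)$ may be safely restricted to the union of the generators' supports without loss of generality. Once these trivial verifications are made, the equivalence follows by combining Corollary \ref{Cor:mat-mat} with the Edmonds--Fulkerson characterization.
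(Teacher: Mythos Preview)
Your proposal is correct and takes essentially the same approach as the paper, which simply states (in the sentence preceding the theorem) that the result follows by combining the Edmonds--Fulkerson theorem with Corollary~\ref{Cor:mat-mat}. Your write-up merely unpacks this one-line justification with the appropriate bookkeeping about ground sets and the bijection between squarefree monomial ideals and families of supports.
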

	
	At the moment we are not able to prove Conjectures \ref{ConjA} and \ref{ConjB} for all matching-matroidal ideals. Therefore, in what follows we restrict our attention to a special but wide family of matching-matroidal ideals.

	Given a non-empty subset $A$ of $[n]$ we denote by $I_{A,d}$ the squarefree Veronese ideal of degree $d$ in the polynomial ring $S_A=K[x_i:\ i\in A]$.
	
	Let $\mathcal{A}=\{A_1,\dots,A_t\}$ be a finite collection of non-empty subsets of $[n]$. We say that $\mathcal{A}$ is of \textit{Veronese type}, if whenever we have $A_i\cap A_j\ne\emptyset$ then $A_i=A_j$. Let $I$ be the matching-matroidal ideal attached to a set $\mathcal{A}$ of Veronese type, that is $I=(P_{A_1}\cdots P_{A_t})^{\le{\bf1}}$. We say that $I$ is a \textit{matching-matroidal ideal of Veronese type}. Notice that for such an ideal $I$ we have $I=(P_{B_1}^{d_1}\cdots P_{B_m}^{d_m})^{\le{\bf 1}}$, where $B_i\cap B_j=\emptyset$ for all $i\ne j$. Since the ideals $P_{B_i}^{d_i}$ have pairwise disjoint supports and $(P_{B_i}^{d_i})^{\le{\bf 1}}=I_{B_i,d_i}$ we obtain that
	$$
	I=(P_{B_1}^{d_1}\cdots P_{B_m}^{d_m})^{\le{\bf 1}}=(P_{B_1}^{d_1})^{\le{\bf 1}}\cdots(P_{B_m}^{d_m})^{\le{\bf 1}}=I_{B_1,d_1}\cdots I_{B_m,d_m}.
	$$
	
	Hence, we have shown
	\begin{Proposition}\label{Prop:mat-mat}
		A monomial ideal $I$ is a matching-matroidal ideal of Veronese type if and only if $I$ is the product of squarefree Veronese ideals with pairwise disjoint supports.
	\end{Proposition}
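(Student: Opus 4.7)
The plan is to prove both implications by passing between two descriptions via Theorem~\ref{Thm:mm} and a squarefree-part-of-product identity. The crucial observation is that when monomial ideals $J_1,\dots,J_m$ have pairwise disjoint supports, the squarefree part commutes with their product, i.e.\
$$
(J_1 J_2\cdots J_m)^{\le\mathbf{1}}\ =\ J_1^{\le\mathbf{1}}\,J_2^{\le\mathbf{1}}\cdots J_m^{\le\mathbf{1}}.
$$
I would record this as a preliminary observation: any monomial $u\in\mathcal{G}(J_1\cdots J_m)$ factors uniquely as $u=u_1\cdots u_m$ with $u_i\in\mathcal{G}(J_i)$ whose supports lie in disjoint sets of variables, so $u\le\mathbf{1}$ if and only if each $u_i\le\mathbf{1}$. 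Applying this with $J_i=P_{B_i}^{d_i}$ and using $(P_{B_i}^{d_i})^{\le\mathbf{1}}=I_{B_i,d_i}$ gives the key identity
$$
(P_{B_1}^{d_1}\cdots P_{B_m}^{d_m})^{\le\mathbf{1}}\ =\ I_{B_1,d_1}\cdots I_{B_m,d_m}
$$
whenever the $B_i$ are pairwise disjoint.

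For the forward direction, assume $I$ is matching-matroidal of Veronese type. By Theorem~\ref{Thm:mm} and the definition of Veronese type, $I=(P_{A_1}\cdots P_{A_t})^{\le\mathbf{1}}$ for a collection $\mathcal{A}=\{A_1,\dots,A_t\}$ such that $A_i\cap A_j\ne\emptyset$ forces $A_i=A_j$. Grouping equal members of $\mathcal{A}$, write the distinct sets as $B_1,\dots,B_m$ with multiplicities $d_1,\dots,d_m$; by the Veronese-type condition the $B_i$ are pairwise disjoint. Then $I=(P_{B_1}^{d_1}\cdots P_{B_m}^{d_m})^{\le\mathbf{1}}$, and the identity above gives $I=I_{B_1,d_1}\cdots I_{B_m,d_m}$, a product of squarefree Veronese ideals with pairwise disjoint supports.

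For the converse, suppose $I=I_{B_1,d_1}\cdots I_{B_m,d_m}$ with the $B_i$ pairwise disjoint. Using the identity backwards, $I=(P_{B_1}^{d_1}\cdots P_{B_m}^{d_m})^{\le\mathbf{1}}$. Define $\mathcal{A}$ to consist of each $B_i$ repeated $d_i$ times; since $B_i\cap B_j=\emptyset$ for $i\ne j$, the collection $\mathcal{A}$ is of Veronese type, and $P_{A_1}\cdots P_{A_t}=P_{B_1}^{d_1}\cdots P_{B_m}^{d_m}$. Theorem~\ref{Thm:mm} then identifies $I$ as matching-matroidal, and by construction it is of Veronese type.

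The routine calculations are straightforward; the only point that deserves care is the squarefree-part-of-product identity over disjoint supports, which is where I would be most explicit, since the whole argument pivots on it. Everything else is bookkeeping using Theorem~\ref{Thm:mm} and the definition of Veronese type.
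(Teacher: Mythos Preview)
Your proposal is correct and follows essentially the same route as the paper: group the $A_i$ into distinct sets $B_1,\dots,B_m$ with multiplicities $d_i$, use that the squarefree part commutes with products of ideals on disjoint supports, and invoke $(P_{B_i}^{d_i})^{\le\mathbf{1}}=I_{B_i,d_i}$. The paper proves only the forward direction explicitly and states the proposition as a consequence, whereas you spell out the converse as well; also, the form $I=(P_{A_1}\cdots P_{A_t})^{\le\mathbf{1}}$ is taken as the \emph{definition} of a matching-matroidal ideal of Veronese type in the paper, so invoking Theorem~\ref{Thm:mm} is not strictly necessary.
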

	
	The following remark follows easily from the definition of symbolic powers.
	
	\begin{Remark}\label{Rem}
		Let $I_1,\dots,I_t\subset S$ be monomial ideals with pairwise disjoint supports. Then $(I_1\cdots I_t)^{(k)}=I_1^{(k)}\cdots I_t^{(k)}$ for all $k\ge1$.
	\end{Remark}

	Now, we verify Conjectures \ref{ConjA}, \ref{ConjB} for matching-matroidal ideals of Veronese type.
	\begin{Theorem}\label{Thm:mat-mat}
		Let $I\subset S$ be a matching-matroidal ideal of Veronese type. Then, $I^{(k)}$ has linear quotients and $\reg\,I^{(k)}=\reg\,I^k$ for all $k\ge1$.
	\end{Theorem}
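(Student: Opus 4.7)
The plan is to reduce this theorem to the case of a single squarefree Veronese ideal, which is already handled by Corollary~\ref{Cor:Veronese}, together with the observation that disjointness of supports makes everything multiplicative.

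First, by Proposition~\ref{Prop:mat-mat}, we may write $I=I_{B_1,d_1}\cdots I_{B_m,d_m}$ with $B_i\cap B_j=\emptyset$ for $i\neq j$. Since the ideals $I_{B_i,d_i}$ have pairwise disjoint supports, Remark~\ref{Rem} gives
\begin{equation*}
I^{(k)}\ =\ I_{B_1,d_1}^{(k)}\cdots I_{B_m,d_m}^{(k)}
\end{equation*}
for all $k\ge 1$. By Corollary~\ref{Cor:Veronese}(a) each factor $I_{B_i,d_i}^{(k)}$ has linear quotients. Hence the statement on linear quotients reduces to the following general lemma that I would state and prove separately: \emph{if $J_1,\dots,J_m$ are monomial ideals in pairwise disjoint sets of variables, each having linear quotients, then so does the product $J_1\cdots J_m$.}

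To prove this lemma, fix orderings $u_{i,1},\dots,u_{i,s_i}$ of $\mathcal{G}(J_i)$ giving linear quotients. Because the supports of the $J_i$ are disjoint, there are no cancellations in the product, so the minimal generating set of $J_1\cdots J_m$ is exactly $\{u_{1,j_1}\cdots u_{m,j_m}\}$, indexed by tuples $(j_1,\dots,j_m)$. Order these tuples lexicographically. Given two such products $u=u_{1,j_1}\cdots u_{m,j_m}>u'=u_{1,j_1'}\cdots u_{m,j_m'}$, let $\ell$ be the first coordinate at which they differ, so $j_\ell'<j_\ell$ and $j_i=j_i'$ for $i<\ell$. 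Using disjointness of supports, $u':u$ factors as $(u_{\ell,j_\ell'}:u_{\ell,j_\ell})\cdot\prod_{i>\ell}(u_{i,j_i'}:u_{i,j_i})$, and $u_{\ell,j_\ell'}:u_{\ell,j_\ell}$ lies in the colon $(u_{\ell,1},\dots,u_{\ell,j_\ell-1}):u_{\ell,j_\ell}$, which is generated by variables. So we obtain a variable $x_p\mid u':u$ and an index $j''<j_\ell$ with $x_p u_{\ell,j_\ell}=u_{\ell,j''}\cdot(\text{mon.})$. Replacing the $\ell$th factor in $u$ by $u_{\ell,j''}$ produces a strictly smaller generator $u''$ of $J_1\cdots J_m$ with $x_p u=(\text{mon.})\cdot u''$. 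Thus $x_p\in(\text{earlier generators}):u$, which gives linear quotients.

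Finally, for the regularity equality, since each $I_{B_i,d_i}^{(k)}$ has linear quotients, $\reg I_{B_i,d_i}^{(k)}=\omega(I_{B_i,d_i}^{(k)})=d_ik$ by Corollary~\ref{Cor:Veronese}(b). Because $I^{(k)}$ also has linear quotients, $\reg I^{(k)}=\omega(I^{(k)})$, and using the disjoint supports,
\begin{equation*}
\omega(I^{(k)})\ =\ \sum_{i=1}^m\omega(I_{B_i,d_i}^{(k)})\ =\ \sum_{i=1}^m d_ik\ =\ \omega(I)k.
\end{equation*}
Since $I$ is polymatroidal, it has linear powers, so $\reg I^k=\omega(I^k)=\omega(I)k$. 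Combining these yields $\reg I^{(k)}=\reg I^k$, completing the proof. The main obstacle is the lemma on products of linear-quotient ideals with disjoint supports; once that is in hand, everything else is routine bookkeeping with $\omega$.
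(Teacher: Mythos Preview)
Your proof is correct and follows essentially the same approach as the paper. The lemma you isolate and prove---that a product of linear-quotient monomial ideals with pairwise disjoint supports again has linear quotients---is precisely Lemma~\ref{Lem:I-dis}(b), and your lexicographic ordering on tuples is the natural $m$-factor generalization of the paper's two-factor argument. For the regularity statement, the paper invokes Lemma~\ref{Lem:I-dis}(c), which uses additivity of regularity under products with disjoint supports, whereas you route through $\omega$ using that linear quotients forces $\reg=\omega$; the two arguments are equivalent and equally short.
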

	
	For the proof of this theorem, we need the following lemma.
	
	\begin{Lemma}\label{Lem:I-dis}
		Let $I_1,\dots,I_t\subset S$ be monomial ideals with pairwise disjoint supports. Then the following statements hold.
		\begin{enumerate}
			\item[(a)] Suppose that $I_j$ is componentwise linear for all $j=1,\dots,t$. Then $I_1\cdots I_t$ is componentwise linear.
			\item[(b)] Suppose that $I_j$ have linear quotients for all $j=1,\dots,t$. Then $I_1\cdots I_t$ has linear quotients.
			\item[(c)] Suppose that $\reg\,I_j^{(k)}=a_jk$ for all $j=1,\dots,t$ and all $k\ge1$. Then $\reg\,(I_1\cdots I_t)^{(k)}=(a_1+\dots+a_t)k$ for all $k\ge 1$.
		\end{enumerate}
	\end{Lemma}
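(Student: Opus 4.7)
The plan is to reduce all three parts to the case $t=2$ by a trivial induction (splitting off one factor at a time). Throughout, set $I=I_1$, $J=I_2$ with $\supp(I)\cap\supp(J)=\emptyset$, $S_1=K[x_i:i\in\supp(I)]$, $S_2=K[x_i:i\in\supp(J)]$; after a harmless extension of scalars one may assume $S=S_1\otimes_K S_2$. The central structural fact is that disjoint supports force $\gcd(u,v)=1$ for every $u\in\mathcal{G}(I)$ and $v\in\mathcal{G}(J)$, hence $IJ=I\cap J$ and $\mathcal{G}(IJ)=\{uv:u\in\mathcal{G}(I),\,v\in\mathcal{G}(J)\}$.

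For part (b), fix linear quotients orders $u_1,\dots,u_r$ on $\mathcal{G}(I)$ and $v_1,\dots,v_s$ on $\mathcal{G}(J)$, and order the generators $u_iv_j$ of $IJ$ lexicographically by the pair $(i,j)$. Disjoint supports give
\[(u_{i'}v_{j'}):(u_iv_j)=(u_{i'}:u_i)\cdot(v_{j'}:v_j).\]
For $(i',j')<_{\lex}(i,j)$ one of two cases occurs. If $i'<i$, then linear quotients of $I$ yield a variable $x_\ell=u_{i''}:u_i$ with $i''<i$ that divides $u_{i'}:u_i$. If $i'=i$ and $j'<j$, the symmetric argument applied to $J$ yields a variable $x_\ell=v_{j''}:v_j$ with $j''<j$ dividing $v_{j'}:v_j$. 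In either case the chosen $x_\ell$ lies in the earlier colon ideal $(u_{i''}v_j):(u_iv_j)$ or $(u_iv_{j''}):(u_iv_j)$ and divides $(u_{i'}v_{j'}):(u_iv_j)$; this realizes linear quotients for $IJ$.

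For part (a), the tool is the Mayer--Vietoris short exact sequence
\[0\to IJ=I\cap J\to I\oplus J\to I+J\to 0.\]
Disjoint supports give $S/(I+J)\cong S_1/I\otimes_K S_2/J$, and the K\"unneth formula then expresses the bigraded Betti numbers of $S/(I+J)$ as a convolution of those of $S_1/I$ and $S_2/J$. The crux is to show that $\Tor_i^S(K,I\oplus J)\to\Tor_i^S(K,I+J)$ is injective in every internal degree; disjointness identifies each direct summand on the left with a distinct K\"unneth summand on the right. Once this is settled, the long exact sequence collapses to
\[\beta_{i,j}(IJ)=\beta_{i+1,j}(I+J)-\beta_{i+1,j}(I)-\beta_{i+1,j}(J),\]
and the componentwise linearity of $I$ and $J$ then forces every nonzero $\beta_{i,j}(IJ)$ to satisfy $j-i=d_1+d_2$ for generating degrees $d_k$ of $I_k$, i.e., for a generating degree of $IJ$. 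By the standard vanishing criterion for componentwise linear ideals, $IJ$ is componentwise linear. I expect the injectivity claim and the subsequent bigraded bookkeeping to be the main obstacle.

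For part (c), Remark~\ref{Rem} gives $(I_1\cdots I_t)^{(k)}=I_1^{(k)}\cdots I_t^{(k)}$, and these factors still have pairwise disjoint supports. Additivity of regularity for products of monomial ideals with disjoint supports (a consequence of the same Mayer--Vietoris plus K\"unneth setup applied to arbitrary factors in place of $I,J$) then yields $\reg(I_1^{(k)}\cdots I_t^{(k)})=\sum_{j=1}^t\reg I_j^{(k)}=\sum_{j=1}^t a_jk=(a_1+\cdots+a_t)k$, as claimed.
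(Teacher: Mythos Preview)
Your arguments for parts (b) and (c) are correct and essentially match the paper's: both order $\mathcal{G}(IJ)$ lexicographically on pairs for (b), and both invoke Remark~\ref{Rem} together with additivity of regularity for disjoint-support products for (c).

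Part (a), however, has a genuine gap, and it is not the injectivity step you flagged. The problem is the final sentence: the ``standard vanishing criterion'' you invoke, namely that $\beta_{i,j}(IJ)\neq 0$ only when $j-i$ is a generating degree of $IJ$, is \emph{not} sufficient for componentwise linearity. A concrete counterexample is $L=(x^2,\,y^2,\,xyz)\subset K[x,y,z]$: its Betti table is $\beta_{0,2}=2$, $\beta_{0,3}=1$, $\beta_{1,4}=3$, $\beta_{2,5}=1$, so every nonzero $\beta_{i,j}$ satisfies $j-i\in\{2,3\}$, the set of generating degrees; yet $L_{\langle 2\rangle}=(x^2,y^2)$ does not have a $2$-linear resolution, so $L$ is not componentwise linear. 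Componentwise linearity is genuinely not a Betti-table condition, and no amount of bookkeeping after your Mayer--Vietoris computation will recover it.

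The paper avoids this by working directly with the components: it writes $(IJ)_{\langle d\rangle}=\sum_{i=0}^d I_{\langle d-i\rangle}J_{\langle i\rangle}$, sets $H_j=\sum_{i=0}^j I_{\langle d-i\rangle}J_{\langle i\rangle}$, and shows by finite induction that each $H_j$ has a $d$-linear resolution. The inductive step uses that $H_j=H_{j-1}+I_{\langle d-j\rangle}J_{\langle j\rangle}$ is a Betti splitting \cite{FHT}, reducing the question to the linearity of the intersection $H_{j-1}\cap I_{\langle d-j\rangle}J_{\langle j\rangle}$, which is computed explicitly as $I_{\langle d-j+1\rangle}J_{\langle j\rangle}$ using the chain of inclusions $I_{\langle d\rangle}\subset\cdots\subset I_{\langle d-j\rangle}$ and $J_{\langle 0\rangle}\supset\cdots\supset J_{\langle j\rangle}$. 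If you want to salvage your tensor-product viewpoint, the correct route is to observe $IJ\cong I\otimes_K J$ as $S$-modules, so the minimal free resolution of $IJ$ is the tensor product of those of $I$ and $J$; then one must invoke the theorem that a module is componentwise linear if and only if its minimal resolution decomposes as a direct sum of its linear strands (as complexes, not just termwise), and check that tensor products preserve this decomposition. That is a valid alternative, but it is not the numerical criterion you stated.
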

	\begin{proof}
		To prove the statements (a), (b) and (c), proceeding by induction on $t$, it is enough to prove the case $t=2$, with the base case $t=1$ being trivial. Therefore, it is enough to consider two monomial ideals $I,J$ of $S$ with $\supp(I)\cap\supp(J)=\emptyset$.
		
		(a) It is clear that for all $d$ we have
		$$
		(IJ)_{\langle d\rangle}\ =\ \sum_{i+j=d}I_{\langle i\rangle}J_{\langle j\rangle}\ =\ \sum_{i=0}^dI_{\langle d-i\rangle}J_{\langle i\rangle}.
		$$
		We claim that for all $j=0,\dots,d$ the ideal $H_j=\sum_{i=0}^j I_{\langle d-i\rangle}J_{\langle i\rangle}$ has a $d$-linear resolution. Since $H_d=(IJ)_{\langle d\rangle}$ this will yield the assertion. To prove our claim we proceed by finite induction. For $j=0$, $H_0=I_{\langle d\rangle}$ has $d$-linear resolution by assumption. Now let $j>0$. Notice that $H_j=H_{j-1}+I_{\langle d-j\rangle}J_{\langle j\rangle}$. By induction $H_{j-1}$ has $d$-linear resolution. Furthermore $I_{\langle d-j\rangle}J_{\langle j\rangle}$ has $d$-linear resolution for it is the product of monomial ideals with disjoint supports having $(d-j)$-linear and $j$-linear resolution, respectively. It follows from \cite[Corollary 2.4]{FHT} that $H_j=H_{j-1}+I_{\langle d-j\rangle}J_{\langle j\rangle}$ is a Betti splitting. Now, using \cite[Proposition 1.8]{CFts1} we have that $H_{j}$ has $d$-linear resolution if and only if $H_{j-1}\cap I_{\langle d-j\rangle}J_{\langle j\rangle}$ has a $(d+1)$-linear resolution. Notice that $I_{\langle d\rangle}\subset I_{\langle d-1\rangle}\subset\cdots\subset I_{\langle d-j\rangle}$ and $J_{\langle 0\rangle}\supset J_{\langle 1\rangle}\supset\cdots\supset J_{\langle j\rangle}$. Taking into account that the sum of monomial ideals distributes over intersections, we obtain that
		\begin{align*}
			H_{j-1}\cap I_{\langle d-j\rangle}J_{\langle j\rangle}\ &=\ (\sum_{i=0}^{j-1}I_{\langle d-i\rangle}J_{\langle i\rangle})\cap I_{\langle d-j\rangle}J_{\langle j\rangle}\ =\ \sum_{i=0}^{j-1}[(I_{\langle d-i\rangle}J_{\langle i\rangle})\cap(I_{\langle d-j\rangle}J_{\langle j\rangle})]\\
			&=\ \phantom{,}\sum_{i=0}^{j-1}[(I_{\langle d-i\rangle}\cap I_{\langle d-j\rangle})(J_{\langle i\rangle}\cap J_{\langle j\rangle})]\ =\ \sum_{i=0}^{j-1}I_{\langle d-i\rangle}J_{\langle j\rangle}\\
			&=\ (\sum_{i=0}^{j-1} I_{\langle d-i\rangle})J_{\langle j\rangle}\ =\ I_{\langle d-j+1\rangle}J_{\langle j\rangle}
		\end{align*}
		has $(d+1)$-linear resolution. Hence $(IJ)_{\langle d\rangle}$ has $d$-linear resolution for all $d$, as desired.\smallskip
		
		(b) Let $u_1,\dots,u_m$ and $v_1,\dots,v_\ell$ be linear quotients orders of $I$ and $J$, respectively. Since $\supp(I)\cap\supp(J)=\emptyset$, we have $\mathcal{G}(IJ)=\{u_iv_j:\ i=1,\dots,m,j=1,\dots,\ell\}$. We claim that
		$$
		u_1v_1,u_2v_1,\dots,u_mv_1,\, u_1v_2,\dots,u_mv_2,\,\dots,\,u_1v_\ell,\dots,u_mv_\ell
		$$
		is a linear quotients order of $IJ$. To this end, consider two generators $u_iv_j>u_pv_q$. Then, either $j<q$ or $j=q$ and $i<p$.
		
		Suppose that $j<q$. Since $J$ has linear quotients, there exists $r<q$ such that $v_r:v_q=x_s$ divides $v_j:v_q$. Since $u_iv_j:u_pv_q=(u_i:u_p)(v_j:v_q)$ it follows that $x_s$ divides $u_iv_j:u_pv_q$ too. Consider the monomial $u_pv_r$. Notice that $u_pv_r>u_pv_q$ because $r<q$. Moreover $u_pv_r:u_pv_q=v_r:v_q=x_s$ divides $u_iv_j:u_pv_q$, as desired.
		
		Suppose now $j=q$, then $u_iv_j:u_pv_q=u_i:u_p$. Since $I$ has linear quotients, there exists $r<p$ such that $u_r:u_p=x_s$ divides $u_i:u_p$. Then $u_rv_j>u_pv_j$ and $u_rv_j:u_pv_j=x_s$ divides $u_iv_j:u_pv_q=u_i:u_p$, as desired.\smallskip
		
		(c) By Remark \ref{Rem} we have $(IJ)^{(k)}=I^{(k)}J^{(k)}$ for all $k\ge1$. By the assumption, $\reg\,I^{(k)}=ak$ and $\reg\,J^{(k)}=bk$ for all $k\ge1$. Since $\supp(I^{(k)})\cap\supp(J^{(k)})=\emptyset$, we have $\reg\,(IJ)^{(k)}=\reg\,I^{(k)}J^{(k)}=\reg\,I^{(k)}+\reg\,J^{(k)}=(a+b)k$ for all $k\ge1$.
	\end{proof}
	
	Now, we are in the position to prove Theorem \ref{Thm:mat-mat}.
	\begin{proof}[Proof of Theorem \ref{Thm:mat-mat}]
		By Proposition \ref{Prop:mat-mat}, $I=I_{B_1,d_1}\cdots I_{B_m,d_m}$ with $B_i\cap B_j=\emptyset$ for all $1\le i<j\le m$. Using Remark \ref{Rem} we have $I^{(k)}=I_{B_1,d_1}^{(k)}\cdots I_{B_m,d_m}^{(k)}$ for all $k\ge1$ with $\supp(I_{B_i,d_i}^{(k)})\cap\supp(I_{B_j,d_j}^{(k)})=\emptyset$ for all $1\le i<j\le m$. The assertion now follows by combining Lemma \ref{Lem:I-dis}(b)-(c) with Corollary \ref{Cor:Veronese}.
	\end{proof}
	
	\section{Comparison between ordinary and symbolic powers}\label{sec5}
	
	Let $I\subset S$ be a monomial ideal. Notice that $I^k\subseteq I^{(k)}$. However, in general $I^{(k)}$ is a much larger ideal compared to $I^k$. In this section, we aim to classify the polymatroidal ideals $I$ for which $I^{(k)}=I^k$ for all $k\ge1$.\smallskip
	
	Firstly, we consider the squarefree case, i.e., $I$ is matroidal.
	
	Let $I\subset S$ be a squarefree monomial ideal. We say that $I$ is \textit{K\"onig} if $I$ contains a monomial regular sequence of length $\height(I)$. We say that $I$ is \textit{packed} if every ideal obtained from $I$ by setting a (possibly empty) subset of the variables equal to $0$ and a disjoint (possibly empty) subset of the variables equal to $1$ is K\"onig.
	
	A famous conjecture posed by Conforti and Cornu\'ejols \cite{CC} has been restated equivalently in algebraic terms by Gitler, Villarreal and others \cite{GRV,GVV} as follows.
	
	\begin{Conj}\label{Conj:CC}
		A squarefree monomial ideal $I\subset S$ satisfies $I^{(k)}=I^k$ for all $k\ge1$ if and only if $I$ is packed.
	\end{Conj}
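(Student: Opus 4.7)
The plan is to prove the two implications separately, since one direction is classical while the other is the notoriously open core of the Conforti--Cornu\'ejols problem. In both directions I would reduce to smaller instances using minor operations: setting a variable $x_i$ equal to $0$ corresponds to the monomial localization $I(P)$ at $P=(x_j:j\ne i)$, while setting $x_i$ equal to $1$ corresponds to the image of $I$ under the specialization $S\to K[x_j:j\ne i]$ sending $x_i\mapsto 1$. Both operations send squarefree monomial ideals to squarefree monomial ideals, and one readily checks that they preserve packedness by definition and preserve the equality $I^{(k)}=I^k$ as well (the first because $I^{(k)}=\bigcap_{P\in\Ass(I)}(I(P))^k$, the second because evaluation at $x_i=1$ commutes with intersections and products of squarefree monomial ideals).

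For the direction $I^{(k)}=I^k$ for all $k\Longrightarrow I$ packed, which is the classically known one, the preservation step above reduces the problem to showing that if all symbolic powers of $I$ equal the ordinary powers then $I$ itself is K\"onig, i.e., contains a monomial regular sequence of length $\height(I)$. Using the polyhedral translation, the equality $I^{(k)}=I^k$ for all $k$ is equivalent to the set-covering polyhedron of the clutter of $I$ being integral, and integrality yields, via linear-programming duality applied to the minimum vertex cover / maximum matching pair, the existence of a regular sequence of the required length. Combined with the minor-preservation observation, every minor of $I$ is then K\"onig, hence $I$ is packed.

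The reverse direction, $I$ packed $\Longrightarrow I^{(k)}=I^k$ for all $k$, is the widely open heart of the conjecture, and it is exactly here that I expect the main obstacle to lie. The natural strategy is to induct on the number of variables or on $|\mathcal{G}(I)|$: for a proper minor obtained by localizing at an associated prime $P$, the minor is packed in fewer variables and by induction its symbolic and ordinary powers agree, which controls the local components of $I^{(k)}$ at every non-maximal $P\in\Ass(I)$. The genuine difficulty concentrates at the maximal-height associated primes: one must decompose an arbitrary monomial of $I^{(k)}$ explicitly as a product of $k$ minimal generators of $I$, using only that a single monomial regular sequence of length $\height(I)$ lives inside $I$. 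Strategies I would attempt include: showing that the Rees algebra $\mathcal{R}(I)$ is normally torsion-free by verifying the integer decomposition property for the polyhedron $\{\mathbf{a}\in\mathbb{R}_{\ge0}^n:\sum_{i\in F}a_i\ge w(F)\text{ for all facets }F\}$; constructing an $x$-condition style Gr\"obner basis for the defining ideal of $\mathcal{R}(I)$ in the spirit of Section~\ref{sec2}; or producing, for each indecomposable $k$-cover, a combinatorial replacement procedure that rewrites it as a sum of $k$ $1$-covers by iteratively applying the K\"onig property in an appropriate link. Each of these attacks is known to succeed in special classes (edge ideals, and, as will be established in this paper, matroidal ideals) but none is known to work in general, which is precisely what keeps the Conforti--Cornu\'ejols conjecture open.
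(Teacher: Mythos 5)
The statement you were asked to prove is the Conforti--Cornu\'ejols conjecture, which the paper records precisely as an \emph{open} conjecture: it proves neither implication in general, states the implication ``$I^{(k)}=I^k$ for all $k$ implies $I$ packed'' as the well-known Proposition~\ref{Prop:Pack} without proof, and its actual contribution towards Conjecture~\ref{Conj:CC} is Theorem~\ref{Thm:I(k)k-sq}, which settles the conjecture for matroidal ideals via the inductive splitting $I=x_iI_1+I_2$ of Lemma~\ref{Lem:againP}, the dual exchange property, and a reduction to edge ideals of complete multipartite graphs. Your proposal correctly identifies this state of affairs and honestly declines to claim the open direction, which is the right call; your list of candidate strategies (normality of the Rees algebra via integer decomposition, an $x$-condition Gr\"obner basis, combinatorial rewriting of indecomposable covers) is a reasonable survey of approaches that work in special cases but are not known to work in general.

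There is, however, a concrete error in your sketch of the known direction. You assert that $I^{(k)}=I^k$ for all $k$ is equivalent to integrality of the set-covering polyhedron of the clutter of $I$, and that this integrality yields the K\"onig property via LP duality. Integrality of the covering polyhedron (idealness of the clutter) is strictly weaker than what you need: the clutter $Q_6$ whose vertices are the six edges of $K_4$ and whose members are the four triangles is ideal, yet its minimum vertex cover has size $2$ while its maximum matching has size $1$, so K\"onig fails. Idealness only guarantees that the covering LP has an integral optimum; K\"onig additionally requires an integral optimum of the dual packing LP. The correct translation is that $I^{(k)}=I^k$ for all $k\ge1$ is equivalent to the max-flow min-cut property of the clutter (integrality of optimal solutions of both the covering LP and its dual for every nonnegative integral objective), and it is MFMC --- not mere idealness --- that is preserved under minors and yields K\"onig for $I$ and all of its minors. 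With that substitution your argument for the direction of Proposition~\ref{Prop:Pack} becomes the standard one; as written, the step ``integrality yields the K\"onig property'' would fail.
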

	
	This conjecture is sometimes referred as the \textit{Packing problem}. It is known that Conjecture \ref{Conj:CC} holds for any squarefree monomial ideal generated in degree two, that is for any edge ideal, see \cite[Proposition 4.27]{GRV}. Moreover, the following implication is well-known.
	
	\begin{Proposition}\label{Prop:Pack}
		Let $I\subset S$ be a squarefree monomial ideal. Suppose that $I^{(k)}=I^k$ for all $k\ge1$. Then $I$ is packed.
	\end{Proposition}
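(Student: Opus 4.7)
My plan is to prove Proposition~\ref{Prop:Pack} by first reducing to showing that $I$ itself is K\"onig, and then using the hypothesis at a single well-chosen power to exhibit a matching of the required size.

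For the reduction step, I would verify that both operations defining packedness, namely setting some variables $\{x_i:i\in A\}$ to $0$ and a disjoint set $\{x_j:j\in B\}$ to $1$, commute with ordinary and symbolic powers of squarefree monomial ideals. Setting the variables in $B$ to $1$ is monomial localization at the prime $P_{[n]\setminus B}$, and commutation with symbolic powers is immediate from equation (\ref{eq:symbolicP'}); it clearly commutes with ordinary powers as well. Setting the variables in $A$ to $0$ corresponds to passing to the quotient $S/(x_i:i\in A)$, and for squarefree ideals the minimal primes of the image are precisely the images of those minimal primes of $I$ that contain $A$, so $I^{(k)}$ and $I^k$ both descend in the same manner. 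Hence if $I^{(k)}=I^k$ for all $k$, then the same identity holds for every ideal obtained by packing operations, and it suffices to show that $I$ itself is K\"onig.

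For the K\"onig step, set $h=\height(I)$ and $V=\supp(I)$. The squarefree monomial $\mu=\prod_{i\in V}x_i$ lies in $I^{(h)}$: indeed, by (\ref{eq:symbolicP'}) and the absence of embedded primes, $I^{(h)}=\bigcap_F P_F^h$ where $F$ ranges over the supports of the minimal primes of $I$, and each such $F$ satisfies $F\subseteq V$ and $|F|\ge h$, so $\sum_{i\in F}\deg_{x_i}(\mu)=|F|\ge h$. Invoking $I^{(h)}=I^h$, there exist (not necessarily distinct) minimal generators $u_1,\dots,u_h\in\mathcal{G}(I)$ with $u_1\cdots u_h\mid\mu$. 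Since $\mu$ is squarefree, the supports $\supp(u_1),\dots,\supp(u_h)$ must be pairwise disjoint, producing a monomial regular sequence in $I$ of length $h=\height(I)$, which is precisely the K\"onig property.

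The main technical obstacle lies in the reduction step, specifically in showing that quotienting by a variable preserves the equality $I^{(k)}=I^k$; this requires careful tracking of minimal primes under the quotient map, made manageable by squarefreeness and the resulting absence of embedded primes. By contrast, the K\"onig step is brief once one identifies the correct witness $\mu=\prod_{i\in V}x_i$ and observes that divisibility by a squarefree monomial forces any factorisation into squarefree pieces to have pairwise disjoint supports.
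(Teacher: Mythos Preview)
The paper does not supply a proof of Proposition~\ref{Prop:Pack}, instead citing it as well-known (with implicit reference to \cite{GRV,GVV}). Your overall two-step strategy---reduce to the K\"onig property of $I$ itself by showing the hypothesis is minor-closed, then exhibit a matching by factoring $\mu=\prod_{i\in\supp(I)}x_i$ inside $I^h$---is the standard one, and your K\"onig step is correct and cleanly argued.

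However, your reduction step contains an error. The assertion that ``the minimal primes of the image are precisely the images of those minimal primes of $I$ that contain $A$'' is false. Take $I=(x_1x_2x_3,\,x_1x_4)$, so that $\Min(I)=\{(x_1),(x_2,x_4),(x_3,x_4)\}$, and set $A=\{2\}$. Then $\bar I=(x_1x_4)$ has minimal primes $(x_1)$ and $(x_4)$, yet the only minimal prime of $I$ containing $x_2$ is $(x_2,x_4)$, whose image is $(x_4)$; the prime $(x_1)$ is not accounted for by your description.

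The conclusion you need---that $I^{(k)}=I^k$ for all $k$ implies $\bar I^{(k)}=\bar I^k$ for all $k$ after setting the variables in $A$ to zero---is nevertheless true, and here is a clean fix. For any $P_F\in\Min(I)$, every generator $u\in\mathcal G(\bar I)$ is a generator of $I$ with $\supp(u)\cap A=\emptyset$, and $\supp(u)\cap F\ne\emptyset$ forces $\supp(u)\cap(F\setminus A)\ne\emptyset$; thus $P_{F\setminus A}$ (viewed in $\bar S$) contains $\bar I$, hence contains some $\bar Q\in\Min(\bar I)$. Given a monomial $w\in\bar I^{(k)}$, we then have $w\in\bar Q^k\subseteq P_{F\setminus A}^k\subseteq P_F^k$ in $S$ for every $F$, whence $w\in I^{(k)}=I^k$. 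Since no variable from $A$ divides $w$, any factorization $u_1\cdots u_k\mid w$ with $u_j\in\mathcal G(I)$ satisfies $\supp(u_j)\cap A=\emptyset$, so $w\in\bar I^k$. With this correction the proof goes through.
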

	
	Following \cite{MK} we say that a (squarefree) monomial ideal $I\subset S$ is \textit{vertex splittable} if $I$ can be obtained by the following recursive procedure.
	\begin{itemize}
		\item[(i)] If $u$ is a monomial and $I=(u)$, $I=(0)$ or $I=S$, then $I$ is vertex splittable.\smallskip
		\item[(ii)] If there is a variable $x_i$ and vertex splittable ideals $I_1$ and $I_2$ such that $I=x_iI_1+I_2$, $i\notin\supp(I_2)$, $I_2\subset I_1$ and $\mathcal{G}(I)$ is the disjoint union of $\mathcal{G}(x_iI_1)$ and $\mathcal{G}(I_2)$, then $I$ is vertex splittable.
	\end{itemize}
	
	The following result was shown in \cite[Proposition 2]{CF2024}.
	\begin{Proposition}
		$($Componentwise$)$ polymatroidal ideals are vertex splittable.
	\end{Proposition}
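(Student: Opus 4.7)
The plan is to argue by strong induction on $|\mathcal{G}(I)|$. The base case is $|\mathcal{G}(I)|\le 1$, i.e.\ $I$ is principal, which is vertex splittable by definition. For the inductive step, choose a variable $x_i\in\supp(I)$ that divides some but not all minimal generators of $I$; if no such variable exists, then some $x_j$ divides every generator and writing $I=x_j\cdot I'$ reduces the problem to an ideal $I'$ with strictly fewer variables that is still (componentwise) polymatroidal, to which the induction applies. Once $x_i$ is chosen, set
\[
I_1 := (u/x_i : u\in\mathcal{G}(I),\, x_i\mid u),\qquad I_2 := (u : u\in\mathcal{G}(I),\, x_i\nmid u),
\]
so that $I = x_iI_1+I_2$, $i\notin\supp(I_2)$, and $\mathcal{G}(I) = \mathcal{G}(x_iI_1)\sqcup\mathcal{G}(I_2)$ all hold by construction. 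It remains to prove (a) that $I_1$ and $I_2$ are again (componentwise) polymatroidal and (b) that $I_2\subset I_1$; the inductive hypothesis then finishes the proof, since $I_1$ and $I_2$ both have strictly fewer minimal generators than $I$.

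For (a) in the polymatroidal case, the ideal $I_2$ inherits the exchange property because any swap between two of its generators, all of which have $x_i$-degree zero, cannot involve the index $i$ and hence remains inside $\mathcal{G}(I_2)$. For $I_1$, applying the exchange property of $I$ to the paired generators $x_iu_1,x_iu_2\in\mathcal{G}(I)$ and case-splitting on whether the produced exchange index equals $i$ yields in every case the required swap inside $\mathcal{G}(I_1)$. For (b) in the polymatroidal case, I would invoke the symmetric exchange property for polymatroidal ideals — a well-known strengthening of the one-sided exchange recorded in the excerpt — applied to any $u\in\mathcal{G}(I)$ with $x_i\mid u$ and any $v\in\mathcal{G}(I_2)$: since $\deg_{x_i}(u)>0=\deg_{x_i}(v)$, symmetric exchange produces an index $j$ with $x_i(v/x_j)\in\mathcal{G}(I)$, whence $v/x_j\in\mathcal{G}(I_1)$ and $v\in I_1$.

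For the componentwise polymatroidal case the same scheme is carried out degree by degree inside the polymatroidal components $I_{\langle d\rangle}$. One should choose $x_i$ to lie in the support of $I_{\langle d_0\rangle}$, where $d_0$ is the smallest generating degree of $I$, in order to guarantee enough $x_i$-divisible low-degree generators. Given $v\in\mathcal{G}(I_2)$ of degree $d\ge d_0$, I would first produce a minimal generator $w$ of $I_{\langle d\rangle}$ divisible by $x_i$ by padding an $x_i$-divisible $u\in\mathcal{G}(I)$ of degree $\le d$ to the monomial $u\cdot x_i^{d-\deg(u)}\in I_{\langle d\rangle}$ and taking any minimal divisor $w$ of it; a short $x_i$-degree count forces $x_i\mid w$. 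Symmetric exchange inside the polymatroidal ideal $I_{\langle d\rangle}$, applied to $w$ and $v$, then furnishes $j$ with $x_i(v/x_j)\in I$, and a minimality argument on $\mathcal{G}(I)$ (ruling out the possibility that the $\mathcal{G}(I)$-divisor used is not divisible by $x_i$, on degree grounds) pins down $v/x_j\in I_1$.

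The main obstacle is twofold: (i) importing the symmetric exchange property, which is standard for polymatroids but genuinely stronger than the one-sided version stated in the excerpt and therefore needs to be quoted or re-derived as a preliminary lemma; and (ii) in the componentwise case, the degree-matching step bridging $\mathcal{G}(I)$ and $\mathcal{G}(I_{\langle d\rangle})$, whose generator sets do not coincide in general — one must be careful to land in $I_1$ rather than merely in $I:x_i$. With these two ingredients in place the induction proceeds without further complications.
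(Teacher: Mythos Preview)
The paper does not prove this proposition; it simply cites \cite[Proposition~2]{CF2024}, so there is no in-paper argument to compare against directly. That said, the decomposition you set up in the equigenerated case is exactly Lemma~\ref{Lem:againP} below (imported there from \cite{F2023,BH2013}), and your verification of (a) and (b) via the ordinary and dual exchange properties---the latter being what you call ``symmetric exchange'', recorded in the paper just before Lemma~\ref{Lem:againP} with a citation to \cite{HH2003}---is correct. So for polymatroidal ideals your argument recovers precisely what the paper takes as known.

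In the componentwise case your treatment of (b) is sound: padding a low-degree $x_i$-divisible generator up to degree $d$, applying dual exchange inside the polymatroidal ideal $I_{\langle d\rangle}$, and then using minimality of $v$ in $\mathcal{G}(I)$ to force $x_i$ to divide the relevant $\mathcal{G}(I)$-witness all go through as you describe. The gap is in (a), which you wave through as ``the same scheme degree by degree''. For $I_1$ this can be rescued: once (b) gives $I_2\subset I_1$ one has $I_1=I:x_i$, and then the degree-$d$ generators of $I_1$ coincide with those of $I_{\langle d+1\rangle}:x_i$, which is polymatroidal by the equigenerated case. But for $I_2$ the claim is not automatic: $(I_2)_{\langle d\rangle}$ is generated by all degree-$d$ multiples of the $x_i$-free minimal generators of $I$, which is neither the $x_i$-free slice of $I_{\langle d\rangle}$ nor obtainable from it by a single colon, so the exchange property of $I_{\langle d\rangle}$ does not restrict to it in any evident way. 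This step needs a separate argument (or an appeal to the cited source) and is the one genuine hole in your outline.
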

	
	By \cite[Lemma 2.1]{HH2003} any polymatroidal ideal satisfies the so-called \textit{dual exchange property}, namely: for all $u,v\in\mathcal{G}(I)$ and all $i$ such that $\deg_{x_i}(u)<\deg_{x_i}(v)$ there exists $j$ with $\deg_{x_j}(u)>\deg_{x_j}(v)$ such that $x_i(u/x_j)\in\mathcal{G}(I)$.\smallskip
	
	For the proof of Theorem \ref{Thm:I(k)k-sq} we need also the following splitting decomposition which was proved in \cite[Lemma 5.6]{F2023}, (see also the proof of \cite[Theorem 1.1]{BH2013}). We regard the zero ideal $(0)$ as a polymatroidal ideal.
	
	\begin{Lemma}\label{Lem:againP}
		Let $I\subset S$ be a polymatroidal ideal generated in degree $\alpha(I)=d\ge2$. For any $i\in\supp(I)$ there exist polymatroidal ideals $I_1,I_2\subset S$ with $\alpha(I_1)=d-1$ such that $I=x_iI_1+I_2$, $i\notin\supp(I_2)$ and $I_2\subset I_1$.
	\end{Lemma}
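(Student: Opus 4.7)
The plan is to split $I$ according to divisibility by $x_i$: set $I_1=(u/x_i\,:\,u\in\mathcal{G}(I),\,x_i\mid u)$ and $I_2=(u\in\mathcal{G}(I)\,:\,x_i\nmid u)$. Then immediately $I=x_iI_1+I_2$, $i\notin\supp(I_2)$, the generating sets $\mathcal{G}(x_iI_1)$ and $\mathcal{G}(I_2)$ are disjoint by the divisibility distinction, and $I_1$ is equigenerated in degree $d-1=\alpha(I)-1$ (the ideal $I_2$ may be zero, which is allowed). What remains is to verify that $I_1$ and $I_2$ are polymatroidal and that $I_2\subseteq I_1$.

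Checking $I_2$ is polymatroidal is a short calculation: if $u,v\in\mathcal{G}(I_2)$ and $\deg_{x_j}(u)>\deg_{x_j}(v)$, the exchange property of $I$ yields some $k$ with $\deg_{x_k}(u)<\deg_{x_k}(v)$ and $x_k(u/x_j)\in\mathcal{G}(I)$; since $\deg_{x_i}(u)=\deg_{x_i}(v)=0$ one cannot have $k=i$, so $x_k(u/x_j)$ stays $x_i$-free and lies in $\mathcal{G}(I_2)$. The step I expect to require the most care is showing $I_1$ is polymatroidal. Given $u_1,v_1\in\mathcal{G}(I_1)$, I would lift to $u=x_iu_1,\,v=x_iv_1\in\mathcal{G}(I)$, apply exchange in $I$, and then divide by $x_i$. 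When the distinguishing index $j$ satisfies $j\neq i$, the exchanged generator $x_k(u/x_j)\in\mathcal{G}(I)$ is divisible by $x_i$ in both the subcases $k=i$ and $k\neq i$, so dividing by $x_i$ returns an element of $\mathcal{G}(I_1)$, and the required inequality at $k$ transfers from $u,v$ to $u_1,v_1$ (with a shift of $1$ if $k=i$). When $j=i$, the hypothesis $\deg_{x_i}(u_1)>\deg_{x_i}(v_1)\ge 0$ forces $x_i\mid u_1$, and exchange applied to $u,v$ at index $i$ delivers $k\neq i$ with $x_ku_1=x_k(u/x_i)\in\mathcal{G}(I)$; since $x_i\mid x_ku_1$ this descends to $x_k(u_1/x_i)\in\mathcal{G}(I_1)$, as needed.

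Finally, to establish $I_2\subseteq I_1$ I would invoke the dual exchange property of polymatroidal ideals quoted above (\cite[Lemma 2.1]{HH2003}). Fix $w\in\mathcal{G}(I_2)$, so $\deg_{x_i}(w)=0$. Since $i\in\supp(I)$, some $u\in\mathcal{G}(I)$ satisfies $x_i\mid u$, hence $\deg_{x_i}(w)<\deg_{x_i}(u)$. Applying dual exchange to the pair $(w,u)$ at index $i$ produces $j$ with $\deg_{x_j}(w)>\deg_{x_j}(u)$ and $x_i(w/x_j)\in\mathcal{G}(I)$; this generator is divisible by $x_i$, so $w/x_j\in\mathcal{G}(I_1)$, and therefore $w=x_j(w/x_j)\in I_1$. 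Combining the three verifications yields the lemma.
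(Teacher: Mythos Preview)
Your proof is correct. The paper does not actually give its own proof of this lemma; it simply records the statement and cites \cite[Lemma 5.6]{F2023} and the proof of \cite[Theorem 1.1]{BH2013}. Your argument follows precisely the natural decomposition one expects in those references: split $\mathcal{G}(I)$ by divisibility by $x_i$, verify the exchange property for $I_1$ and $I_2$ by lifting to $I$, and use the dual exchange property to obtain $I_2\subseteq I_1$. One remark that may streamline the verification that $I_1$ is polymatroidal: once $I_2\subseteq I_1$ is established, your $I_1$ coincides with the colon ideal $I:x_i$, and it is a standard fact (implicit in the vertex-splittable structure recalled in the paper) that colon ideals of polymatroidal ideals by variables remain polymatroidal; this avoids the case analysis on $j=i$ versus $j\neq i$, though your direct check is perfectly fine.
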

	
	For a graded ideal $I\subset S$, we denote by $\mu(I)=\dim_K(I/\m I)$ the minimal number of generators of $I$. Notice that if $I$ is a monomial ideal, then $\mu(I)=|\mathcal{G}(I)|$.
	
	Now, we establish Conjecture \ref{Conj:CC} for matroidal ideals.
	
	\begin{Theorem}\label{Thm:I(k)k-sq}
		For a matroidal ideal $I\subset S$ the following conditions are equivalent.
		\begin{enumerate}
			\item[(a)] $I=P_{F_1}\cdots P_{F_t}$ with $F_i\cap F_j=\emptyset$ for all $i\ne j$.
			\item[(b)] $I^{(k)}=I^k$ for all $k\ge1$.
			\item[(c)] $I$ is packed.
		\end{enumerate}
	\end{Theorem}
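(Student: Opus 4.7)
The plan is to prove the equivalences by going $(a)\Rightarrow(b)\Rightarrow(c)\Rightarrow(a)$, with the substance concentrated in the last implication.

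For $(a)\Rightarrow(b)$, if $I=P_{F_1}\cdots P_{F_t}$ with $F_i\cap F_j=\emptyset$, the factors $P_{F_1},\dots,P_{F_t}$ have pairwise disjoint supports, so Remark~\ref{Rem} gives $I^{(k)}=P_{F_1}^{(k)}\cdots P_{F_t}^{(k)}$. Each $P_{F_i}$ is a monomial prime, so $P_{F_i}^{(k)}=P_{F_i}^k$, and multiplying out yields $I^{(k)}=(P_{F_1}\cdots P_{F_t})^k=I^k$. The implication $(b)\Rightarrow(c)$ is exactly Proposition~\ref{Prop:Pack}.

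For $(c)\Rightarrow(a)$, let $M$ be the matroid on $\supp(I)$ whose bases are the supports of the minimal generators of $I$, so that $I=I_M$. First I would reduce to the connected case. If $M=M_1\oplus\cdots\oplus M_r$ decomposes into connected components with pairwise disjoint ground sets $E_1,\dots,E_r$, then $I=I_{M_1}\cdots I_{M_r}$ with pairwise disjoint supports, and each $I_{M_i}$ inherits packedness from $I$: a product of monomial ideals with pairwise disjoint supports is K\"onig if and only if each factor is (since monomial regular sequences in disjoint supports concatenate), and this observation is stable under substituting variables to $0$ or $1$. Thus it suffices to show that every connected matroid $M$ with $I_M$ packed has rank $1$, for then $I_M=P_{\supp(I_M)}$ and assembling the components produces the decomposition in $(a)$.

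Assume for contradiction that $M$ is connected of rank $d\ge 2$. My plan is to exhibit $U_{2,3}$ as a minor of $M$, since then setting the corresponding variables to $0$ or $1$ in $I_M$ produces the ideal $I_{3,2}=(x_ax_b,x_ax_c,x_bx_c)$, which has height $2$ but admits no two generators of disjoint supports, and therefore fails to be K\"onig, contradicting packedness. To produce the $U_{2,3}$-minor, I first show that $M$ has a circuit $C$ of size at least $3$. If every circuit of $M$ had size $2$, the parallel classes would partition $\supp(I)$ into $k$ classes; any basis is pairwise non-parallel of size $d$, giving $k\ge d$, while any $d+1$ elements must contain a parallel pair, giving $k\le d$. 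Hence $k=d$ and $M=U_{1,|C_1|}\oplus\cdots\oplus U_{1,|C_d|}$, which contradicts connectedness for $d\ge 2$. Once such a $C$ is found, pick $c_1,c_2,c_3\in C$: the restriction $M|_C$ equals $U_{|C|-1,|C|}$, and contracting $C\setminus\{c_1,c_2,c_3\}$ yields $U_{2,3}$ on $\{c_1,c_2,c_3\}$, which is the desired minor.

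The main obstacle is the careful translation between matroid minors and the ``minors'' of $I_M$ used in the definition of packedness: one must verify that setting $x_v=1$ in $I_M$ produces exactly $I_{M/v}$ and setting $x_v=0$ produces $I_{M\setminus v}$. For the former, generators $x^B$ with $v\in B$ map to $x^{B\setminus v}$, which generate $I_{M/v}$; generators $x^B$ with $v\notin B$ map to $x^B$ and are absorbed via basis exchange, since there exists $f\in B$ with $(B\setminus\{f\})\cup\{v\}$ a basis of $M$, so that $x^{B\setminus\{f\}}$ divides $x^B$. Once this dictionary between matroid minors and substitutions in $I_M$ is in place, the $U_{2,3}$-minor constructed above produces the contradicting non-K\"onig ideal $I_{3,2}$, completing the proof.
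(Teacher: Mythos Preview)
Your implications $(a)\Rightarrow(b)$ and $(b)\Rightarrow(c)$ match the paper's. For $(c)\Rightarrow(a)$ you take a genuinely different route: the paper argues by double induction on $\alpha(I)$ and $\mu(I)$ via the splitting $I=x_1I_1+I_2$ of Lemma~\ref{Lem:againP}, eventually reducing to the degree-two case and invoking the known Conforti--Cornu\'ejols equivalence for edge ideals together with the classification of matroidal edge ideals. Your matroid-minor approach via a $U_{2,3}$ obstruction is more conceptual and bypasses both the induction and the appeal to the edge-ideal case; however, as written it contains a gap.

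The problem is in your dictionary between matroid minors and variable substitutions. You correctly verify that setting $x_v=1$ yields $I_{M/v}$, but the companion claim that setting $x_v=0$ yields $I_{M\setminus v}$ is \emph{false} whenever $v$ is a coloop of the current matroid: then every basis contains $v$, so the substitution produces the zero ideal, while $M\setminus v$ has rank one less. This matters in your construction because you first restrict to the circuit $C$, i.e.\ delete all of $E\setminus C$; whenever $|C|-1<d$ the rank must drop along the way, so coloops necessarily appear and the correspondence breaks. Concretely, if $M$ has rank $3$ and $C=\{1,2,3\}$ is a $3$-circuit, setting the variables outside $C$ to $0$ in $I_M$ yields the zero ideal, not $I_{U_{2,3}}$.

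The fix is to reorder the minor operations so that contractions come first and bring the rank down to $2$ before any deletions. Extend the independent set $C\setminus\{c_3\}$ to a basis $B_0$ of $M$ and set $A=B_0\setminus\{c_1,c_2\}$; since $A$ is independent, your verified contraction dictionary applies step by step, and $I_M$ specializes (via $x_v\mapsto 1$ for $v\in A$) to $I_{M/A}$, a matroidal ideal of rank $2$. The unique circuit of $B_0\cup\{c_3\}$ is $C$, so $\{c_1,c_2\},\{c_1,c_3\},\{c_2,c_3\}$ are all bases of $M/A$; in particular $\{c_1,c_2,c_3\}$ spans $M/A$, and hence no element outside it is a coloop. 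Now setting the remaining variables to $0$ legitimately realizes the deletion and yields $I_{U_{2,3}}=(x_{c_1}x_{c_2},x_{c_1}x_{c_3},x_{c_2}x_{c_3})$, which is not K\"onig. With this adjustment your argument goes through and gives a clean, self-contained alternative to the paper's inductive proof.
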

	\begin{proof}
		The implication (a) $\Rightarrow$ (b) follows from Remark \ref{Rem} together with the fact that $P^{(k)}=P^k$ for all $k$, if $P$ is a monomial prime ideal. The implication (b) $\Rightarrow$ (c) follows from Proposition \ref{Prop:Pack}.
		
		Let us prove (c) $\Rightarrow$ (a). We proceed by induction on $\alpha(I)=d\ge1$. If $d=1$ there is nothing to prove. Now, let $d>1$. Next, we proceed by induction on $\mu(I)\ge1$. If $\mu(I)=1$, then $I$ is principal and (a) holds. Let $\mu(I)>1$. Without loss of generality we can assume that $I$ is fully-supported. Applying Lemma \ref{Lem:againP} we can write $I=x_1I_1+I_2$ with $I_1,I_2$ matroidal ideals such that $\alpha(I_1)=d-1$, $1\notin\supp(I_1)\cup\supp(I_2)$ and $I_2\subseteq I_1$. Since by assumption $I$ is packed, it follows that putting $x_1=0$ and $x_1=1$ in $I$, the resulting squarefree monomial ideals are also packed. Putting $x_1=0$ in $I$ we obtain that $I_2$ is packed. Whereas, putting $x_1=1$ we obtain that $I_1+I_2=I_1$ is packed. Here we used that $I_2\subseteq I_1$.
		
		If $I_2=(0)$, since $\alpha(I_1)<\alpha(I)$ and $I_1$ is packed, by induction we have that $I_1=P_{G_2}\cdots P_{G_d}$ and so $I=x_1I_1=P_{\{1\}}P_{G_2}\cdots P_{G_d}$ is of the required form.
		
		Now, suppose that $I_2\ne(0)$. Then $\alpha(I_2)=d$ and $\mu(I_2)<\mu(I)$. Recall that $\alpha(I_1)<\alpha(I)$. By induction on $\alpha(I)$ and $\mu(I)$ we have that $I_1=P_{G_1}\cdots P_{G_{d-1}}$ and $I_2=P_{F_1}\cdots P_{F_d}$ with $G_i\cap G_j=\emptyset$ and $F_i\cap F_j=\emptyset$ for all $i\ne j$.
		
		We claim that $\supp(I_2)=[n]\setminus\{1\}$. Otherwise assume that $i\notin\supp(I_2)$ for some $2\le i\le n$. Then, $i\in\supp(I_1)$ because $I$ is fully-supported. Hence, there is a monomial $v\in\mathcal{G}(x_1I_1)\subset\mathcal{G}(I)$ divided by $x_1x_i$. Let $u\in\mathcal{G}(I_2)\subset\mathcal{G}(I)$. We have $\deg_{x_i}(u)<\deg_{x_i}(v)$. By the dual exchange property we can find an integer $j$ with $\deg_{x_j}(u)>\deg_{x_j}(v)$ such that $w=x_i(u/x_j)\in\mathcal{G}(I)$. Since $i\in\supp(w)$ and $i\notin\supp(I_2)$ we have $w\notin I_2$. Hence $w\in x_1I_1$. However this is impossible because $x_1\nmid w$ as $u\in\mathcal{G}(I_2)$. This contradiction proves our claim.
		
		From $I_2\subset I_1$ we have $P_{F_1}\cdots P_{F_d}\subset P_{G_1}\cdots P_{G_{d-1}}\subset P_{G_i}$ for all $i$. Hence, $F_{\ell_i}\subseteq G_i$ for some $\ell_i$. If $i\neq j$, then $\ell_i\neq \ell_j$, otherwise, $F_{\ell_i}\subseteq G_i\cap G_j$, which contradicts to $G_i\cap G_j=\emptyset$. Thus, after a relabeling we have $F_1\subseteq G_1,\ldots,F_{d-1}\subseteq G_{d-1}$. Since $\supp(I_2)=[n]\setminus\{1\}$, we have $F_1\sqcup\dots\sqcup F_d=[n]\setminus\{1\}$. Hence, $G_i\subseteq\bigcup_{j=1}^d(G_i\cap F_j)$. Notice that for $1\leq j\leq d-1$, we have $G_i\cap F_j\subseteq G_i\cap G_j$. So $G_i\cap F_j=\emptyset$ for $j\ne i$ with $1\leq j\leq d-1$. Therefore,
		\begin{equation}\label{incGF}
			G_i\subseteq (G_i\cap F_i)\cup(G_i\cap F_d)=F_i\cup(G_i\cap F_d),
		\end{equation}
		for all $i$. We claim that $F_d$ intersects at most one of the sets $G_1,\ldots,G_{d-1}$. Suppose that this is not the case and let $r\in F_d\cap G_{i_1}$ and $s\in F_d\cap G_{i_2}$ with $i_1\ne i_2$. Then we can find $u\in \mathcal{G}(I_1)$ with $x_rx_s\mid u$. Hence $x_1u\in \mathcal{G}(I)$. Choose $v\in \mathcal{G}(I_2)$. Since $\deg_{x_1}(x_1u)>\deg_{x_1}(v)$, there exists an integer $t$ such that $\deg_{x_t}(x_1u)<\deg_{x_t}(v)$ and $x_t(x_1u)/x_1=x_tu\in \mathcal{G}(I)$. Thus $x_tu\in \mathcal{G}(I_2)$. Since $x_rx_s\mid u$, this means that at least one between $r$ and $s$ belongs to some $F_\ell$ with $\ell\neq d$. Then we get $F_\ell\cap F_d\neq\emptyset$, a contradiction. So the claim is proved and then we may assume that $G_i\cap F_d=\emptyset$ for $2\le i\le d-1$. From this and (\ref{incGF}) we obtain $G_i\subseteq F_i$ for $2\le i\le d-1$. Since we had the other inclusions as well, we get $G_i=F_i$ for $2\le i\le d-1$. Moreover, $F_1\subseteq G_1\subseteq F_1\cup F_d$. Hence
		$$
		I=x_1I_1+I_2=(x_1P_{G_1}+P_{F_1}P_{F_d})P_{F_2}\cdots P_{F_{d-1}}.
		$$
		Putting $x_i=1$ in $I$ for all $i\in F_2\sqcup\cdots\sqcup F_{d-1}$ we obtain the ideal $J=x_1P_{G_1}+P_{F_1}P_{F_d}$ which is packed. Note that $J=I(G)$ is an edge ideal. By \cite[Proposition 4.27]{GRV} it follows that $G$ is bipartite. Since $J=I(P)$, where $P=(x_i: i\in F_1\cup F_d\cup\{1\})$, by~\cite[Corollary 3.2]{HRV} the ideal $J$ is again matroidal. Now, using \cite[Theorem 2.3]{KNQ}  the only matroidal edge ideals are edge ideals of complete multipartite graphs. Since $G$ is bipartite, it follows that $G$ is a complete bipartite graph. Hence $J=P_{A}P_{B}$ with $A\cap B=\emptyset$. Finally $I=P_{A}P_{B}P_{F_2}\cdots P_{F_{d-1}}$ and the sets $A,B,F_2,\dots,F_{d-1}$ are pairwise disjoint because $I$ is squarefree.
	\end{proof}
	
	Now we turn to the non-squarefree case. Non-squarefree polymatroidal ideals $I$ satisfying $I^{(k)}=I^k$ for all $k\ge1$, abound, as we show next. We begin our discussion by stating an elementary lemma which gives a condition ensuring that $I^{(k)}=I^k$ for all $k\ge1$.
	\begin{Lemma}\label{Lem:el}
		Let $I\subset S$ be an ideal with $\m\in\Ass(I)$. Then $I^{(k)}=I^k$ for all $k\ge1$.
	\end{Lemma}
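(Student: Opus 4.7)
The plan is to apply the description of $I^{(k)}$ provided by equation~(\ref{eq:symbolicP'}), namely
$$I^{(k)} \;=\; \bigcap_{P \in \Ass(I)} (I(P))^k,$$
and then extract the contribution corresponding to the distinguished prime $P = \m$. First I would observe that $I(\m)$ coincides with $I$ itself: by definition, the monomial localization $I(\m)$ is obtained by performing the substitutions $x_i \mapsto 1$ for all variables $x_i \notin \m$, but since $\m = (x_1,\dots,x_n)$ contains \emph{every} variable of $S$, no such substitution is actually performed. Consequently $(I(\m))^k = I^k$.

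Since the hypothesis $\m \in \Ass(I)$ guarantees that $\m$ is among the primes indexing the intersection in (\ref{eq:symbolicP'}), the factor $(I(\m))^k = I^k$ appears in the intersection, yielding
$$I^{(k)} \;\subseteq\; (I(\m))^k \;=\; I^k.$$
Combined with the trivial containment $I^k \subseteq I^{(k)}$ (which is immediate from the defining intersection, as $I^k \subseteq I^k S_P \cap S$ for every associated prime $P$), this gives $I^{(k)} = I^k$ for all $k \ge 1$.

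There is essentially no obstacle to overcome here: the argument is a one-line consequence of the monomial localization formula (\ref{eq:symbolicP'}), in keeping with the label \emph{elementary} attached to the lemma. The only conceptual point worth flagging is the observation that, at the unique maximal graded ideal $\m$, the monomial localization acts as the identity on $I$, which is precisely what singles out $\m$ among all associated primes in this context.
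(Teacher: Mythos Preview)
Your proof is correct and follows essentially the same approach as the paper: both isolate the $\m$-indexed term in the intersection defining $I^{(k)}$ and observe that it equals $I^k$, then invoke the trivial containment $I^k\subseteq I^{(k)}$. The only cosmetic difference is that you phrase it via the monomial localization identity $(I(\m))^k=I^k$ from~(\ref{eq:symbolicP'}), whereas the paper writes the equivalent statement $I^kS_\m\cap S=I^k$ directly from the general definition of symbolic powers.
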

	\begin{proof}
		Since $\m\in\Ass(I)$ and $I^kS_\m\cap S=I^k$ we have $I^{(k)}\subseteq I^k$ for all $k\ge1$. The assertion follows because the opposite inclusion always holds.
	\end{proof}
	
	An immediate consequence of this result is given in the following corollary which allows to construct, starting with any polymatroidal ideal, several polymatroidal ideals whose ordinary and symbolic powers coincide.
	\begin{Proposition}\label{Prop:new-k-k}
		Let $I,J\subset S$ be polymatroidal ideals such that $\m\in\Ass(I)$. Then $(IJ)^{(k)}=(IJ)^k$ for all $k\ge1$.
	\end{Proposition}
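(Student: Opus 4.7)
The plan is to apply Lemma~\ref{Lem:el} to the product $IJ$. Since that lemma immediately gives $(IJ)^{(k)}=(IJ)^k$ for every $k\ge 1$ as soon as $\m\in\Ass(IJ)$ is known, the task reduces to proving $\m\in\Ass(IJ)$.

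First, note that $IJ$ is polymatroidal (products of polymatroidal ideals are polymatroidal), and hence equigenerated in a single degree $d:=\alpha(I)+\alpha(J)=\alpha(IJ)$. I would then produce a monomial $u\in S\setminus I$ satisfying $\m u\subseteq I$ and $\deg(u)=\alpha(I)-1$. A monomial $u\notin I$ with $\m u\subseteq I$ exists by the hypothesis $\m\in\Ass(I)$; the sharp degree $\alpha(I)-1$ comes from the fact that $I$ has a linear resolution, so the Betti table of $S/I$ is supported at the positions $(i,i+\alpha(I)-1)$ for $i\ge 1$. The assumption $\m\in\Ass(I)$ is equivalent to $\depth(S/I)=0$, which by Auslander--Buchsbaum forces $\pd(S/I)=n$ and hence $\beta_{n,n+\alpha(I)-1}(S/I)\ne 0$. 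Graded Koszul duality then places the entire socle of $S/I$ in the single degree $\alpha(I)-1$, and a monomial representative of a socle element provides the desired $u$.

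With such a $u$ in hand, fix any $v\in\mathcal{G}(J)$ and set $w=uv$. For every variable $x_i$ we have $x_iw=(x_iu)v\in I\cdot J=IJ$, so $\m\subseteq(IJ:w)$. On the other hand, $\deg(w)=\alpha(I)+\alpha(J)-1=d-1<d=\alpha(IJ)$, and since $IJ$ is equigenerated in degree $d$, no monomial of strictly smaller degree can belong to $IJ$. Therefore $w\notin IJ$, yielding $(IJ:w)=\m$ and so $\m\in\Ass(IJ)$. Lemma~\ref{Lem:el} applied to $IJ$ then finishes the proof.

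The principal obstacle is the precise degree equality $\deg(u)=\alpha(I)-1$: the lower bound $\deg(u)\ge \alpha(I)-1$ is elementary (any $f_i\in\mathcal{G}(I)$ dividing $x_iu$ must satisfy $\deg_{x_i}(f_i)=\deg_{x_i}(u)+1$ and $\deg_{x_j}(f_i)\le\deg_{x_j}(u)$ for $j\ne i$, which forces $\alpha(I)\le\deg(u)+1$), but the matching upper bound genuinely requires the linear-resolution structure of polymatroidal ideals. Once this socle degree is pinned down, the remainder of the argument collapses to a one-line degree comparison.
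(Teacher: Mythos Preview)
Your proof is correct and follows essentially the same route as the paper: both reduce to showing $\m\in\Ass(IJ)$ via Lemma~\ref{Lem:el}, use $\m\in\Ass(I)$ together with Auslander--Buchsbaum and the linear resolution of $I$ to locate a monomial socle element $u$ of degree $\alpha(I)-1$, and then check that $w=uv$ with $v\in\mathcal{G}(J)$ witnesses $\m=(IJ:w)$ by a degree comparison. The paper makes the socle-degree step explicit via the Koszul-homology isomorphism $\Tor_n^S(K,S/I)\cong((I:\m)/I)\,e_1\wedge\cdots\wedge e_n$, which is exactly the ``graded Koszul duality'' you invoke.
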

	\begin{proof}
		In view of Lemma \ref{Lem:el} it is enough to show that $\m\in\Ass(IJ)$. Since $\m\in\Ass(I)$ we have $\depth(S/I)=0$. By the Auslander-Buchsbaum formula $\pd(S/I)=n$. We have the following isomorphisms
		$$
		\Tor^S_{n-1}(K,I)\cong\Tor^S_n(K,S/I)\cong H_n(x_1,\dots,x_n;S/I)\cong ((I:\m)/I) \,e_1\wedge\cdots\wedge e_n,
		$$
		where $H_n(x_1,\dots,x_n;S/I)$ denotes the $n$th Koszul homology of $S/I$ with respect to the sequence $x_1,\dots,x_n$ and each $e_i$ has degree one. Let $\alpha(I)=d$, $\alpha(J)=d'$. Since $I$ has a $d$-linear resolution, it follows that $\Tor^S_{n-1}(K,I)_j=0$ if and only if $j\ne n+d-1$. Since $\pd(S/I)=n$ it follows that $\Tor^S_{n-1}(K,I)_{n+d-1}\ne0$. Due to the above isomorphisms and since $e_1\wedge\cdots\wedge e_n$ has degree $n$, we deduce that $((I:\m)/I)_{d-1}\ne0$. Hence, we can find a monomial $u\in(I:\m)\setminus I$ of degree $d-1$. Clearly $I:(u)=\m$. Now let $v\in\mathcal{G}(J)$. Then $vu\notin IJ$ because $\deg(uv)=d+d'-1$ but $\alpha(IJ)=d+d'$. Hence the ideal $IJ:(uv)$ is proper and contains $\m$. This shows that $IJ:(uv)=\m$ and so $\m\in\Ass(IJ)$, as desired.
	\end{proof}
	
	In view of Proposition \ref{Prop:new-k-k}, it is hopeless to give a full classification of non-squarefree polymatroidal ideals whose ordinary and symbolic powers coincide.\smallskip
	
	Next, we provide more nice classes of non-squarefree polymatroidal ideals whose ordinary and symbolic powers coincide.\smallskip
	
	Recall that a \textit{transversal} polymatroidal ideal is defined as the product of any arbitrary number of monomial prime ideals.\smallskip
	
	Let $u=x_{i_1}\cdots x_{i_d}\in S$ be a monomial of degree $d$, with $1\le i_1\le\cdots\le i_d\le n$. The \textit{principal Borel ideal} generated by $u$, is defined as the monomial ideal $B(u)\subset S$ such that
	$$
	\mathcal{G}(B(u))=\{x_{j_1}\cdots x_{j_d}:\ j_1\le i_1,\,\dots,\,j_d\le i_d\}.
	$$
	
	It is well-known that $B(u)$ is a polymatroidal ideal. Such an ideal is \textit{strongly stable}. That is, it has the following property: for all monomials $w\in B(u)$ and all $i<j\le n$ with $x_j\mid w$ we have $x_i(w/x_j)\in B(u)$.
	
	\begin{Theorem}\label{Thm:k(k)}
		Consider the following families of polymatroidal ideals.
		\begin{enumerate}
			\item[(a)] Polymatroidal ideals generated in degree $2$ which are not squarefree.
			\item[(b)] Transversal polymatroidal ideals.
			\item[(c)] Principal Borel ideals.
		\end{enumerate}
		Then $I^{(k)}=I^k$ for all $k\geq 1$ for any ideal $I$ belonging to one of the families \textup{(a)-(c)}. In particular, $\reg\,I^{(k)}=\reg\,I^k$ and $I^{(k)}$ has linear quotients, and so it is componentwise linear, for all $k\ge1$.
	\end{Theorem}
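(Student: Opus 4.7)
The unifying strategy for all three cases is to show that the ideal $(x_j : j \in \supp(I))$ is an associated prime of $I$. Once this is established, applying Lemma \ref{Lem:el} inside the polynomial subring $K[x_j : j \in \supp(I)]$ (where variables outside $\supp(I)$ do not affect either $I^{(k)}$ or $I^k$) yields $I^{(k)} = I^k$ for every $k \ge 1$. The final sentence of the statement then follows automatically: products of polymatroidal ideals are polymatroidal, so $I^k = I^{(k)}$ is polymatroidal, hence has linear quotients and a linear resolution, and $\reg I^{(k)} = \reg I^k = \alpha(I)k$.

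For (a), assume $I$ is fully supported. Since $I$ is non-squarefree in degree $2$, some $x_i^2 \in \mathcal{G}(I)$. For each $j \in [n] \setminus \{i\}$, pick $v \in \mathcal{G}(I)$ with $x_j \mid v$. Applying the dual exchange property to $x_i^2$ and $v$ at index $j$, the output index is forced to be $i$ (the only index where $x_i^2$ has positive exponent), giving $x_i x_j \in \mathcal{G}(I)$. Hence $\m \subseteq I : (x_i)$, with equality since $x_i \notin I$.

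For (b), write $I = P_{F_1} \cdots P_{F_t}$ and let $G$ be the intersection graph on $\{1, \dots, t\}$ with an edge $\{i, j\}$ whenever $F_i \cap F_j \ne \emptyset$. The connected components of $G$ produce a factorization $I = I_1 \cdots I_r$ where the supports $\supp(I_\ell)$ are pairwise disjoint; by Remark \ref{Rem}, both $I^{(k)}$ and $I^k$ factor along this decomposition, so we may assume $G$ is connected. Fix a spanning tree $T$ of $G$; for each edge $e = \{i, j\}$ of $T$ pick $v_e \in F_i \cap F_j$, and set $u = \prod_{e \in E(T)} x_{v_e}$, a monomial of degree $t - 1$ and so not in $I$. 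For any $a \in \supp(I)$, choose $s$ with $a \in F_s$, root $T$ at $s$, and assign to each non-root vertex $i$ the variable $x_{v_e}$ of its parent edge $e$ (which lies in $F_i$ by construction), together with $x_a$ to $F_s$. This matches the $t$ factors of $x_a u$ with $P_{F_1}, \dots, P_{F_t}$, showing $x_a u \in I$. Hence $I : (u) = (x_j : j \in \supp(I))$.

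For (c), for $B(u)$ with $u = x_{i_1} \cdots x_{i_d}$ and $i_1 \le \cdots \le i_d$, consider $x_1^{d-1}$: for each $a \le i_d$, the exponent vector $(j_1, \dots, j_d) = (1, \dots, 1, a)$ satisfies $j_\ell \le i_\ell$ for every $\ell$, so $x_1^{d-1} x_a \in B(u)$. Therefore $B(u) : (x_1^{d-1}) = (x_1, \dots, x_{i_d}) = (x_j : j \in \supp(B(u)))$, and $x_1^{d-1} \notin B(u)$ forces equality. The principal obstacle is the connected case of (b): one must verify that the spanning-tree construction of $u$ produces, for every $a \in \supp(I)$, a valid factorization of $x_a u$ through $P_{F_1}, \dots, P_{F_t}$. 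The rooting-at-$s$ device handles this uniformly because in a rooted tree every non-root vertex has a unique parent edge, whose chosen variable lies in the required $F_i$.
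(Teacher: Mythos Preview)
Your proof is correct and follows essentially the same strategy as the paper: in each case one reduces to a fully-supported ideal and exhibits an explicit witness that the maximal ideal of the ambient polynomial ring is associated, then invokes Lemma~\ref{Lem:el}. The differences are minor. In (a) you use the dual exchange property where the paper uses the ordinary exchange property, with the same outcome $x_ix_j\in I$. In (c) you take $x_1^{d-1}$ as colon element while the paper takes $u/x_n$; both work, yours via the explicit description of $\mathcal{G}(B(u))$, the paper's via strong stability.

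The one genuine difference is in (b). The paper performs the same intersection-graph decomposition into connected components, but then simply cites \cite[Theorem~3.3]{HRV} for the fact that $\m_i\in\Ass_{S_i}(I_i)$ when $G_{I_i}$ is connected. You instead prove this directly with the spanning-tree construction: pick $v_e\in F_i\cap F_j$ for each tree edge $e$, form $u=\prod_e x_{v_e}$ of degree $t-1<\alpha(I)$, and for any $a\in\supp(I)$ root the tree at a vertex $s$ with $a\in F_s$ to obtain a factorization $x_au\in P_{F_1}\cdots P_{F_t}$. This is precisely the argument underlying the cited result in \cite{HRV}, so your proof is self-contained where the paper relies on an external reference. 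Both yield the same conclusion; your version is more elementary but longer, the paper's is shorter but requires the reader to consult \cite{HRV}.
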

	\begin{proof}
		Let $I$ be an ideal belonging to one of the families (a)-(c). Without loss of generality, we may assume that $I$ is fully-supported.
		
		(a) Since $I$ is not squarefree, up to a relabeling we can assume that $x_1^2\in\mathcal{G}(I)$. We claim that $I:(x_1)=\m$. Then the assertion follows Lemma \ref{Lem:el}. Clearly $x_1\in I:(x_1)$. Let $p\in\{2,\dots,n\}$. Since $I$ is fully-supported, we have $x_px_q\in I$ for some $q$. If $q=1$, then $x_p\in I:(x_1)$. Suppose $q\ne 1$. Then $\deg_{x_q}(x_px_q)>\deg_{x_q}(x_1^2)$. Thus, by the exchange property $x_1x_p\in I$ and so $x_p\in I:(x_1)$, as desired.
		
		(b) Let $I=P_1P_2\cdots P_t$ be a transversal polymatroidal ideal, with each $P_i$ a monomial prime ideal. Following \cite[Section 3]{HRV}, we define the \textit{intersection graph} of $I$ as the graph $G_I$ with the vertex set $\{1,\dots,t\}$ such that $\{i,j\}\in E(G_I)$ if and only if $i\ne j$ and $\mathcal{G}(P_i)\cap\mathcal{G}(P_j)\ne\emptyset$. Let $G_1,\dots,G_c$ be the connected components of $G_I$. Then $V(G_i)\cap V(G_j)=\emptyset$ for all $1\le i<j\le c$. It follows that we can write $I=I_1\cdots I_c$, where each $I_i$ is a transversal polymatroidal ideal whose intersection graph $G_{I_i}$ is the graph $G_i$ and such that $\supp(I_i)\cap\supp(I_j)=\emptyset$ for all $1\le i<j\le c$. Regarding each ideal $I_i$ as a monomial ideal in the polynomial ring $S_i=K[x_j:j\in\supp(I_i)]$ with the graded maximal ideal $\m_i$, and since $G_{I_i}=G_i$ is a connected graph, it follows from \cite[Theorem 3.3]{HRV} that $\m_i\in\Ass_{S_i}(I_i)$. Hence, regarding each $I_i$ as an ideal of $S_i$, Lemma \ref{Lem:el} implies that $I_i^{(k)}=I_i^k$ for all $k\ge1$. Since symbolic powers remain unchanged after a polynomial extension of $S_i$ by new variables, it follows that $I_i^{(k)}=I_i^k$ for all $k\ge1$ when regarding the ideals $I_i$ as ideals of $S$. Since the ideals $I_1,\dots,I_c$ have pairwise disjoint supports, using Remark \ref{Rem} it follows that $$I^{(k)}=(I_1\cdots I_c)^{(k)}=I_1^{(k)}\cdots I_c^{(k)}=I_1^{k}\cdots I_c^{k}=I^k$$ for all $k\ge1$, as desired.
		
		(c) Let $I=B(u)$ with $u=x_1^{a_1}\cdots x_n^{a_n}$. Since $I$ is fully-supported we have $a_n>0$. Hence $I:(u/x_n)=\m\in\Ass(I)$ because $I$ is strongly stable. The conclusion follows from Lemma \ref{Lem:el}.
	\end{proof}

	\section{Componentwise linearity of symbolic powers}\label{sec6}
	
	In this final section, we determine several families of polymatroidal ideals which satisfy Conjectures \ref{ConjA} and \ref{ConjB}.
	
	A graph $G$ is called a \textit{complete multipartite graph} if there exists a partition of the vertex set $V(G)=A_1\sqcup\cdots\sqcup A_m$ with $m\ge2$ and $A_i\ne\emptyset$ for $i=1,\dots,m$ such that $E(G)=\{\{x_i,x_j\}:\ x_i\in A_i,\,x_j\in A_j,\,i\ne j\}$. If $m=2$, then $G$ is called a \textit{complete bipartite graph}.
	
	A matroidal ideal generated in degree two is the edge ideal $I=I(G)$ of some graph $G$. It follows from \cite[Theorem 2.3]{KNQ} that $G$ is a complete multipartite graph. Using this fact, in the following proposition we show that Conjectures \ref{ConjA} and \ref{ConjB} hold for such polymatroidal ideals.
	\begin{Proposition}\label{Prop:I(G)-reg}
		Let $I\subset S$ be a matroidal ideal generated in degree two. Then $\reg\,I^{(k)}=\reg\,I^k=2k$ and $I^{(k)}$ is componentwise polymatroidal for all $k\ge1$.
	\end{Proposition}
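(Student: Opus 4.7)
I would proceed in three steps: first classify $I$ via \cite[Theorem~2.3]{KNQ}; then apply the Francisco--Van Tuyl criterion \cite[Theorem~3.1]{FVT} (already used for $I_{n,2}$ in Proposition~\ref{Prop:ndcp}) to establish componentwise polymatroidality of the symbolic powers; and finally pin down $\omega(I^{(k)})$ by a combinatorial analysis of the intersection $\bigcap P_i^k$.

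Since $I$ is matroidal and generated in degree $2$, \cite[Theorem~2.3]{KNQ} identifies $I=I(G)$ for some complete multipartite graph $G$ with vertex partition $V(G)=A_1\sqcup\cdots\sqcup A_m$, $m\ge 2$. A direct vertex-cover inspection shows that the minimal vertex covers of $G$ are exactly the complements $[n]\setminus A_i$, so $\Ass(I)=\{P_1,\dots,P_m\}$ with $P_i=P_{[n]\setminus A_i}$, and $I^{(k)}=\bigcap_{i=1}^{m}P_i^k$ for all $k\ge 1$. Since the parts of $G$ are pairwise disjoint, $P_i+P_j=\m$ whenever $i\ne j$, so \cite[Theorem~3.1]{FVT} yields that $I^{(k)}$ is componentwise polymatroidal, and in particular componentwise linear. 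Consequently $\reg I^{(k)}=\omega(I^{(k)})$, and it remains to show $\omega(I^{(k)})=2k$.

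The lower bound $\omega(I^{(k)})\ge 2k$ is Theorem~\ref{Thm:reg-kk}(b), since $I$ is matroidal and hence of minimal intersection type. For the upper bound, let $u\in\mathcal{G}(I^{(k)})$. Membership $u\in P_i^k$ reads $\deg(u_{A_i})\le\deg(u)-k$, and the minimality of $u$ forces that for each $\ell\in\supp(u)$, lying in the unique part $A_{i_\ell}$, there is an index $i\ne i_\ell$ with $\deg(u_{A_i})=\deg(u)-k$. Setting $T=\{i:\deg(u_{A_i})=\deg(u)-k\}$, this minimality condition reads $T\not\subseteq\{i_\ell\}$ for every $\ell\in\supp(u)$. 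If $|T|\ge 2$, picking distinct $i,i'\in T$ and using the disjointness $A_i\cap A_{i'}=\emptyset$ yields
$$2\deg(u)-2k=\deg(u_{A_i})+\deg(u_{A_{i'}})\le\deg(u),$$
so $\deg(u)\le 2k$. If instead $|T|\le 1$, the constraint forces $T=\{i^*\}$ with $i^*\ne i_\ell$ for every $\ell\in\supp(u)$; hence $\supp(u)\cap A_{i^*}=\emptyset$, $\deg(u_{A_{i^*}})=0$, and the tightness $\deg(u_{A_{i^*}})=\deg(u)-k$ forces $\deg(u)=k\le 2k$.

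Combining the two inequalities gives $\omega(I^{(k)})=2k$, whence $\reg I^{(k)}=2k=\reg I^k$. The main technical step is the last one: componentwise polymatroidality alone delivers $\reg I^{(k)}=\omega(I^{(k)})$, but a priori this invariant could exceed $2k$. Identifying the tightness set $T$ and exploiting the disjointness of the partition is what pins the top degree down to the expected value.
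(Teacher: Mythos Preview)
Your proof is correct, but it differs from the paper's in both main steps. For componentwise polymatroidality, the paper observes that $G^c$ is a disjoint union of complete graphs, hence a block graph, and then invokes \cite[Theorem~2.3(a)]{FMR} together with \cite[Theorem~2.4]{BR} (linear quotients with respect to the lex order for every variable ordering forces each graded component to be polymatroidal). You instead use the Francisco--Van~Tuyl criterion \cite[Theorem~3.1]{FVT} via $P_i+P_j=\m$; the paper itself mentions this alternative route in the proof of Proposition~\ref{Prop:ndcp}, so this is perfectly legitimate. For the regularity, the paper simply cites \cite[Corollary~2.4]{FMR}, whereas you give a direct combinatorial bound on $\omega(I^{(k)})$: the tightness-set argument is elementary and self-contained, and together with Theorem~\ref{Thm:reg-kk}(b) it cleanly yields $\omega(I^{(k)})=2k$. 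Your approach trades one external citation (FMR) for another (FVT) on the structural side, but gains a genuinely independent proof of the regularity statement that does not rely on the machinery of \cite{FMR}.

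One minor remark: in your case analysis, the case $|T|=1$ is in fact vacuous. If $T=\{i^*\}$ then, as you show, $\deg(u)=k$; but for any $i\ne i^*$ one has $\deg(u_{A_i})<\deg(u)-k=0$, which is impossible. So every minimal generator actually falls into the case $|T|\ge 2$. This does not affect the validity of your upper bound, but you could streamline the argument by noting it.
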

	\begin{proof}
		We have $I=I(G)$, where $G$ is a complete multipartite graph. Now, let $V(G)=A_1\sqcup\cdots\sqcup A_m$ be the vertex partition of $G$ as described before. It follows that $G^c$ is the disjoint union of the complete graphs on the vertex sets $A_1,\dots,A_m$. Thus $G^c$ is a block graph.  It is shown in the proof of \cite[Theorem 2.3(a)]{FMR} that $(I^{(k)})_{\langle d\rangle}$ has linear quotients with respect to the lex order induced by any perfect elimination order of $G^c$ in the sense of Dirac \cite{Dirac}. Notice furthermore that any order of the variables $x_1,\dots,x_n$ is a perfect elimination order of $G^c$. Hence, combining this with \cite[Theorem 2.4]{BR} we obtain that $I^{(k)}$ is componentwise polymatroidal for all $k$. Finally, by \cite[Corollary 2.4]{FMR} we have $\reg\,I^{(k)}=\reg\,I^k=2k$ for all $k$.
	\end{proof}
	
	As a consequence of this result and Theorem \ref{Thm:k(k)}(a) we have
	\begin{Corollary}
		Let $I\subset S$ be a polymatroidal ideal generated in degree two. Then $\reg\,I^{(k)}=\reg\,I^k$ and $I^k$ has linear quotients for all $k\ge1$.
	\end{Corollary}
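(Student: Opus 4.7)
The plan is to split into two disjoint cases according to whether $I$ is squarefree, and invoke the two results immediately preceding the corollary. The statement compounds two assertions ($\reg\,I^{(k)}=\reg\,I^k$ and $I^k$ has linear quotients), both of which reduce, in each of the two cases, to something already established.

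First, suppose $I$ is squarefree. Then $I$ is a matroidal ideal generated in degree two, so Proposition \ref{Prop:I(G)-reg} applies directly and gives $\reg\,I^{(k)}=\reg\,I^k=2k$. For the linear quotients statement on $I^k$: since products of polymatroidal ideals are polymatroidal, $I^k$ is polymatroidal, and by \cite[Theorem 2.4]{BR} every polymatroidal ideal has linear quotients with respect to any lex order. (In fact Proposition \ref{Prop:I(G)-reg} goes further and shows that $I^{(k)}$ itself is componentwise polymatroidal, hence componentwise linear, which already encodes the linear quotient property; this would be relevant if one reads the final clause as referring to the symbolic power.)

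Second, suppose $I$ is not squarefree. Then $I$ falls within family (a) of Theorem \ref{Thm:k(k)}, so $I^{(k)}=I^k$ for every $k\ge 1$. The regularity equality becomes tautological, and the linear quotients of $I^k$ follow again from the fact that $I^k$ is polymatroidal together with \cite[Theorem 2.4]{BR}.

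There is no real obstacle: the corollary is a clean bookkeeping consequence of Proposition \ref{Prop:I(G)-reg} in the squarefree case and of Theorem \ref{Thm:k(k)}(a) in the non-squarefree case. The only point worth being careful about is that these two cases exhaust all possibilities for a polymatroidal ideal generated in degree two, so the argument is complete without further casework.
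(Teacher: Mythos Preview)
Your proposal is correct and matches the paper's approach exactly: the paper states the corollary with the preamble ``As a consequence of this result and Theorem~\ref{Thm:k(k)}(a)'', which is precisely your two-case split into squarefree (handled by Proposition~\ref{Prop:I(G)-reg}) and non-squarefree (handled by Theorem~\ref{Thm:k(k)}(a)). Your added remark that $I^k$ is polymatroidal and hence has linear quotients via \cite[Theorem 2.4]{BR} makes explicit what the paper leaves implicit.
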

	
	We define the \textit{greatest common divisor} of a monomial ideal $I\subset S$ as the monomial $\gcd(I)=\gcd(u:\ u\in\mathcal{G}(I))$. It is clear that any monomial ideal $I\subset S$ can be written as $\gcd(I)J$ for a unique monomial ideal $J\subset S$ with $\gcd(J)=1$. Notice furthermore that $\gcd(I)=1$ if and only if $\height(I)>1$.
	
	\begin{Proposition}\label{Prop:uI}
		Let $I\subset S$ be a monomial ideal and let $u\in S$ be a monomial. Then, $(uI)^{(k)}=(u)^kI^{(k)}$ for all $k\ge1$. In particular, $\reg(uI)^{(k)}=\reg\,I^{(k)}+\deg(u)k$.
	\end{Proposition}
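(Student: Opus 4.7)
The plan is to establish the equality $(uI)^{(k)} = (u)^k I^{(k)}$ via the monomial-localization formula $J^{(k)} = \bigcap_{P \in \Ass(J)} J(P)^k$ of (\ref{eq:symbolicP'}), together with the immediate identity $(uI)(P) = u(P) \cdot I(P)$ valid for every monomial prime $P$. I would begin by extracting two facts about associated primes. From the short exact sequence $0 \to S/I \xrightarrow{\cdot u} S/uI \to S/(u) \to 0$ and the decomposition $\Ass(S/(u)) = \{(x_i) : i \in \supp(u)\}$ one obtains the sandwich $\Ass(I) \subseteq \Ass(uI) \subseteq \Ass(I) \cup \{(x_i) : i \in \supp(u)\}$. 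A direct check (using the witness $(u/x_i) \cdot v$ for any $v \in \mathcal{G}(I)$ with $x_i \nmid v$, or otherwise $(x_i) \in \Ass(I)$ when every generator is divisible by $x_i$) shows in addition that $(x_i) \in \Ass(uI)$ for every $i \in \supp(u)$.

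For the inclusion $(u)^k I^{(k)} \subseteq (uI)^{(k)}$, I would take a monomial $w = u^k v$ with $v \in I^{(k)}$ and verify $w \in u(P)^k I(P)^k$ for each $P \in \Ass(uI)$. If $P \in \Ass(I)$, the membership $v \in I(P)^k$ is immediate, and writing $u^k = u(P)^k \cdot (u/u(P))^k$ gives the containment at once. If $P = (x_i) \in \Ass(uI) \setminus \Ass(I)$ with $i \in \supp(u)$, I would argue $I(P) = S(P)$: otherwise every generator of $I$ is divisible by $x_i$, forcing $(x_i)$ to be a minimal (hence associated) prime of $I$, contrary to assumption. In this reduced case $I(P)^k = S(P)$ and the containment is automatic.

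For the reverse inclusion, given a monomial $w \in (uI)^{(k)}$ I would separately prove (i)~$u^k \mid w$ and (ii)~$w/u^k \in I^{(k)}$. For (i), for each $i \in \supp(u)$ the membership $w \in (uI)((x_i))^k$ forces $\deg_{x_i}(w) \geq k a_i$ via the factor $u((x_i))^k = x_i^{ka_i}$. For (ii), set $v = w/u^k$; for any $Q \in \Ass(I) \subseteq \Ass(uI)$ the membership $w \in u(Q)^k I(Q)^k$ combined with $u(Q)^k \mid u^k \mid w$ yields $w/u(Q)^k = (u/u(Q))^k v \in I(Q)^k$. Since $u/u(Q)$ has support disjoint from $\supp(Q)$ and membership in $I(Q)^k$ is determined by the $\supp(Q)$-part of a monomial, this is equivalent to $v \in I(Q)^k$; as this holds for every $Q \in \Ass(I)$, we conclude $v \in I^{(k)}$.

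The regularity statement is immediate from the standard fact that multiplying a graded ideal by a monomial of degree $d$ shifts the Castelnuovo–Mumford regularity by exactly $d$, whence $\reg (uI)^{(k)} = \reg((u)^k I^{(k)}) = k\deg(u) + \reg I^{(k)}$. The principal delicate step is disposing of the primes in $\Ass(uI) \setminus \Ass(I)$ in the first inclusion; the key observation is that such primes are necessarily of the form $(x_i)$ with $x_i$ not dividing every generator of $I$, which collapses $I(P)$ to the unit ideal and trivialises the requirement.
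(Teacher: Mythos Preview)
Your proof is correct and follows essentially the same approach as the paper: both use the monomial-localization formula~(\ref{eq:symbolicP'}) together with the identification of $\Ass(uI)$ via the short exact sequence $0\to S/I\to S/uI\to S/(u)\to 0$, and both hinge on the fact that the height-one primes $(x_i)$ with $i\in\supp(u)$ contribute only the condition $x_i^{ka_i}\mid w$.

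The organizational difference is that the paper first reduces to the case $\gcd(I)=1$ (so that $I((x_i))=S((x_i))$ for all $i$ automatically), computes the intersection equationally, and then bootstraps to the general case by writing $I=vJ$ with $\gcd(J)=1$. You instead prove both inclusions directly in full generality, handling the height-one primes by the dichotomy ``either $(x_i)\in\Ass(I)$, in which case it is already covered, or $(x_i)\notin\Ass(I)$, in which case $I((x_i))$ is the unit ideal''. Your route avoids the bootstrap step entirely and is slightly more streamlined; the paper's route makes the intersection computation a single chain of equalities rather than two containments.
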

	\begin{proof}
		Firstly, we claim that $\Ass(uI)=\Ass(I)\cup\{(x_i):\ x_i\mid u\}$. To this end, consider the short exact sequence
		\begin{equation}\label{eq:esu}
			0\rightarrow S/((uI):(u))\rightarrow S/(uI)\rightarrow S/(uI,u)\rightarrow0.
		\end{equation}
		Notice that $(uI):(u)=I$ using \cite[Proposition 1.2.2]{HHBook}, and $(uI,u)=(u)$. Then, the exact sequence (\ref{eq:esu}) implies that $\Ass(I)\subseteq\Ass(uI)\subseteq\Ass(I)\cup\Ass((u))$. We have $\Ass((u))=\{(x_i):\ x_i\mid u\}$. Since $uI\subseteq P_{\{i\}}$ for each $i$ with $x_i\mid u$, and $\height(P_{\{i\}})=1$, it follows that $\Ass((u))\subseteq\Ass(uI)$. The claim is proved. 
		
		Now, assume for the moment that $\gcd(I)=1$. Later we discuss the general case. Given a monomial prime ideal $P$ we denote by $u_P$ the monomial of $S$ obtained from $u$ by applying the substitutions $x_i\mapsto 1$ for $x_i\notin P$. Let $u=x_1^{a_1}\cdots x_n^{a_n}$. Using formula (\ref{eq:symbolicP'}) and our claim we have
		\begin{align*}
			(uI)^{(k)}\ &=\ \bigcap_{P\in\Ass(uI)}((uI)(P))^k\\
			&=\ (\bigcap_{P\in\Ass((u))}((uI)(P))^k)\cap(\bigcap_{P\in\Ass(I)}((uI)(P))^k).
		\end{align*}
		
		Let $P\in\Ass((u))$. Then $P=(x_i)$ for some $x_i\mid u$. Since $\gcd(I)=1$, we get that $((uI)(P))^k=(x_i^{ka_i})$. Whereas, if $P\in\Ass(I)$, then $((uI)(P))^k=(u_P^k)(I(P))^k$. Hence,
		\begin{align}
			\nonumber(uI)^{(k)}\ &=\ (\bigcap_{x_i\mid u}(x_i^{ka_i}))\cap(\bigcap_{P\in\Ass(I)}(u_P^k)(I(P))^k)\ =\ (u^k)\cap(\bigcap_{P\in\Ass(I)}(u_P^k)(I(P))^k)\\
			\label{eq:uP1}&=\ \bigcap_{P\in\Ass(I)}(u^k)\cap((u_P^k)(I(P))^k).
		\end{align}
		
		Notice that for any $P\in\Ass(I)$, we can write $u^k=u_P^kv$ with $v=(\prod_{x_i\notin P}x_i^{ka_i})$. Therefore, since $(I(P))^k$ is a monomial ideal whose support is contained in $\supp(P)$ and since $\supp(v)\cap\supp(P)=\emptyset$, it follows that
		\begin{equation}\label{eq:uP2}
			\begin{aligned}
				(u^k)\cap((u_P^k)(I(P))^k) \ &=\ (u_P^kv)\cap((u_P)^k(I(P))^k)\\ &=\ (v)[(u_P)^k\cap((u_P)^k(I(P))^k)]\\
				&=\ (v)(u_P)^k(I(P))^k\ =\ (u)^k(I(P))^k.
			\end{aligned}
		\end{equation}
		Combining (\ref{eq:uP1}) and (\ref{eq:uP2}) we obtain that
		$$
		(uI)^{(k)}=\bigcap_{P\in\Ass(I)}(u^k)(I(P))^k=(u^k)(\bigcap_{P\in\Ass(I)}(I(P))^k)=(u)^kI^{(k)}.
		$$
		
		Suppose now $\gcd(I)\ne 1$. Then $I=vJ$ for some monomial $v\in S$ and a monomial ideal $J\subset S$ with $\gcd(J)=1$. By what we proved before, we have $I^{(k)}=(v)^kJ^{(k)}$ and $(uI)^{(k)}=((uv)J)^{(k)}=(uv)^kJ^{(k)}$. Hence $(uI)^{(k)}=(u)^k(v)^kJ^{(k)}=(u)^kI^{(k)}$.
	\end{proof}
	
	The following elementary result is an immediate consequence of Proposition \ref{Prop:uI}.
	\begin{Corollary}\label{Cor:uI-cl-reg}
		Let $I\subset S$ be a monomial ideal and let $u\in S$ be a monomial. Suppose that $\reg\,I^{(k)}=\reg\,I^k$ and that $I^{(k)}$ has linear quotients for all $k\ge1$. Then $\reg\,(uI)^{(k)}=\reg\,(uI)^k$ and $(uI)^{(k)}$ has linear quotients for all $k\ge1$.
	\end{Corollary}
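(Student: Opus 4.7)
The proof will be a direct consequence of Proposition \ref{Prop:uI}, together with the well-known fact that multiplication by a single monomial shifts a minimal free resolution and preserves the linear quotients property. The plan has two essentially independent parts.

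First I would handle the regularity equality. By Proposition \ref{Prop:uI} we have $(uI)^{(k)} = (u)^k I^{(k)}$, and the proposition also asserts the shift formula $\reg\,(uI)^{(k)} = \reg\,I^{(k)} + \deg(u)k$. On the other side, $(uI)^k = (u^k) I^k$ and multiplying a graded ideal by a single monomial of degree $\deg(u)k$ shifts every Betti number by $\deg(u)k$, so $\reg\,(uI)^k = \reg\,I^k + \deg(u)k$. Combining these two identities with the hypothesis $\reg\,I^{(k)} = \reg\,I^k$ yields $\reg\,(uI)^{(k)} = \reg\,(uI)^k$.

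Next I would handle the linear quotients claim. Let $v_1, \dots, v_m$ be an ordering of $\mathcal{G}(I^{(k)})$ witnessing linear quotients. Since $\gcd(u^k, v_i) = u^k \cdot \gcd(1, v_i/1)$ in the obvious sense, we have $\mathcal{G}(u^k I^{(k)}) = \{u^k v_1, \dots, u^k v_m\}$, and for the colon ideals
\begin{equation*}
(u^k v_1, \dots, u^k v_{i-1}) : (u^k v_i) \;=\; (u^k v_1 : u^k v_i, \dots, u^k v_{i-1} : u^k v_i) \;=\; (v_1 : v_i, \dots, v_{i-1} : v_i),
\end{equation*}
where the last equality follows from $u^k v_j : u^k v_i = v_j / \gcd(v_i, v_j) = v_j : v_i$. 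Since the right-hand side is generated by variables by assumption, the same chosen ordering witnesses linear quotients for $u^k I^{(k)} = (uI)^{(k)}$.

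There is really no obstacle to this proof: the heavy lifting is already done in Proposition \ref{Prop:uI}, which reduces the symbolic power of $uI$ to a monomial rescaling of the symbolic power of $I$, and both regularity and linear quotients are manifestly preserved under such rescalings. The only thing worth writing carefully is the colon computation above, to make it transparent that the shift by $u^k$ does not alter the generators of the colon ideals.
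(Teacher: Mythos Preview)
Your proof is correct and follows exactly the approach the paper intends: the paper states the corollary as ``an immediate consequence of Proposition~\ref{Prop:uI}'' without further argument, and your two-part justification (regularity shift plus preservation of colon ideals under multiplication by $u^k$) is precisely the routine verification that makes this immediate. The only cosmetic issue is the awkward sentence about $\gcd(u^k,v_i)$; you can simply assert that $\mathcal{G}(u^kI^{(k)})=\{u^kv_1,\dots,u^kv_m\}$, since multiplication by a fixed monomial is injective on monomials and hence preserves minimal generating sets.
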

	
	A polymatroidal ideal $I$ satisfies the \textit{strong exchange property} if for all $u,v\in\mathcal{G}(I)$, all $i,j$ with $\deg_{x_i}(u)>\deg_{x_i}(v)$ and $\deg_{x_j}(u)<\deg_{x_j}(v)$ we have $x_j(u/x_i)\in\mathcal{G}(I)$.
	
	Given an integer $d>0$ and a vector ${\bf a}=(a_1,\dots,a_n)\in\ZZ_{\ge0}^n$, the ideal of \textit{Veronese type} $(n,d,{\bf a})$ is defined as the ideal $I_{n,d,{\bf a}}$ such that
	$$
	\mathcal{G}(I_{n,d,{\bf a}})\ =\ \big\{x_1^{b_1}\cdots x_n^{b_n}\ :\ \sum_{i=1}^{n}b_i=d\ \textup{and}\ b_i\le a_i\ \textup{for}\ i=1,\dots,n\big\}.
	$$
	
	It is known that a polymatroidal ideal $I$ satisfies the strong exchange property if and only if $I=(u)I_{n,d,{\bf a}}$ for some monomial $u\in S$, see \cite[Theorem 1.1]{HHV2005}.
	
	\begin{Proposition}\label{Prop:three}
		Let $I\subset S$ be a polymatroidal ideal in at most three variables. Then, $\reg\,I^{(k)}=\reg\,I^k$ and $I^{(k)}$ has linear quotients, for all $k\ge1$.
	\end{Proposition}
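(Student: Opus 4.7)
The strategy is to reduce the claim to cases where either $I^{(k)} = I^k$ holds (so the polymatroidality of $I^k$ yields both conclusions), or one of our earlier results---Proposition \ref{Prop:I(G)-reg} or Theorem \ref{Thm:k(k)}(a)---applies directly. Writing $I = \gcd(I) \cdot J$ with $\gcd(J) = 1$, Corollary \ref{Cor:uI-cl-reg} lets us replace $I$ by $J$; after passing to the polynomial subring on $\supp(J)$, we may further assume $J$ is fully supported in $K[x_1, \dots, x_m]$ with $m \le 3$. If $m \le 2$, the hypotheses force $J$ to be a power of a monomial prime ideal, hence $J^{(k)} = J^k$ for all $k$. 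For $m = 3$: if $\alpha(J) = 1$, $J$ itself is prime; if $\alpha(J) = 2$ and $J$ is squarefree, then $J$ is the edge ideal of the complete graph $K_3$ and Proposition \ref{Prop:I(G)-reg} applies; if $\alpha(J) = 2$ and $J$ is not squarefree, Theorem \ref{Thm:k(k)}(a) yields $J^{(k)} = J^k$.

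The remaining case is $m = 3$ with $d = \alpha(J) \ge 3$, and its resolution rests on the following classification: \emph{any polymatroidal ideal in three variables with trivial gcd is of Veronese type}. Precisely, setting $a_i = \max\{\deg_{x_i}(w) : w \in \mathcal{G}(J)\}$, we claim $J = I_{3, d, \mathbf{a}}$. To prove this, consider the rank function $\rho$ of the underlying discrete polymatroid: one has $\rho(\{i\}) = a_i$, and $\rho(\{i, j\}) = d - \min\{\deg_{x_k}(w) : w \in \mathcal{G}(J)\}$ for $\{i, j, k\} = \{1, 2, 3\}$. The hypothesis $\gcd(J) = 1$ forces this minimum to vanish, so $\rho(\{i, j\}) = d$; but then the pairwise constraint $b_i + b_j \le d$ is automatically implied by $b_1 + b_2 + b_3 = d$ together with $b_k \ge 0$. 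The polymatroid polytope therefore collapses to $\{\mathbf{b} \in \mathbb{R}_{\ge 0}^3 : b_\ell \le a_\ell,\ \sum_\ell b_\ell = d\}$, which is exactly the polytope defining $I_{3, d, \mathbf{a}}$.

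With $J = I_{3, d, \mathbf{a}}$ in hand, a short counting argument shows $\mathfrak{m} \in \Ass(J)$, whereupon Lemma \ref{Lem:el} finishes the proof. From $\gcd(J) = 1$, for each index $k$ there is a generator of $J$ supported on $\{x_i, x_j\}$, giving $a_i + a_j \ge d$; summing over the three pairs yields $a_1 + a_2 + a_3 \ge \lceil 3d/2 \rceil \ge d + 2$ when $d \ge 3$. One can therefore choose nonnegative integers $b_\ell \le a_\ell - 1$ with $\sum_\ell b_\ell = d - 1$. The monomial $v = x_1^{b_1} x_2^{b_2} x_3^{b_3}$ lies outside $J$ because $\deg v < d$, while each $x_\ell v$ has exponent vector componentwise at most $\mathbf{a}$, and hence belongs to $\mathcal{G}(J)$. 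Consequently $J : v = \mathfrak{m}$, so $\mathfrak{m} \in \Ass(J)$ and Lemma \ref{Lem:el} gives $J^{(k)} = J^k$ for every $k$. The main obstacle is the Veronese-type classification above; once in place, the embedded-prime argument and all remaining reductions are routine.
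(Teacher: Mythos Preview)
Your argument is correct and follows the same essential strategy as the paper's proof: reduce to a Veronese-type ideal $I_{3,d,\mathbf{a}}$, then invoke Lemma~\ref{Lem:el} once $\mathfrak{m}\in\Ass(J)$ is established, with the borderline situation handled via Proposition~\ref{Prop:I(G)-reg}. The organization differs in two respects worth noting. First, you remove $\gcd(I)$ at the outset via Corollary~\ref{Cor:uI-cl-reg}, whereas the paper applies this reduction only after invoking the strong exchange property; your ordering has the pleasant effect that the case $a_1+a_2+a_3=d+1$ (which the paper treats separately, factoring out $x_1^{a_1-1}x_2^{a_2-1}x_3^{a_3-1}$ to reach $I(K_3)$) is absorbed into your $d=2$ case and never appears for $d\ge3$. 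Second, your proof is more self-contained: the paper cites \cite[Proposition~2.7]{BH2013} for the Veronese-type classification, \cite[Corollary~4.5]{HRV} for the criterion $\mathfrak{m}\in\Ass(I_{3,d,\mathbf{a}})\Leftrightarrow\sum a_i\ge d+2$, and \cite[Proposition~5.1(c)]{FS2} for the two-variable case, while you supply short direct arguments for all three (the rank-function computation, the inequality $\sum a_i\ge\lceil 3d/2\rceil\ge d+2$, and the observation that a two-variable polymatroidal ideal with trivial gcd is $(x_1,x_2)^d$). Both routes are equally valid; yours trades a few citations for a couple of extra lines of elementary reasoning.
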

	\begin{proof}
		If $|\supp(I)|=1$, then $I$ is a principal ideal and there is nothing to prove. Let $|\supp(I)|=2$. Again, we may suppose that $I$ is not a principal ideal. It follows from \cite[Proposition 5.1(c)]{FS2} that $\m=(x_1,x_2)\in\Ass(I)$. Then the assertion follows from Lemma \ref{Lem:el}. Now suppose that $\supp(I)=[3]$. By \cite[Proposition 2.7]{BH2013}, $I$ satisfies the strong exchange property. Hence, $I=uI_{3,d,{\bf a}}$ with $u\in S$ a monomial and ${\bf a}=(a_1,a_2,a_3)\in\ZZ_{\ge0}^d$ such that $a_1+a_2+a_3\ge d$. Using Corollary \ref{Cor:uI-cl-reg} we may assume that $u=1$. Hence, we can assume that $I=I_{3,d,{\bf a}}$. Since $I$ is fully-supported we have $a_i>0$ for $i=1,2,3$. If $a_1+a_2+a_3=d$, then $I$ is principal and there is nothing to prove. Suppose now $a_1+a_2+a_3\ge d+1$. By \cite[Corollary 4.5]{HRV} we have that $\m=(x_1,x_2,x_3)\in\Ass(I)$ if and only if $a_1+a_2+a_3\ge d+2$. Hence, if $a_1+a_2+a_3\ge d+2$, then Lemma \ref{Lem:el} implies that $I^{(k)}=I^k$ for all $k\ge1$, and there is nothing to prove. Now, suppose that $a_1+a_2+a_3=d+1$. Then,
		$$
		I=(x_1^{a_1}x_2^{a_2}x_3^{a_3-1},x_1^{a_1}x_2^{a_2-1}x_3^{a_3},x_1^{a_1-1}x_2^{a_2}x_3^{a_3})=x_1^{a_1-1}x_2^{a_2-1}x_3^{a_3-1}(x_1x_2,x_1x_3,x_2x_3).
		$$
		Notice that $(x_1x_2,x_1x_3,x_2x_3)$ is the edge ideal of a complete graph on three vertices. Hence, the assertion follows by combining Proposition \ref{Prop:I(G)-reg} with Corollary \ref{Cor:uI-cl-reg}.
	\end{proof}
	
	Next, we consider matroidal ideals generated in small degrees. We are able to establish Conjectures \ref{ConjA} and \ref{ConjB} for all matroidal ideals in at most four variables.

	\begin{Proposition}\label{Prop:four}
		Let $I\subset S$ be a matroidal ideal in at most four variables. Then, $\reg\,I^{(k)}=\reg\,I^k$ and $I^{(k)}$ has linear quotients, for all $k\ge1$.
	\end{Proposition}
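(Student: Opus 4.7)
The plan is to reduce any matroidal ideal $I\subset S$ in at most four variables to cases that have already been settled earlier in the paper. I would begin by assuming without loss of generality that $I$ is fully-supported, so $|\supp(I)|\le 4$. If $|\supp(I)|\le 3$, then $I$ is a polymatroidal ideal in at most three variables, and Proposition~\ref{Prop:three} yields the conclusion immediately. Hence the substantive content of the result lies in the case $|\supp(I)|=4$.

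Assuming $|\supp(I)|=4$, let $d=\alpha(I)=\omega(I)$ be the common degree of the minimal generators; since $I$ is matroidal, $1\le d\le 4$. The extreme degrees are trivial or already handled: for $d=1$ we have $I=(x_1,x_2,x_3,x_4)$, a prime ideal, so $I^{(k)}=I^k$; for $d=2$ the result is Proposition~\ref{Prop:I(G)-reg}; and for $d=4$ the only possibility is the principal ideal $(x_1x_2x_3x_4)$. The only remaining case is $d=3$.

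For $d=3$ I would exploit the fact that the bases of the matroid associated to $I$ are $3$-subsets of $[4]$, and there are only four such subsets in total, one for each omitted element. If every $3$-subset is a basis, then $I=I_{4,3}$ and Corollary~\ref{Cor:Veronese} applies. Otherwise, there exists $i\in[4]$ such that $[4]\setminus\{i\}$ is not a basis; this forces $i$ to belong to every basis (i.e., $i$ is a coloop), and one can factor $I=x_i I'$ where $I'$ is a matroidal ideal of degree $2$ supported on $[4]\setminus\{i\}$. Proposition~\ref{Prop:three} gives the desired conclusions for $I'$, and Corollary~\ref{Cor:uI-cl-reg} propagates both the linear-quotients property and the equality $\reg\,I^{(k)}=\reg\,I^k$ back to $I=x_iI'$.

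I do not expect any genuine obstacle here: the argument is a short case analysis that leans on earlier results once the reduction is organized. The only elementary input is the coloop observation in the last subcase, which is immediate because each element of $[4]$ is omitted by exactly one $3$-subset, so the failure of some $3$-subset to be a basis is equivalent to the corresponding element being a coloop. Apart from that, the proof is pure bookkeeping.
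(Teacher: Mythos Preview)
Your proof is correct; the case analysis is organized slightly differently from the paper's, and your $d=3$ step is in fact more economical. The paper handles $d=3$ by invoking Lemma~\ref{Lem:againP} to split $I=x_4I_1+I_2$ and then enumerates the possible matroidal edge ideals $I_1$ in three variables; this leads to three explicit subcases that are dispatched using, respectively, Theorem~\ref{Thm:k(k)}(b) (transversal polymatroidal ideals), Theorem~\ref{Thm:mat-mat} (matching-matroidal ideals of Veronese type), and Corollary~\ref{Cor:Veronese} ($I_{4,3}$). You instead observe that in a rank-$3$ matroid on $[4]$ each element is omitted by a unique $3$-subset, so either $I=I_{4,3}$ or some element is a coloop and $I=x_iI'$ with $I'$ matroidal in three variables; this reduces everything to Corollary~\ref{Cor:Veronese}, Proposition~\ref{Prop:three}, and Corollary~\ref{Cor:uI-cl-reg}, avoiding any appeal to Sections~\ref{sec4} and~\ref{sec5}. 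The paper's route has the virtue of illustrating the machinery developed earlier, while yours is shorter and relies only on the elementary (and immediate) fact that factoring a common variable out of a matroidal ideal yields a matroidal ideal.
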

	\begin{proof}
		Let $I\subset S$ be a matroidal ideal in a most four variables. Then $\alpha(I)\le4$. Notice that $\alpha(I)=1$ if and only if $I$ is a monomial prime ideal and $\alpha(I)=|\supp(I)|$ if and only if $I=(\prod_{i\in\supp(I)}x_i)$. In these cases the assertion holds. If $\alpha(I)=2$, then $I$ is a matroidal edge ideal and the assertion follows from Proposition \ref{Prop:I(G)-reg}. Therefore if $|\supp(I)|\le3$ there is nothing to prove.
		
		Suppose now $\supp(I)=[4]$. We only have to consider the case $\alpha(I)=3$. Using Lemma \ref{Lem:againP} we can write $I=x_4I_1+I_2$, with $4\notin\supp(I_2)$, $I_2\subset I_1$ and $I_1,I_2$ are polymatroidal. If $I_2=(0)$, then $I$ satisfies the assertion by using Corollary \ref{Cor:uI-cl-reg} because $I_1$ satisfies the assertion by our discussion. Suppose $I_2\ne(0)$. Since $\supp(I_2)\subseteq[3]$ and $\alpha(I_2)=3$ we have $I_2=(x_1x_2x_3)$. Notice that $\alpha(I_1)=2$. Therefore, by using \cite[Theorem 2.3]{KNQ}, $I_1$ is the edge ideal of a complete multipartite graph on two or on three vertices. Hence, up to a relabeling, we have the next three possible cases: (i) $I_1=(x_1x_2)$, (ii) $I_1=(x_1x_2,x_1x_3)$ or (iii) $I_1=(x_1x_2,x_1x_3,x_2x_3)$. Therefore, we have the following cases:
		\begin{enumerate}
			\item[(i)] $I=x_4(x_1x_2)+(x_1x_2x_3)=(x_1)(x_2)(x_3,x_4)$ is a transversal polymatroidal ideal and it satisfies the assertion by Theorem \ref{Thm:k(k)}(c).
			\item[(ii)] $I=x_4(x_1x_2,x_1x_3)+(x_1x_2x_3)=I_{\{1\},1}I_{\{2,3,4\},2}$ is a matching-matroidal ideal of Veronese type and it satisfies the assertion by Theorem \ref{Thm:mat-mat}.
			\item[(iii)] $I=x_4(x_1x_2,x_1x_3,x_2x_3)+(x_1x_2x_3)=I_{4,3}$ is a squarefree Veronese ideal and it satisfies the assertion by Corollary \ref{Cor:Veronese}.
		\end{enumerate}
		The proof is complete.
	\end{proof}
	
	As the final conclusion of the paper, in the next result we summarize the families of polymatroidal ideals for which we know that Conjectures \ref{ConjA} and \ref{ConjB} hold.
	\begin{Theorem}
		Conjectures \ref{ConjA} and \ref{ConjB} hold for the following families of polymatroidal ideals.
		\begin{enumerate}
			\item[(a)] Polymatroidal ideals generated in degree two.
			\item[(b)] Polymatroidal ideals in at most three variables.
			\item[(c)] Matroidal ideals in at most four variables. 
			\item[(d)] Transversal polymatroidal ideals.
			\item[(e)] Squarefree Veronese ideals.
			\item[(f)] Matching-matroidal ideals of Veronese type.
			\item[(g)] Principal Borel ideals.
		\end{enumerate}
		Furthermore, Conjectures \ref{ConjA} and \ref{ConjB} hold for any product of polymatroidal ideals, having pairwise disjoint supports, belonging to the families \textup{(a)-(g)}.  
	\end{Theorem}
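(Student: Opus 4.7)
The plan is to recognize this as a synthesis theorem: parts~(a)--(g) are each covered by results already established in the preceding sections, and the furthermore statement follows by combining Remark~\ref{Rem} with Lemma~\ref{Lem:I-dis}. Family by family I would invoke the corresponding earlier result: part~(b) is Proposition~\ref{Prop:three}, part~(c) is Proposition~\ref{Prop:four}, parts~(d) and~(g) are Theorem~\ref{Thm:k(k)}(b) and~(c), part~(e) is Corollary~\ref{Cor:Veronese}, and part~(f) is Theorem~\ref{Thm:mat-mat}. Part~(a) needs a brief case split: in the matroidal subcase, Proposition~\ref{Prop:I(G)-reg} yields that $I^{(k)}$ is componentwise polymatroidal (hence componentwise linear) with $\reg\,I^{(k)}=\reg\,I^k=2k$; in the non-squarefree subcase, Theorem~\ref{Thm:k(k)}(a) gives $I^{(k)}=I^k$, and both conjectures follow since $I^k$ is polymatroidal.

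For the furthermore statement, let $I=I_1\cdots I_t$ with each $I_j$ polymatroidal from one of the families (a)--(g) and with $\supp(I_i)\cap\supp(I_j)=\emptyset$ for $i\ne j$. The disjointness of supports together with Remark~\ref{Rem} gives $I^{(k)}=I_1^{(k)}\cdots I_t^{(k)}$, while $I^k=I_1^k\cdots I_t^k$ holds trivially by commutativity of ideal multiplication. Each $I_j^{(k)}$ is componentwise linear by the first part, so Lemma~\ref{Lem:I-dis}(a) immediately delivers Conjecture~\ref{ConjB} for $I$. For Conjecture~\ref{ConjA}, polymatroidality of $I_j$ gives $\reg\,I_j^k=\alpha(I_j)k$, which combined with Conjecture~\ref{ConjA} for $I_j$ (already verified) yields $\reg\,I_j^{(k)}=\alpha(I_j)k$; this linearity in $k$ is exactly what is needed to invoke Lemma~\ref{Lem:I-dis}(c), producing
\[
\reg\,I^{(k)}\,=\,\sum_{j=1}^{t}\alpha(I_j)\,k\,=\,\alpha(I)\,k\,=\,\reg\,I^k,
\]
where the last equality uses $\alpha(I)=\sum_{j}\alpha(I_j)$ (a consequence of the disjoint supports) together with the polymatroidality of $I$ itself, being a product of polymatroidal ideals.

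No genuine obstacle arises here: the theorem is essentially a bookkeeping compilation, with every substantial ingredient already in place. The only delicate points worth flagging are the split of~(a) into its two subcases, and the observation that invoking Lemma~\ref{Lem:I-dis}(c) truly requires the linearity of $\reg\,I_j^{(k)}$ in $k$ (rather than merely the equality $\reg\,I_j^{(k)}=\reg\,I_j^k$); this linearity is automatic here since each $I_j$ is polymatroidal and Conjecture~\ref{ConjA} has been established for~$I_j$.
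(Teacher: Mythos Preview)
Your proposal is correct and matches the paper's approach: the paper states this theorem as a summary without an explicit proof, and your breakdown correctly identifies the earlier result backing each family (including the case split for~(a) via Proposition~\ref{Prop:I(G)-reg} and Theorem~\ref{Thm:k(k)}(a)). Your handling of the ``furthermore'' clause via Remark~\ref{Rem} and Lemma~\ref{Lem:I-dis}(a),(c), together with the observation that $\reg\,I_j^{(k)}=\alpha(I_j)k$ is genuinely linear in $k$, is exactly the intended argument.
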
\smallskip
	
	\noindent\textbf{Acknowledgment.}
	A. Ficarra was partly supported by INDAM (Istituto Nazionale di Alta Matematica), and also by the Grant JDC2023-051705-I funded by
	MICIU/AEI/10.13039/501100011033 and by the FSE+. S. Moradi is supported by the Alexander von Humboldt Foundation.

\end{document}